\newcommand{\eps}{\varepsilon}
\newcommand{\wt}[1]{\widetilde{#1}}
\newcommand{\ux}{\underline{x}}
\newcommand{\uL}{\underline{\texttt{L}}}
\newcommand{\tL}{\texttt{L}}
\newcommand{\GG}{\mathcal{G}}
\newcommand{\bG}{\boldsymbol{G}}
\newcommand{\bGG}{\boldsymbol{\mathcal{G}}}
\newcommand{\bE}{\boldsymbol{E}}
\newcommand{\ba}{\boldsymbol{a}}
\newcommand{\bp}{\boldsymbol{p}}
\newcommand{\bL}{\textbf{L}}
\newcommand{\buL}{\underline{\textbf{L}}}
\renewcommand{\P}{\mathbb{P}}
\newcommand{\R}{\mathbb{R}}
\newcommand{\E}{\mathbb{E}}
\newcommand{\one}{\mathds{1}}
\newcommand{\FF}{\mathscr{F}}
\newcommand{\de}{\mathrm{d}}
\newcommand{\bPhi}{\boldsymbol{\prescript{\star}{}{\Phi}}}
\newcommand{\lbd}{\textnormal{lbd}}
\newcommand{\ubd}{\textnormal{ubd}}
\DeclareMathOperator{\Var}{Var}
\theoremstyle{plain}
\newtheorem{thm}{Theorem}
\newtheorem{lemma}[thm]{Lemma}
\newtheorem{prop}[thm]{Proposition}
\theoremstyle{definition}
\newtheorem{defn}[thm]{Definition}
\newtheorem{remark}[thm]{Remark}
\theoremstyle{remark}
\numberwithin{equation}{section}
\numberwithin{thm}{section}
\title{Upper bounds on the $2$-colorability threshold of random $d$-regular $k$-uniform hypergraphs for $k\geq 3$}
\author{Evan Chang \thanks{High Technology High School. Email: \textup{\tt evchang@ctemc.org}}, Neel Kolhe  \thanks{Lynbrook High School. Email: \textup{\tt neel@kolhe.org}}, Youngtak Sohn \thanks{Department of Mathematics, Massachusetts Institute of Technology. Email: \textup{\tt youngtak@mit.edu}}}
 \date{July 13, 2023}
\begin{document}

\maketitle

\begin{abstract}
For a large class of random constraint satisfaction problems (\textsc{csp}), deep but non-rigorous theory from statistical physics predict the location of the sharp satisfiability transition. The works of Ding, Sly, Sun (2014, 2016) and Coja-Oghlan, Panagiotou (2014) established the satisfiability threshold for random regular $k$-\textsc{nae-sat}, random $k$-\textsc{sat}, and random regular $k$-\textsc{sat} for large enough $k\geq k_0$ where $k_0$ is a large non-explicit constant. Establishing the same for small values of $k\geq 3$ remains an important open problem in the study of random \textsc{csp}s.

In this work, we study two closely related models of random \textsc{csp}s, namely the $2$-coloring on random $d$-regular $k$-uniform hypergraphs and the random $d$-regular $k$-\textsc{nae-sat} model. For every $k\geq 3$, we prove that there is an explicit $d_\star(k)$ which gives a satisfiability upper bound for both of the models. Our upper bound $d_\star(k)$ for $k\geq 3$ matches the prediction from statistical physics for the hypergraph $2$-coloring by Dall’Asta, Ramezanpour, Zecchina (2008), thus conjectured to be sharp. Moreover, $d_\star(k)$ coincides with the satisfiability threshold of random regular $k$-\textsc{nae-sat} for large enough $k\geq k_0$ by Ding, Sly, Sun (2014). 

\end{abstract}
\section{Introduction}
In this work, we study the $2$-coloring on random $d$-regular $k$-uniform hypergraphs and the random $d$-regular $k$-\textsc{nae-sat} model for $k\geq 3$. We establish an explicit well-defined upper bound on the satisfiability/colorability threshold that holds for every $k\geq 3$, which is conjectured to be sharp in statistical physics \cite{Dall_Asta_2008} for hypergraph $2$-coloring, and matches the previous rigorous results for random regular $k$-\textsc{nae-sat} model for $k$ large enough \cite{dss14stoc}.


Given a $k$-uniform hypergraph with $n$ nodes and $m$ hyperedges, where every edge consists of $k$ nodes, a hypergraph $2$-coloring is an assignment of colors from $\{\sf{red}, \sf{blue}\}\equiv \{0,1\}$ to the nodes such that there is no monochromatic hyperedge. If there is such a $2$-coloring, the hypergraph is said to be colorable or satisfiable. It is a typical example of a constraint satisfaction problem (\textsc{csp}) that has been studied extensively in combinatorics and computer science literature \cite{seymour74,Alon88,Achlioptas02,cz12, DYER15, Henning13, HENNING18}.

A $k$-\textsc{nae-sat} problem is another closely related \textsc{csp} studied in computer science \cite{cp12,dss14stoc, ssz16,nss22FOCS, SS23}, which can be viewed as a variant of the infamous $k$-\textsc{sat} problem \cite{Karp72}. A $k$-\textsc{sat} formula is a boolean \textsc{cnf} formula with $n$ variables formed by taking the \textsc{and} of $m$ clauses, which is the \textsc{or} of $k$ variables or their negations. Then, a \textsc{nae-sat} solution $\ux\in \{0,1\}^n$ is an assignment such that $\ux$ and its negation $\neg\ux$ evaluates $\sf{true}$ in the formula. Thus, viewing each clause as an hyperedge, if no variable is negated in every clauses, then a \textsc{nae-sat} solution is equivalent to a hypergraph $2$-coloring.  

A significant direction of research on satisfiability has involved examining the large-system limit of randomly generated problem instances. The study of random constraint satisfaction problems (r\textsc{csp}s) aims to discern typical behaviors and phase transitions in these systems as the number of variables $n$ and the number of constraints $m$ tends to infinity with a fixed ratio $\alpha \equiv \frac{m}{n}$. In this sparse regime, there has been considerable effort into identifying the satisfiability transition, or the critical density, denoted by $\alpha_{\sf sat}$, beyond which solutions cease to exist \cite{ap04, anp05, am06, cv13}.

Many of the sparse r\textsc{csp}s belong to a broad universality class called the one-step-replica-symmetry-breaking (1\textsc{rsb}) class from statistical physics \cite{kmrsz07} (see Chapter 19 of \cite{mm09} for a survey) - including $2$-coloring on random regular $k$-uniform hypergraphs, random regular $k$-\textsc{nae-sat}, and random $k$-\textsc{sat} for $k\geq 3$. The 1\textsc{rsb} class refers to r\textsc{csp} which is predicted to possess a single layer of hierarchy of well-separated clusters, where a cluster roughly refers to a dense region of the solution space. A shared characteristic of these problems is that in a non-trivial regime below $\alpha_{\sf sat}\equiv \alpha_{\sf sat}(k)$, the number of solutions fails to concentrate about its mean due to the clustering effect. This effect thus prevents standard first and second moment
methods from locating the exact transition, presenting a significant mathematical challenge.

Despite such difficulties, breakthroughs were made to successfully locate the satisfiability threshold of the random regular $k$-\textsc{nae-sat} \cite{dss16}, the random $k$-SAT \cite{DSS22}, and random regular $k$-SAT \cite{cp16} for large enough $k\geq k_0$, where $k_0$ is a non-explicit large absolute constant. These works carried out a demanding second
moment method to the number of clusters instead of the number of solutions based on intuitions from statistical physics \cite{mpz02} and previous mathematical works \cite{ap04, COGoing13, cp16}. See Section \ref{subsec:related} for further literature.  

However, for small values of $k\geq 3$, locating the satisfiability threshold for r\textsc{csp}s in the 1\textsc{rsb} class remains an important open problem. Indeed, for all the aforementioned models in 1\textsc{rsb} class, the physicists
conjecture an explicit value $\alpha_{\star}(k)$ for $\alpha_{\sf sat}(k)$, the 1\textsc{rsb} threshold, which is expected to be
correct for all $k\geq 3$ \cite{Mertens06,mpz02,Dall_Asta_2008}. The methods of \cite{dss16,DSS22,cp16} crucially uses the fact that $k$ is large enough for their second moment method to succeed.

In this work, we consider $2$-coloring on random $d$-regular $k$-uniform hypergraphs, where the random hypergraph is generated uniformly at random from the set of $k$-uniform hypergraphs such that every variable participates in exactly $d$ hyperedges. We also consider random $d$-regular \textsc{nae-sat}, where $k$-\textsc{sat} formula is generated uniformly at random with the condition that every variable participates in exactly $d$ clauses. We establish an upper bound $d_\star(k)$ on the satisfiability thresholds for these problems for every $k\geq 3$, which is sharp \cite{dss14stoc} for random regular $k$-\textsc{nae-sat} for large $k\geq k_0$ and conjectured to be sharp \cite{Dall_Asta_2008} for $k\geq 3$ for hypergraph $2$-coloring.

 \begin{thm}\label{thm:main}
 For $k\geq 3$ and $d_{\textnormal{lbd}}(k)\leq d\leq d_{\textnormal{ubd}}(k)$, where $d_{\textnormal{lbd}}(k), d_{\textnormal{ubd}}(k)$ are defined in \eqref{eq:interval:d} below, there exists a unique solution $x\equiv x(k,d)$ to the equation
 \begin{equation}\label{eq:BP}
 d = 1+\left( \log \frac{1-2x}{1-x} \right)/ \log \left( \frac{1-2x^{k-1}}{1-x^{k-1}} \right)\quad\textnormal{on the interval}\quad\frac{1}{2} - \frac{1}{2^k} \le x \le \frac{1}{2}.
 \end{equation}
Define $d_\star(k)$ by the largest zero of the explicit function
\begin{equation}\label{eq:def:Phi}
    \boldsymbol{\prescript{\star}{}{\Phi}}(d) := -\log(1-x) - d(1-k^{-1} - d^{-1}) \log (1-2x^k) + (d-1) \log(1-x^{k-1})\,,
\end{equation}
 where the existence of the root of $\boldsymbol{\prescript{\star}{}{\Phi}}(d)$ is guaranteed in the interval $[d_{\textnormal{lbd}}(k), d_{\textnormal{ubd}}(k)]$.
 
 Then, for $k \ge 3$, and $d>d_\star(k)$, the random $d$-regular $k$-uniform hypergraph is not $2$-colorable with probability tending to one as the graph size $n\to\infty$. Similarly for $k\geq 3$ and $d>d_\star(k)$, then the random $d$-regular $k$-\textsc{nae-sat} instance is not satisfiable with probability tending to one as $n\to\infty$.
 \end{thm}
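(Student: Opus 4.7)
The plan is to bound $\Pr[\text{SAT}]$ by a first moment, but instead of counting the number of 2-colorings (which fluctuates too wildly in the 1RSB regime to give the sharp threshold), I would count 1RSB \emph{clusters} of solutions via frozen configurations. A cluster $\mathcal{C}$ is encoded by a frozen configuration $\tau\in\{0,1,\star\}^V$, where $\tau_v\in\{0,1\}$ is the common value of $v$ across $\mathcal{C}$ if $v$ is frozen and $\tau_v=\star$ otherwise. A $\tau$ is \emph{valid} iff it arises from some cluster, which translates into the edge-local condition that each hyperedge's non-$\star$ labels satisfy NAE, together with a \emph{forcing} condition that every frozen $v$ has an incident edge in which flipping $v$ would violate NAE. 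Clusters correspond bijectively to valid $\tau$'s, so $\mathbf{1}\{\text{SAT}\}\le\#\{\text{valid }\tau\}$ and it suffices to show $\EE[\#\{\text{valid }\tau\}]=o(1)$.

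\textbf{First moment computation.} Using the configuration-model representation of the random $d$-regular $k$-uniform hypergraph, I would stratify valid $\tau$'s by their \emph{type} --- the joint empirical distribution of local vertex and incident-edge labels --- and for each type apply Stirling's formula to obtain an expected count of the form $\exp(n\Psi(\text{type})+o(n))$ with an explicit entropy-energy functional $\Psi$. The $0\leftrightarrow 1$ symmetry and Lagrange multipliers collapse the maximization of $\Psi$ to a one-parameter family indexed by the probability $x$ that a given half-edge carries a frozen (non-$\star$) endpoint, and the stationarity condition becomes exactly the BP fixed-point equation \eqref{eq:BP}. Evaluating $\Psi$ at the stationary $x\in[1/2-1/2^k,1/2]$ returns the function $\bPhi(d)$ of \eqref{eq:def:Phi}, giving $\EE[\#\{\text{valid }\tau\}]=e^{n\bPhi(d)+o(n)}$ after summing over the polynomially many types. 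By the definition of $d_\star(k)$ as the largest root of $\bPhi$, one has $\bPhi(d)<0$ for $d>d_\star(k)$ in the admissible range, and Markov's inequality yields $\Pr[\text{SAT}]=o(1)$. The NAE-SAT case is handled uniformly by identifying random $d$-regular $k$-NAE-SAT with $d$-regular $k$-uniform hypergraph 2-coloring tensored by an independent Rademacher sign pattern on the half-edges; the sign pattern contributes a factor that is absorbed into the type-class counting and does not alter the optimal exponent.

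\textbf{Main obstacle.} The hardest step is verifying that the stationary $x\in[1/2-1/2^k,1/2]$ produced by \eqref{eq:BP} is the \emph{global} maximizer of $\Psi$ and not merely one of several critical points: competitors include the all-$\star$ (trivially unfrozen) type, the all-frozen boundary type, and intermediate types where the forcing constraint degenerates, each of which must be ruled out by direct $\Psi$-value comparison together with monotonicity/convexity properties of the BP operator on the interval $[d_\lbd(k),d_\ubd(k)]$. A secondary obstacle is that the forcing requirement is not a priori edge-local --- it couples a variable to all of its incident edges --- and must be recast, via inclusion-exclusion on the incidences or a cavity-style reformulation, into an edge-factorized form so that the type-class expectation really factors into vertex and edge contributions. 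The explicit choice of interval $[d_\lbd(k),d_\ubd(k)]$ in the statement is presumably dictated precisely by what is needed to make both the uniqueness of the BP fixed point and the global maximization of $\Psi$ go through; checking these amounts to an explicit but delicate analytic calculation.
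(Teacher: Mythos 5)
Your route is genuinely different from the paper's: you propose the Ding--Sly--Sun style first moment over frozen configurations (clusters), whereas the paper never computes a cluster moment at all. It instead applies the Franz--Leone/Guerra/Panchenko--Talagrand interpolation bound (Theorem E.3 of \cite{sly_sun_zhang_2021}) to a positive-temperature model, plugs in an explicit 1\textsc{rsb} trial measure $\zeta_{k,d,\beta}$ built from the \textsc{bp} fixed point $x(k,d)$, shows $\mathcal{P}(\zeta_{k,d,\beta},\beta^{-1/2})\le C+\beta^{1/2}\bPhi(d)$, and combines this with concentration of the free energy (Azuma on the clause-revealing martingale) to kill all solutions when $\bPhi(d)<0$. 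The decisive advantage of that route is that the interpolation inequality holds for \emph{every} trial measure, so one only has to evaluate the functional at a candidate, never to solve a global optimization.

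This is exactly where your proposal has a genuine gap rather than just a harder path. The step you flag as the ``main obstacle'' --- proving that the global maximizer of the type functional $\Psi$ is the \textsc{bp} stationary point, so that $\EE[\#\{\text{valid }\tau\}]=e^{n\bPhi(d)+o(n)}$ --- is precisely the part that is only known for $k\ge k_0$ large: in \cite{dss14stoc,dss16} the frozen-model moment computation relies on large-$k$ asymptotics to rule out competing critical points, boundary types, and degenerate forcing regimes, and no method is currently available to carry this out for $k=3,4$ (which is the whole point of the present paper). Without resolving it, your argument only yields $\EE[\#\{\text{valid }\tau\}]\le e^{n\max_{\text{types}}\Psi+o(n)}$ with an uncontrolled exponent, and the claimed identification with $\bPhi(d)$ is unjustified. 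Two further points need repair: (i) for the pure coloring model (literals identically zero) the expected count of valid frozen configurations does not factor as cleanly as you assert --- the paper needs an exponential-tilting/local CLT argument (Lemma \ref{lem:firstmo:coloring}) already for the plain solution count, and an analogous lattice/parity analysis would be needed in your type-class computation; the Rademacher-sign remark does not dispose of this. (ii) The theorem also asserts well-definedness of $d_\star(k)$, i.e.\ uniqueness of the \textsc{bp} fixed point on $[1/2-2^{-k},1/2]$ and existence of a root of $\bPhi$ in $[d_\lbd(k),d_\ubd(k)]$; this occupies Sections \ref{sec:3}--\ref{sec:4} of the paper (contraction of $\dot\Psi\circ\hat\Psi$, plus endpoint sign estimates, with separate numerics for $k=3$), and your proposal defers it entirely. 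Finally, ``clusters correspond bijectively to valid $\tau$'s'' is not true and not needed; what you need (and should state) is that every solution coarsens to some valid frozen configuration, which gives the one-sided bound $\mathbf{1}\{\mathrm{SAT}\}\le\#\{\text{valid }\tau\}$, and separately a first-moment treatment of $d>d_\ubd(k)$ as in the paper's proof of Theorem \ref{thm:main}.
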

A matching lower bound was obtained in \cite{dss14stoc} for large enough $k\geq k_0$ in random $d$-regular \textsc{nae-sat} by a demanding second moment method. Our proof is based on an interpolation method from statistical physics \cite{Franz03,guerra03,PT04}. We give a proof outline in Section \ref{subsec:proof}.

We emphasize that for any $k\geq 3$, determining the colorability threshold for $2$-coloring on random $d$-regular $k$-uniform hyprgraphs was previously open, thus Theorem \ref{thm:main} for $2$-coloring is novel even for large $k$. Although it is expected that the colorability threshold for the model matches the satisfiability threshold for random regular $k$-\textsc{nae-sat}, it is highly non-trivial to modify the proof techniques for random regular \textsc{nae-sat} \cite{dss16} to the $2$-coloring model since many of the arguments in \cite{dss16} crucially takes advantage of the randomness of clauses. For example, any $\ux \in \{0,1\}^n$ has the same probability of being a \textsc{nae-sat} solution by the randomness of the clauses while this is obviously not true for the $2$-coloring model. As we see below, even the calculation of the first moment of the solutions is substantially more involved for the $2$-coloring model. Let $Z_{\textsc{nae}}$ be the number of solutions of random $d$-regular $k$-\textsc{nae-sat}, then it is trivial to calculate $\E Z_{\textsc{nae}}$ exactly by taking advantage of the randomness of the clauses:
\begin{equation}\label{eq:first:mo}
\E Z_{\textsc{nae}} = 2^n(1-2^{-k+1})^m= \exp\Big(n\Big(\log 2+\alpha\log\big(1-2^{-k+1}\big)\Big)\Big)=:\exp\big(n\Phi_k(\alpha)\big)\,.
\end{equation}
On the other hand, if we denote $Z_{\textsc{col}}$ by the number of $2$-colorings on random $d$-regular $k$-uniform graphs, then estimating $\E Z_{\textsc{col}}$ is more delicate: we appeal to the idea of exponential tilting from large deviations theory \cite{DZ10} and local central limit theorem \cite{Borokov17} to prove that $\E Z_{\textsc{col}}$ is of the same order as $\exp\big(n\Phi_k(\alpha)\big)$ in Lemma \ref{lem:firstmo:coloring} below. Using the interpolation bound which is simpler than moment calculations, we clarify a simple mechanism (cf. Lemma \ref{lem:symmetry}) behind the identical satisfiability upper bounds for both models.

The solution $x(k,d)$ to the equation \eqref{eq:BP} has a mathematical interpretation. Namely, $2x(k,d)$ is the fraction of the so-called \textit{frozen} variables in the \textit{cluster} model. The solution $x(k,d)$ is called the \textit{Belief Propagation}(\textsc{bp}) fixed point for the cluster model in statistical physics. We emphasize that addressing the uniqueness of the \textsc{bp} fixed point is a well-known major obstacle for many combinatorial optimization and statistical inference problems that exhibit sharp phase transitions (e.g. for spherical perceptron model \cite{shcherbina2003rigorous}; see \cite[Chapter 3]{TalagrandVolI} for a further discussion). We establish the uniqueness of the \textsc{bp} fixed point by showing that the \textit{Belief Propagation recursion} (cf. \eqref{eq:def:var:clause:BP}) is a contraction for $k\geq 3$ and $[d_{\textnormal{lbd}}(k), d_{\textnormal{ubd}}(k)]$, which might be also useful in obtaining a matching lower bound to Theorem \ref{thm:main}.

 \begin{table}[t]
    \begin{center}
        \begin{tabular}{| c | c | c | c | c | c | c | c | c | c | c | c | c | c |}
        \hline
              k & 3 & 4 & 5 & 6 & 7 & 8 & 9 & 10 & 11 & 12 & 13 & 14 & 15   \\
              \hline
            $\lceil d_{\star}(k) \rceil$ & 7 & 20 & 53 & 130 & 307 & 705 & 1592 & 3543 & 7802 & 17028 & 36902 & 79488 & 170340 \\
            \hline 
            $\lceil d_1(k) \rceil$  & 8  & 21 & 54 & 131 & 309 & 708  & 1594 & 3546 & 7804 & 17031 & 36905 & 79491 & 170343\\
            \hline
        \end{tabular}
        \caption{\label{table} A comparison with the upper bound $d_\star(k)$ in Theorem \ref{thm:main} with the first moment threshold $d_1(k):=\frac{k\log 2}{-\log \left(1-2^{-k+1}\right)}$ for small values of $k$. For $3\leq k \leq 10$, the values also appear in Table 1 of \cite{Dall_Asta_2008}.}
    \end{center}
    \end{table}
    
Since $\E Z_{\textsc{nae}}$ and $\E Z_{\textsc{col}}$ are given by $\exp\big(n\Phi_k(\alpha)\big)$ up to a constant (cf. \eqref{eq:first:mo} and Lemma \ref{lem:firstmo:coloring}), the first moment thresholds for both of the models are given by $d_1(k):=\frac{k\log 2}{-\log \left(1-2^{-k+1}\right)}$. In Table \ref{table}, we report $\lceil d_\star(k)\rceil$ and $\lceil d_1(k)\rceil$ for $3\leq k \leq 15$. For every $3\leq k \leq 15$, the upper bound $\lceil d_\star(k) \rceil$ in Theorem \ref{thm:main} improves over the first moment threshold. For large values of $k$, $d_\star(k)$ improves over $d_1(k)$ by $\Omega(k)$ (see \eqref{eq:large:k} below). The quantities $d_{\textnormal{lbd}}(k), $ and $d_{\textnormal{ubd}}(k)$ are defined by
\begin{equation}\label{eq:interval:d}
d_{\textnormal{lbd}}(k)=
\begin{cases}
6.74  & k=3\,,\\
16.7   &k=4\,,\\
(2^{k-1}-2)k \log 2 &k\geq 5\,.
\end{cases}
\quad\quad\quad d_{\textnormal{ubd}}(k)
=
\begin{cases}
7.5  & k=3\,,\\
2^{k-1}k\log 2   &k\ge 4\,.
\end{cases}
\end{equation}

    \begin{remark}
For $d\leq d_{\textnormal{lbd}}(k)$ and large $k\geq k_0$, the second moment method applied to $Z_{\textsc{nae}}$ succeeds in showing the satisfiability for the random $d$-regular $k$-\textsc{nae-sat} model (see \cite[Section 2.1]{dss14stoc}). For $k\in \{3,4\}$, $d_{\textnormal{lbd}}(k)$ must be adjusted to be higher to guarantee that $\boldsymbol{\prescript{\star}{}{\Phi}}(d)$ is well-defined, i.e. there exists a unique solution to \eqref{eq:BP}. The value $d_{\textnormal{ubd}}(k)\equiv 2^{k-1}k\log 2>d_1(k)$ for $ k\geq 4$ is a convenient upper bound for satisfiability. For $k=3$, we take $d_{\textnormal{ubd}}(3)$ to be $7.5>\frac{3\log 2}{-\log(3/4)}=d_1(3)$, which does not change $d_\star(3)$, but is more convenient for the proof.
\end{remark}

Finally, we note that the large $k$ asymptotics of $d_\star(k)$ was proven in \cite[Appendix B]{sly_sun_zhang_2021}:
\begin{equation}\label{eq:large:k}
\alpha_\star(k)\equiv \frac{d_\star(k)}{k}=\left(2^{k-1} -\frac{1}{2}-\frac{1}{4\log 2}\right)\log 2 +o_k(1)\,,
\end{equation}
where $o_k(1)$ denotes an error tending to zero as $k\to\infty$. Since $d_1(k)=(2^{k-1}-1/2)k\log 2 +o_k(1)$, we have that $d_\star(k)\leq d_1(k)-\Omega(k)$.

\subsection{Related work}\label{subsec:related}

Many of the earlier mathematical works on r\textsc{csp}s focused on  determining their satisfiability thresholds and verifying the sharpness of \textsc{sat}-\textsc{unsat} transitions. For models that are known not to exhibit \textsc{rsb}, such goals were established. These models include random 2-\textsc{sat} \cite{cr92,bbckw01},  random 1-\textsc{in}-$k$-\textsc{sat} \cite{acim01}, $k$-\textsc{xor-sat} \cite{dm02,dgmm10,ps16}, and random linear equations \cite{acgm17}. On the other hand, for the models which are predicted to belong to 1\textsc{rsb} class, intensive studies have been conducted to estimate their satisfiability threshold, as shown in \cite{kkks98,ap04,cp16} (random $k$-\textsc{sat}), \cite{am06,cz12,cp12} (random $k$-\textsc{nae-sat}), and \cite{an05,c13,cv13,ceh16} (random graph coloring). 
	
	More recently, the satisfiability thresholds for r\textsc{\textsc{csp}}s that exhibits \textsc{rsb} have been rigorously determined for several models, namely the random regular $k$-\textsc{nae-sat} \cite{dss16},  maximum independent set on $d$-regular graphs \cite{dss16maxis}, random regular $k$-\textsc{sat} \cite{cp16} and  random $k$-\textsc{sat} \cite{DSS22} for large $k$ and $d$. Although determining the location of $q$-colorability threshold for the sparse Erdos Renyi graph is left open, the \textit{condensation threshold} $\alpha_{\sf cond}$ for random graph coloring, where the \textit{free energy} becomes non-analytic, was settled in \cite{bchrv16}. They carried out a technically challenging analysis based on a clever ``planting" technique, where the results were further generalized to other models in \cite{ckpz18}. Similarly,~\cite{bc16} identified the condensation threshold for random regular $k$-\textsc{sat}, where each variable appears $d/2$-times positive and $d/2$-times negative. Further, in the condensation regime $\alpha\in (\alpha_{\sf cond},\alpha_{\sf sat})$, many quantities of interest was established for random regular $k$-\textsc{nae-sat} with large enough $k$, matching the statistical physics prediction. Namely, the number of solutions at exponential scale (free energy) \cite{sly_sun_zhang_2021}, the concentration of the \textit{overlap} \cite{NSS, nss2}, and the local weak limit \cite{SS23} were established. Establishing the same quantities for other models in the condensation regime is left open. 

The closest result to ours in the literature is by Ayre, Coja-Oghlan, and Greenhill \cite{Ayre22}, where they lower bound the chromatic number (or equivalently, upper bound the colorability threshold) of the random regular graph of any degree, which is conjectured to be tight. \cite{Ayre22} also considers the sparse Erdos Renyi graph, which is more complicated since the conjectured chromatic number is defined in terms of a distributional (rather than real-valued) optimization due to the randomness of the local neighborhoods. In this work, we do not consider Erdos Renyi type problems, but we additionally address the question of the uniqueness of the \textsc{bp} fixed point for any $k\geq 3$ (unique solution to the equation \eqref{eq:BP}). As in \cite{Ayre22}, we use an interpolation bound, which gives an upper bound of the satisfiability threshold also for the (non-regular) random $k$-\textsc{nae-sat} model. It would be interesting to address the uniqueness of the \textsc{bp} fixed point for random $k$-\textsc{nae-sat} and random $k$-sat for small $k\geq 3$. We refer to \cite{shcherbina2003rigorous, montanari2019generalization, yu2022ising, gu2023uniqueness} which addresses the uniqueness of \textsc{bp} fixed point for various models.

\subsection{Proof methods}\label{subsec:proof}
We aim to rigorously establish the upper bound the satisfiability threshold predicted by the so-called `1\textsc{rsb} cavity method' from statistical physics \cite{Dall_Asta_2008}. To do so, instead of using moment methods, we use a technique called `interpolation method' from the theory of spin glasses developed by \cite{Franz03, guerra03, PT04}. The interpolation method has been successful in upperbounding the satisfiability threshold for random $k$-\textsc{sat} \cite{dss15ksat} for large $k$, the free energy for random regular $k$-\textsc{nae-sat} \cite{ssz16}, and the colorability threshold for random graphs \cite{Ayre22}. 

We first introduce the notations and mathematical framework that we use throughout the paper. For both the $d$-regular $k$-uniform hypergraphs and the $k$-\textsc{nae-sat} formula, we can represent them as (labelled) $(d,k)$-regular bipartite graph. Let $V=\{v_1,\ldots, v_n\}$ be the set of variables or nodes and $F= \{a_1,\ldots, a_m\}$ be the set of clauses or hyperedges. An edge is formed if the variable or node $v_i$ is included in the clause or hyperedge $a_j$. For an edge $e$, we denote $v(e)$ (resp. $a(e)$) by the variable (resp. clause) adjacent to it.

Denote $G=(V,F,E)$ by the resulting bipartite graph. We denote the neighborhood of $v\in V$ (resp. $a\in F$) by $\delta v:=\{a\in F: (a v)\in E\}$ (resp. $\delta a:=\{v\in V: (a v)\in E\}$). Throughout, we denote $\alpha\equiv \frac{m}{n}=\frac{d}{k}$. For the \textsc{nae-sat} formula, there is an extra label for each edge $e\in E$, namely the \textit{literal} $\tL_e\in \{0,1\}$, which specifies how the variable $v(e)$ participates in the clause $a(e)$. Then, the labelled graph $\GG=(V,F,E, \uL)\equiv (V,F,E, (\tL_e)_{e\in E})$ represents a \textsc{nae-sat} instance. 

\begin{defn}\label{def:model}
Given a \textsc{nae-sat} instance $\GG=(V,F,E, \uL)$, $\ux\in \{0,1\}^V$ is a (\textsc{nae-sat}) \textbf{solution} if
\[
\prod_{a\in F}\varphi((x_{v(e)}\oplus \tL_e)_{e\in \delta a})=1\,,
\]
where for $\underline{z}=(z_i)_{i \leq k} \in \{0,1\}^k$, $\varphi(\underline{z})\equiv\one(z_1=\ldots=z_k)$, and $\oplus$ denotes addition mod $2$. Given a graph $G=(V,F,E)$, $\ux \in \{0,1\}^V$ is a (hypergraph $2$-) \textbf{coloring} if $\ux$ is a \textsc{nae-sat} solution on $G$ with literals identically zero $(G,\underline{0})$.
\end{defn}
The configuration model can be described as follows. Add $d$ (resp. $k$) half-edges adjacent to each variable (resp. each clause) so that there are total $nd=mk$ number of half-edges adjacent to variables (resp. clauses). Thus, $E$ can be regarded as the perfect matching  between to the set of half-edges adjacent to variables to those adjacent to clauses, and hence a permutation in $S_{nd}$. Then, the configuration model $\bG=(V,F,\bE)$ is defined by taking $\bE \sim \textnormal{Unif}(S_{nd})$. For a random $d$-regular $k$-\textsc{nae-sat} instance $\bGG = (\bG, \buL)$, we take the literals $\buL\equiv (\bL_e)_{e\in E} \stackrel{i.i.d.}{\sim} \textnormal{Unif}(\{0,1\})$.

Note that the configuration model $\bG$ may induce multi-edges. However, if we denote $\mathscr{S}$ to be the event that $\bG$ is simple, then it is well-known that $\P(\bG\in \mathscr{S})=\Omega(1)$ (see e.g. Chapter 9 of \cite{jlrrg}). Thus, the configuration model is mutually contiguous with respect to the uniform distribution among all $(d,k)$-regular graphs, so to prove Theorem \ref{thm:main}, it suffices to work with the configuration model. 

In order to use the interpolation method, we consider the \textit{positive temperature} analogs of the $2$-coloring or the \textsc{nae-sat} model, which have more desirable properties due to the softness of the constraints - e.g. the concentration of the free energy as seen in Lemma \ref{lem:free:energy:concentration} below. We introduce notations that allow us to set up the positive temperature models. Let $S$ be a finite set and $\underline{b}\equiv (b_s)_{s\in S}$ be a vector with $b_s\geq 0$. Also, let $\mathcal{X}$ be a finite set encoding the spins and denote $\mathfrak{F}(\mathcal{X})$ by the set of functions $\mathcal{X}\to \R_{\geq 0}$. Let $f:S\to \mathfrak{F}(\mathcal{X})$ be a random function, i.e. $f(\cdot;s)\in \mathfrak{F}(\mathcal{X})$ is random for $s\in S$, and $f_1,\ldots, f_k$ be i.i.d. copies of $f$. Then, define the random function $\theta: \mathcal{X}^k \to \R$ by letting for $\ux=(x_1,\ldots, x_k)\in \mathcal{X}^k$,
\begin{equation}\label{eq:pre:theta}
\theta(\ux)=\sum_{s\in S}b_s \prod_{i=1}^{k}f_j(x_j;s)\,.
\end{equation}

We assume that there exists a constant $\eps\in (0,1)$ such that for any $\ux\in \mathcal{X}^k$,
\begin{equation}\label{eq:theta:finite:temp}
\eps\leq 1-\theta(\ux)\leq \eps^{-1} \quad\textnormal{almost surely.} 
\end{equation}
On a $(d,k)$-regular bipartite graph $G=(V,F,E)$, let $(\theta_a)_{a\in F}$ be i.i.d. copies of the random function $\theta$, and define the (random) Gibbs measure on $\mathcal{X}^V$ by
\[
\mu_{G}(\underline{x}) \equiv \frac{1}{Z(G)} \prod_{a \in F} \Big(1- \theta_{a} (\ux_{\delta a})\Big)\,,
\]
 where $Z(G)$ is the normalizing constant explicitly given by 
\begin{equation}\label{eq:partition}
Z(G)\equiv \sum_{\ux \in \mathcal{X}^{V}} \prod_{a\in F}\Big(1- \theta_{a} (\ux_{\delta a})\Big)\,.
\end{equation}
We note that the condition \eqref{eq:theta:finite:temp} on $\theta$ guarantees that the Gibbs measure $\mu_G$ is `finite temperature'. In particular, if we define the free energy
\begin{equation}\label{eq:def:free:energy}
F_n\equiv \frac{1}{n}\E \log Z(\bG)\,,
\end{equation}
where $\bG$ is drawn from the configuration model and $\E$ above is over the randomness of $\bG$ and randomness of $(\theta_a)_{a\in F}$, we have the following concentration of the free energy.

\begin{lemma}\label{lem:free:energy:concentration}
Assume that $\theta$ satisfies \eqref{eq:theta:finite:temp} with some constant $\eps \in (0,1)$. Then, for any $\delta >0$, there exists a constant which only depends on $\eps,\delta>0$ such that
\[
\P\left(\bigg|\frac{1}{n}\log Z(\bG)-F_n\bigg| \geq \delta \right)\leq e^{-cn}\,.
\]
\end{lemma}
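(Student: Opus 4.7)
The plan is to prove concentration of $\log Z(\bG)$ via a Doob martingale with bounded differences, exposing in sequence the two sources of randomness: the uniform matching $\bE\in S_{nd}$ that defines $\bG$, and the i.i.d.\ clause data $(\theta_a)_{a\in F}$. This is the standard route for random constraint-satisfaction models at finite temperature, and the hypothesis \eqref{eq:theta:finite:temp} is tailor-made to make it work.

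The input is two pointwise Lipschitz estimates for $\log Z$. For any $\ux\in\mathcal{X}^V$ and any $a\in F$, the factor $1-\theta_a(\ux_{\delta a})$ lies almost surely in $[\eps,\eps^{-1}]$ by \eqref{eq:theta:finite:temp}. Resampling a single $\theta_a$ therefore multiplies each summand in \eqref{eq:partition} by a ratio in $[\eps^2,\eps^{-2}]$, so $|\log Z|$ changes by at most $2\log(\eps^{-1})$. Similarly, applying a single transposition to $\bE$ (swapping the partners of two clause half-edges) modifies the neighborhoods of at most two clauses; since each of the two corresponding factors lies in $[\eps,\eps^{-1}]$ both before and after the swap, every summand in $Z(\bG)$ changes by a factor in $[\eps^4,\eps^{-4}]$, yielding $|\log Z(\bG)-\log Z(\bG')|\le 4\log(\eps^{-1})$ whenever $\bG,\bG'$ differ by one such swap.

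I then set up the Doob martingale $M_i:=\E[\log Z(\bG)\mid \mathcal{F}_i]$ for $i=0,1,\dots,nd+m$, where $\mathcal{F}_i$ reveals the partners under $\bE$ of the first $\min(i,nd)$ variable half-edges and, once $i>nd$, additionally the first $i-nd$ of the clause functions. Then $M_0=nF_n$ and $M_{nd+m}=\log Z(\bG)$. The clause-exposure differences are bounded by $2\log(\eps^{-1})$ from the first estimate. For the matching-exposure differences, conditional on $\mathcal{F}_{i-1}$ any two extensions of $\bE$ in which the $i$-th variable half-edge is matched to different clause half-edges $j$ vs.\ $j'$ can be coupled so that the two resulting full matchings differ by a single transposition, and the second estimate then controls the conditional difference by a constant $C=C(\eps)$.

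Azuma--Hoeffding applied to this $N:=nd+m\le 2nd$ step martingale finally yields
\[
\P\Big(\bigl|\tfrac{1}{n}\log Z(\bG)-F_n\bigr|\ge \delta\Big)\le 2\exp\Big(-\tfrac{\delta^2 n}{4dC^2}\Big)\le e^{-cn}
\]
with $c=c(\eps,\delta)>0$ absorbing the fixed parameters $d,k$. The main technical point is the one-transposition coupling argument for the matching-exposure steps; this is a classical swap trick for uniform random permutations, but it is the only place where the combinatorial structure of the configuration model, rather than an i.i.d.\ model, must be handled carefully.
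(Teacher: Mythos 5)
Your proposal is correct and follows essentially the same approach as the paper: a Doob martingale with increments bounded via the finite-temperature condition \eqref{eq:theta:finite:temp}, followed by Azuma--Hoeffding. The only cosmetic difference is that the paper exposes the configuration-model matching one clause at a time ($m$ steps, each revealing a clause's $k$ edges together with its $\theta_a$) rather than one half-edge at a time as you do, but both filtrations rest on the same observation that a single swap alters the neighborhoods of at most two clauses and hence shifts $\log Z$ by $O_\eps(1)$.
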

The concentration of free energy in Lemma \ref{lem:free:energy:concentration} is standard in literature \cite{bchrv16,CP19,Ayre22}, and we provide the proof in Section \ref{sec:2} for completeness.

\begin{defn}\label{def:pos}
(Positive temperature models) For $\beta>0$, called the inverse temperature, the positive temperature \textsc{nae-sat} model $\theta_{\textsc{nae}}(\cdot)\equiv \theta_{\textsc{nae}}(\cdot\,;\,\beta)$ is defined as follows. Let $\uL \equiv (\tL_i)_{i \le k}\stackrel{i.i.d.}{\sim}\textnormal{Unif}(\{0,1\})$ be a sequence of i.i.d. Bernoulli(1/2) random variables. Then for $\underline{x}=(x_i)_{i\leq k} \in \{0,1\}^{k}$, define 
\begin{equation}\label{eq:theta:nae}
    \theta_{\textsc{nae}}(\underline{x}) \equiv \theta_{\textsc{nae}}(\underline{x};\beta)\equiv (1-e^{-\beta} )\cdot \left(\prod_{i=1}^{k} (\tL_i \oplus x_i) + \prod_{i=1}^{k} (\tL_i \oplus x_i \oplus 1)
 \right)\,.
\end{equation}
That is, in the general form \eqref{eq:pre:theta}, we take $S=\mathcal{X}=\{0,1\}$, $b_i\equiv 1-e^{-\beta}$, and $f(x;0)\equiv 1-f(x;1)\equiv \one(x\oplus \tL)$ for $\tL\sim \textnormal{Unif}(\{0,1\})$. Moreover, the positive temperature hypergraph $2$-coloring model $\theta_{\textsc{col}}(\cdot)\equiv \theta_{\textsc{col}}(\cdot\,;\,\beta)$ is defined by taking $\tL_i\equiv 0$ above:
 \begin{equation}\label{eq:theta:col}
\theta_{\textsc{col}}(\underline{x}) \equiv \theta_{\textsc{col}}(\underline{x};\beta)\equiv (1-e^{-\beta} )\cdot\sum_{s\in \{0,1\}}\prod_{i=1}^{k}\one(x_i=s)\,,
 \end{equation}
 which is taking $f(x;s)=\one(x=s)$ in \eqref{eq:pre:theta}.
 \end{defn}
We note that formally taking $\beta = \infty$ and $\theta=\theta_{\textsc{col}}(\underline{x};\beta)$, the corresponding partition function $Z(G)$ equals the number of $2$-coloring on $G$. A similar statement holds for the \textsc{nae-sat} model.

By constructing a certain sequential coupling of the given factor graph $(\bG,(\theta)_{a\in F})$ to a set of disjoint trees so that the free energy is monotone at every step, the interpolation method \cite{Franz03,guerra03,PT04} gives an upper bound on the free energy $F_n$ as follows: for $\zeta \in \mathscr{P}\big(\mathscr{P}(\mathscr{P}(\mathcal{X}))\big)$, where $\mathscr{P}(A)$ denotes the set of probability measures on $A$, and $\lambda \in (0,1)$, there exists an explicit functional $\mathcal{P}(\zeta, \lambda)\equiv \mathcal{P}_{d,k,\theta}(\zeta, \lambda)$ such that we have $F_n\leq \inf_{\zeta,\lambda}\mathcal{P}(\zeta,\lambda)+o_n(1)$. By taking advantage of the interpolation method applied to positive temperature models in Definition \ref{def:pos} and the concentration of the free energy in Lemma \ref{lem:free:energy:concentration}, we prove the proposition below in Section \ref{sec:2}.

\begin{prop}\label{prop:interpolation}
For a given $k\ge 3$ and $d$, suppose that there is a solution $x\in [1/2-1/2^k,1/2]$ to the \textsc{bp} equation \eqref{eq:BP}. Further, suppose that $\boldsymbol{\prescript{\star}{}{\Phi}}(d)$ in \eqref{eq:def:Phi} defined with such $x$ satisfies $\boldsymbol{\prescript{\star}{}{\Phi}}(d)<0$. Then, with probability tending to one, no \textsc{nae-sat} solution exists on $\bGG$. Also, with probability tending to one, no $2$-coloring exists on $\bG$.
\end{prop}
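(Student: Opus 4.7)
The plan is to apply the 1\textsc{rsb} interpolation bound to the positive-temperature models of Definition \ref{def:pos}, specialize the distributional order parameter $\zeta$ and the Parisi parameter $\lambda$ according to the \textsc{bp} fixed point $x$ of \eqref{eq:BP}, and show that the resulting upper bound on $F_n(\beta)$ tends to $\bPhi(d)$ as $\beta\to\infty$. Since $\bPhi(d)<0$ by hypothesis, concentration of the free energy (Lemma \ref{lem:free:energy:concentration}) will then force $\tfrac{1}{n}\log Z(\bG;\beta)<0$ with high probability, which rules out the existence of any \textsc{nae-sat} solution or $2$-coloring.

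The conversion of a free-energy bound into a non-existence statement rests on the simple observation that any exact solution $\ux$ contributes weight $\prod_a(1-\theta_a(\ux_{\delta a}))=1$ to $Z(\bG;\beta)$, because both $\theta_{\textsc{nae}}$ and $\theta_{\textsc{col}}$ vanish on satisfied clauses (regardless of $\beta$). Hence if any solution exists then $Z(\bG;\beta)\geq 1$, so $\tfrac{1}{n}\log Z(\bG;\beta)\geq 0$. Consequently, whenever we establish $F_n(\beta)\leq \bPhi(d)/2<0$ at some finite $\beta$, Lemma \ref{lem:free:energy:concentration} applied with $\delta=|\bPhi(d)|/4$ produces the required probability bound $e^{-cn}$.

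For the choice of $(\zeta,\lambda)$, I would take $\zeta$ to be a Dirac mass on the element of $\mathscr{P}(\mathscr{P}(\mathcal{X}))$ supported equally on the two Dirac spin-distributions $\delta_0,\delta_1\in\mathscr{P}(\{0,1\})$, which encodes the frozen cluster picture in which each variable is rigidly forced to a single color, while the effective mass $x$ on the ``free'' status at the edge level is tuned via $\lambda$. The interpolation functional $\mathcal{P}(\zeta,\lambda)$ then splits into a variable contribution, a clause contribution, and an edge correction; in the $\beta\to\infty$ limit they collapse respectively to $-\log(1-x)$, $-d(1-k^{-1}-d^{-1})\log(1-2x^k)$, and $(d-1)\log(1-x^{k-1})$, reproducing the three summands in \eqref{eq:def:Phi}. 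The \textsc{bp} equation \eqref{eq:BP} is precisely the stationarity condition in $x$ for this one-parameter functional, so substituting the fixed point yields the optimal value $\bPhi(d)$.

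The main obstacle is that the constant $\eps$ in \eqref{eq:theta:finite:temp} degenerates as $\beta\to\infty$, so one cannot directly take the $\beta=\infty$ limit inside Lemma \ref{lem:free:energy:concentration}. The remedy is to run the argument at a sufficiently large but finite $\beta_0=\beta_0(k,d)$ for which the interpolation functional is within $|\bPhi(d)|/4$ of its $\beta=\infty$ limit, and the concentration constant from Lemma \ref{lem:free:energy:concentration} is uniformly controlled. A secondary technical point is the explicit reduction of the three-level distributional expectations appearing in $\mathcal{P}(\zeta,\lambda)$ to the single scalar $x$: this uses the $\mathbb{Z}_2$-symmetry shared by the \textsc{nae-sat} and coloring constraints (cf.\ the upcoming Lemma \ref{lem:symmetry}) to collapse all message distributions to the two-point form above. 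Once this symmetrization is carried out, the three contributions of $\bPhi(d)$ emerge directly from the standard vertex/clause/edge decomposition of the interpolation functional, and the proposition follows.
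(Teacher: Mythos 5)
Your overall architecture (interpolation bound for the positive-temperature models, the observation that any exact solution forces $Z\geq 1$, and concentration via Lemma~\ref{lem:free:energy:concentration} at a large but finite $\beta_0$) is exactly the paper's route. However, there is a genuine gap in the central step, namely your choice of the order parameter. You take $\zeta$ to be a point mass at $\eta_0=\tfrac12\delta_{\delta_0}+\tfrac12\delta_{\delta_1}$, i.e.\ all cavity messages are frozen, and you assert that the mass $x$ of ``free'' messages ``is tuned via $\lambda$.'' This is not a coherent mechanism inside Theorem~\ref{thm:ssz}: $\lambda\in(0,1)$ is only the scalar exponent in $\E'[(\cdot)^\lambda]$ and cannot create a third atom in the message distribution. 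The paper's choice \eqref{eq:def:zeta}--\eqref{eq:def:eta} is a \emph{three}-point measure, putting mass $x$ on each of the two (near-)frozen messages $\tfrac{e^{\pm\beta}}{e^{\beta}+e^{-\beta}}$ and mass $1-2x$ on the uniform message $\tfrac12$, with $\lambda=\beta^{-1/2}$; the atom of weight $1-2x$ at the free message is indispensable. Indeed, running the computation of Lemma~\ref{lem:crucial} with your $\eta_0$ gives, after dividing by $\beta^{1/2}$, the exponent $\log\bigl(2(1-x^{k-1})^d-(1-2x^{k-1})^d\bigr)-(k-1)\alpha\log(1-2x^k)$ evaluated at $x=\tfrac12$, which is essentially the first-moment exponent $\log 2+\alpha\log(1-2^{1-k})$ (up to a correction of smaller order). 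The BP fixed point $x(k,d)$ is precisely the optimizer of this expression in $x$, and the identity $2(1-x^{k-1})^d-(1-2x^{k-1})^d=\tfrac{(1-x^{k-1})^{d-1}(1-2x^k)}{1-x}$, which converts the bound into $\bPhi(d)$, is only valid at that fixed point. So with $x=\tfrac12$ forced by your $\eta_0$, the upper bound becomes negative only near the first-moment threshold $d_1(k)$, not at $d_\star(k)$, and the proposition fails to be proved exactly in the regime $\bPhi(d)<0$ it is meant to cover.

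Two smaller inaccuracies: the bound produced by the correct choice is $C+\beta^{1/2}\bPhi(d)$, which tends to $-\infty$ (not to $\bPhi(d)$) as $\beta\to\infty$ when $\bPhi(d)<0$; this is harmless for your scheme, since all you need is a negative bound at one finite $\beta_0$. Also, Lemma~\ref{lem:symmetry} is not about collapsing messages to a two-point form; its role is to show that, for a literal-symmetric $\eta_0$, the functional $\mathcal{P}$ does not depend on the literals, so that the \textsc{nae-sat} and coloring models share the same interpolation bound and a single computation (Lemma~\ref{lem:crucial}) covers both conclusions of the proposition.
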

Moreover, we show that $d_\star(k)$ in Theorem \ref{thm:main} is well-defined and that the assumptions of Proposition \ref{prop:interpolation} are meaningful. Note that the \textsc{bp} equation \eqref{eq:BP} is equivalent to $\Psi_d(x)=x$, where $\Psi_d \equiv \Psi_{k,d}:[0,1]\to [0,1]$ is defined by $ \Psi_d \equiv \dot{\Psi}\circ \hat{\Psi}$ with 
\begin{equation}\label{eq:def:var:clause:BP}
\dot{\Psi}(x)\equiv  \dot{\Psi}_d(x) \equiv \frac{1-x^{d-1}}{2-x^{d-1}}\,,\quad\quad \hat{\Psi}(x)\equiv \hat{\Psi}_k(x)\equiv \frac{1-2x^{k-1}}{1-x^{k-1}}\,.
\end{equation}
The function $\dot{\Psi}(\cdot)$ is \textit{variable} \textsc{bp} \textit{recursion} and $\hat{\Psi}(\cdot)$ is \textit{clause} \textsc{bp} \textit{recursion} (see \cite[Section 3.1]{dss16} for the motivation).
\begin{prop}\label{prop:bp}
For $k\geq 3$ and $d\in [d_{\textnormal{lbd}}(k),d_{\textnormal{ubd}}(k)]$, there exists a unique root to $\Psi_{d}(x)\equiv (\dot{\Psi}\circ \hat{\Psi})(x)=x$ in the interval $x\in [1/2-1/2^k,1/2]$. Thus, $\boldsymbol{\prescript{\star}{}\Phi}(d)$ in equation \eqref{eq:def:Phi} is well-defined. Furthermore, $d\to \boldsymbol{\prescript{\star}{}\Phi}(d)$ is continuous in the interval $d\in [d_{\textnormal{lbd}}(k),d_{\textnormal{ubd}}(k)]$ with $\boldsymbol{\prescript{\star}{}\Phi}(d_{\textnormal{lbd}}(k))>0$ and $\boldsymbol{\prescript{\star}{}\Phi}(d_{\textnormal{ubd}}(k))<0$.
\end{prop}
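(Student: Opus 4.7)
The plan is to treat \eqref{eq:BP} as the fixed-point equation $\Psi_d(x) = x$ on the closed interval $I_k := [1/2 - 1/2^k,\, 1/2]$ and analyze the one-dimensional map $\Psi_d = \dot{\Psi}_d \circ \hat{\Psi}_k$ directly. First I would verify that $\Psi_d$ sends the endpoints of $I_k$ into $I_k$. Since $\dot{\Psi}_d$ and $\hat{\Psi}_k$ are each strictly decreasing (immediate from the derivative formulas below), their composition $\Psi_d$ is strictly increasing, so it suffices to check $\Psi_d(1/2) \le 1/2$ and $\Psi_d(1/2 - 1/2^k) \ge 1/2 - 1/2^k$. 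Expanding $\hat{\Psi}_k(x)^{d-1}$ at the two endpoints reduces these to explicit inequalities in $d$ and $k$, which are precisely what the definitions of $d_{\textnormal{lbd}}(k)$ and $d_{\textnormal{ubd}}(k)$ in \eqref{eq:interval:d} are tuned to guarantee.

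The core step is a uniform contraction bound $0 < \Psi_d'(x) < 1$ on $I_k$ for every $d \in [d_{\textnormal{lbd}}, d_{\textnormal{ubd}}]$. Direct differentiation yields
\begin{equation*}
\Psi_d'(x) = \frac{(d-1)(k-1)\,\hat{\Psi}_k(x)^{d-2}\, x^{k-2}}{\bigl(2 - \hat{\Psi}_k(x)^{d-1}\bigr)^{2}\,\bigl(1 - x^{k-1}\bigr)^{2}}.
\end{equation*}
On $I_k$ one has $x^{k-1} \le 2^{-(k-1)}$, so $\hat{\Psi}_k(x) = 1 - \Theta(x^{k-1})$; for $d$ in the stated range $(d-1)\bigl(1 - \hat{\Psi}_k(x)\bigr) = \Theta(k)$, hence $\hat{\Psi}_k(x)^{d-1}$ is bounded away from $1$ uniformly and the denominator is $\Theta(1)$, while the numerator is of order $(d-1)(k-1) \hat{\Psi}_k(x)^{d-1} x^{k-2} = O(k^2\, 2^{-k})$. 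This gives $\Psi_d'(x) < 1$ for all sufficiently large $k$; for $k \in \{3,4\}$ the enlarged values of $d_{\textnormal{lbd}}$ in \eqref{eq:interval:d} are chosen precisely so that the same bound holds by direct numerical inspection. Combined with the endpoint invariance, $\Psi_d' - 1 < 0$ makes $\Psi_d(x) - x$ strictly decreasing on $I_k$, so by the intermediate value theorem there is a unique fixed point $x = x(k, d) \in I_k$, proving the first assertion and hence that $\boldsymbol{\prescript{\star}{}\Phi}(d)$ is well-defined.

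Continuity of $\boldsymbol{\prescript{\star}{}\Phi}$ then follows from the implicit function theorem: on $[d_{\textnormal{lbd}}, d_{\textnormal{ubd}}] \times I_k$, the map $(d,x) \mapsto \Psi_d(x) - x$ is $C^\infty$ and $\partial_x(\Psi_d - x) = \Psi_d' - 1 \neq 0$ by the previous step, so $d \mapsto x(k,d)$ is continuous; since the right-hand side of \eqref{eq:def:Phi} is also smooth in $(d, x)$ on this domain (both $1 - 2x^k$ and $1 - x^{k-1}$ stay strictly positive), continuity of $\boldsymbol{\prescript{\star}{}\Phi}$ in $d$ follows. For the signs at the endpoints I would use \eqref{eq:BP} to eliminate $(d-1)\log(1 - x^{k-1})$ from \eqref{eq:def:Phi}, reducing $\boldsymbol{\prescript{\star}{}\Phi}(d)$ to a form amenable to Taylor expansion in $2^{-k}$. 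At $d = d_{\textnormal{ubd}}(k) = 2^{k-1} k \log 2$ the fixed point is exponentially close to $1/2$ with $2x^k \sim 2^{-(k-1)}$, and the expansion gives $\boldsymbol{\prescript{\star}{}\Phi}(d_{\textnormal{ubd}}) < 0$; a parallel expansion near $x = 1/2 - 1/2^k$ yields $\boldsymbol{\prescript{\star}{}\Phi}(d_{\textnormal{lbd}}) > 0$. The small-$k$ corners ($k = 3$ at $d_{\textnormal{ubd}}$, and $k \in \{3,4\}$ at $d_{\textnormal{lbd}}$) are closed by plugging the explicit numerical values in \eqref{eq:interval:d} into the simplified formula.

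The main obstacle is the contraction bound. The asymptotic estimate $|\Psi_d'| = O(k^2 2^{-k})$ is very comfortable for large $k$, but at $k = 3, 4$ it is tight, and the enlarged values $d_{\textnormal{lbd}}(3) = 6.74$ and $d_{\textnormal{lbd}}(4) = 16.7$ are calibrated precisely so that both contraction and the boundary signs of $\boldsymbol{\prescript{\star}{}\Phi}$ hold simultaneously. Producing a clean uniform analytic bound that avoids case-splitting for these small-$k$ corners is the technically most demanding part of the proof; the other steps are essentially routine once contraction is in hand.
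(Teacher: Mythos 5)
Your treatment of existence, uniqueness, and continuity is essentially the paper's: endpoint conditions plus a uniform contraction bound $|(\Psi_d)'(x)|<1$, then the intermediate/mean value theorems and the implicit function theorem, and your derivative formula together with the $O(k^2 2^{-k})$ estimate is exactly the computation the paper makes precise for $k\ge 5$ (Lemmas \ref{lem:derivativebound} and \ref{lem:leftendpoint}). One misattribution: for $k=4$ the enlarged value $d_{\textnormal{lbd}}(4)=16.7$ is not what rescues the contraction --- the paper proves $|(\Psi_d)'|<1$ on all of $[24\log 2,32\log 2]$, but only after replacing the ``bound each recursion separately'' strategy by a monotonicity-in-$v$ and monotonicity-in-$d$ reduction; the adjustment to $16.7$ is needed for the endpoint inequality, while for $k=3$ the restriction to $[6.74,7.5]$ is needed both for contraction and for $\bPhi$ to be well-defined at all. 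More importantly, ``direct numerical inspection'' cannot by itself close the small-$k$ cases: the statements are quantified over a continuum of $(d,x)$, so one must first prove monotonicity/convexity reductions to finitely many evaluations, and this is where most of the paper's work lies.

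The genuine gap is the sign analysis at the endpoints. You propose to localize the fixed point and Taylor-expand \eqref{eq:def:Phi} in $2^{-k}$, but at $d_{\textnormal{ubd}}(k)=2^{k-1}k\log 2$ the leading terms cancel identically: writing $x=\tfrac12-\delta$ and expanding the logarithms to first order, the expansion of $\bPhi$ reduces to $\log 2 - 2^{-(k-1)}d/k$ plus corrections, and $\log 2 - 2^{-(k-1)}d_{\textnormal{ubd}}(k)/k=0$ exactly; the surviving contributions are of order $k2^{-k}$ with further partial cancellations, so the claimed negativity hinges on an uncarried-out second-order computation together with a rigorous two-sided localization of $x(k,d)$ (your asserted locations of the fixed points are themselves unverified --- e.g.\ at $d_{\textnormal{lbd}}$ the fixed point sits at distance of order $2^{-k-2}$ from $1/2$, not near $1/2-1/2^k$). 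As written, ``the expansion gives $\bPhi(d_{\textnormal{ubd}})<0$'' is an assertion, not a proof, and it is the key remaining step. The paper avoids this entirely for $k\ge 4$ by proving the stronger uniform statements $\Phi(d_{\textnormal{lbd}}(k),x)>0$ and $\Phi(d_{\textnormal{ubd}}(k),x)<0$ for every $x\in[1/2-1/2^k,1/2]$ (Lemmas \ref{lem:leftendpoint:Phi} and \ref{lem:rightendpoint:Phi}, with $\Phi$ as in \eqref{eq:def:Phi:d:x}), using monotonicity/convexity in $x$ and the exact evaluation $\Phi(2^{k-1}k\log 2,\tfrac12)=\log 2+2^{k-1}\log 2\cdot\log(1-2^{-(k-1)})<0$; only for $k=3$, where the uniform-sign statement is false, does it localize the fixed point via $\Psi_d(a)>a$, $\Psi_d(b)<b$ and use monotonicity of $\Phi(d,\cdot)$ on that window (Lemmas \ref{lem:threephipositive} and \ref{lem:threephinegative}). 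Your route could be repaired along those lines, but the sign step needs to be supplied, not sketched.
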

The proof of Proposition \ref{prop:bp} is given in Section \ref{sec:3} for $k\geq 4$ and in Section \ref{sec:4} for $k=3$, which requires extra numerical estimates. Finally, we show that the first moment $\E Z_{\textsc{col}}$ of the number of $2$-colorings on random $d$-regular $k$-uniform hypergraphs is the same with $\E Z_{\textsc{nae}}$ up to a constant.
\begin{lemma}\label{lem:firstmo:coloring}
For $k\geq 3$, there exist constants $C_{k,d,i}$ for $i=1,2,$, which only depends on $k,d$ such that $\E Z_{\textsc{col}}/ \E Z_{\textsc{nae}}\in [C_{k,d,1}, C_{k,d,2}]$
\end{lemma}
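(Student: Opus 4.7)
The plan is to derive an exact combinatorial expression for $\E Z_{\textsc{col}}$ and then analyze the resulting sum via the local central limit theorem (LCLT). Under the configuration model, every coloring $\ux \in \{0,1\}^V$ with the same profile $(n_0, n_1) \equiv (|\{v : x_v = 0\}|, |\{v : x_v = 1\}|)$ has the same probability of being valid, and a direct counting argument --- using that, with $q_k(x) := (1+x)^k - 1 - x^k$, the coefficient $[x^{n_1 d}] q_k(x)^m$ counts the number of ways to color the $mk$ clause-slots with $n_1 d$ ones and $n_0 d$ zeros so that no clause is monochromatic --- yields
\begin{equation*}
\E Z_{\textsc{col}} \;=\; \sum_{n_0 = 0}^{n} \binom{n}{n_0}\, \frac{[x^{n_1 d}]\, q_k(x)^m}{\binom{nd}{n_0 d}}\,.
\end{equation*}

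Since $q_k(1) = 2^k - 2$, we have $\E Z_{\textsc{nae}} = 2^n(1-2^{-k+1})^m = q_k(1)^m \cdot 2^{n-nd}$, so the ratio admits a probabilistic interpretation. Let $X_1,\ldots,X_m$ be i.i.d.\ with $\Pr(X = j) = \binom{k}{j}/(2^k - 2)$ for $j \in \{1,\ldots,k-1\}$, so that $\E X = k/2$ by symmetry and $\sigma_k^2 := \Var(X) \leq k/4$. Writing $p_N(\cdot)$ for the PMF of $\mathrm{Bin}(N,1/2)$ and $\pi_m(\cdot)$ for the PMF of $S_m := X_1 + \cdots + X_m$,
\begin{equation*}
\frac{\E Z_{\textsc{col}}}{\E Z_{\textsc{nae}}} \;=\; \sum_{n_0=0}^{n} p_n(n_0)\, \cdot\, \frac{\pi_m(n_1 d)}{p_{nd}(n_0 d)}\,.
\end{equation*}

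Since the support $\{1,\ldots,k-1\}$ of $X$ has gcd $1$ for $k \geq 3$, the LCLT applies to $S_m$. Writing $n_0 = n/2 + y$ and carrying out the Stirling/LCLT expansions uniformly for $|y| \leq \sqrt{n\log n}$ gives
\begin{equation*}
p_n(n_0)\,\cdot\, \frac{\pi_m(n_1 d)}{p_{nd}(n_0 d)} \;\sim\; \sqrt{\frac{k}{2\pi n \sigma_k^2}}\, \exp\!\Big(\!-\frac{y^2}{n}\Big[\frac{dk}{2\sigma_k^2} - 2(d-1)\Big]\Big)\,,
\end{equation*}
where the bracket $B := \frac{dk}{2\sigma_k^2} - 2(d-1)$ is strictly positive thanks to $\sigma_k^2 \leq k/4$. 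Approximating the sum by a Gaussian integral --- after using exponential tilting / Cram\'er bounds to control the tail $|y| > \sqrt{n\log n}$ --- gives
\begin{equation*}
\lim_{n\to\infty}\frac{\E Z_{\textsc{col}}}{\E Z_{\textsc{nae}}} \;=\; \sqrt{\frac{k}{2\sigma_k^2 B}} \;\in\; (0,\infty)\,,
\end{equation*}
which yields the two-sided bound for all sufficiently large $n$; the finitely many remaining admissible values of $n$ are absorbed into the constants $C_{k,d,i}$.

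The main technical hurdle is the uniform LCLT together with the tail control: the three Gaussian exponents coming from $p_n$, $1/p_{nd}$, and $\pi_m$ must be combined carefully, since the first two would individually force the sum to diverge and only their net coefficient $-B/n$ is negative. Verifying the strict positivity of $B$ --- equivalently $\sigma_k^2 < \frac{dk}{4(d-1)}$ --- is immediate from the crude bound $\sigma_k^2 \leq k/4$ (conditioning a binomial on a symmetric proper subset of its support cannot increase the variance past $k/4$), but it is the essential analytic fact ensuring concentration of the summand around $n_0 = n/2$ and hence the $\Theta(1)$ ratio.
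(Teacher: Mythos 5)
Your proposal is correct in outline and reaches the right conclusion, but it executes the tilting/local-CLT idea differently from the paper. You first derive the exact identity $\E Z_{\textsc{col}}=\sum_{n_0}\binom{n}{n_0}[x^{n_1d}]q_k(x)^m/\binom{nd}{n_0d}$ and recognize $\E Z_{\textsc{col}}/\E Z_{\textsc{nae}}$ as $\sum_{n_0}p_n(n_0)\,\pi_m(n_1d)/p_{nd}(n_0d)$, i.e.\ you tilt once, at the symmetric point (your law $\Pr(X=j)=\binom{k}{j}/(2^k-2)$ is exactly the paper's $\nu_{1/2,0}$), and then absorb the displacement $y=n_0-n/2$ into competing Gaussian factors, checking that the net coefficient $B=\frac{dk}{2\sigma_k^2}-2(d-1)$ is positive (indeed $B\ge 2$ since $\sigma_k^2\le k/4$). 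The paper instead keeps $\bp_\gamma$ as a conditional probability for i.i.d.\ $\mathrm{Binom}(k,\gamma)$ variables and re-tilts at every $\gamma$ (Lemma \ref{lem:lagrange}), so the local CLT is only ever invoked at the center of the tilted law, and the sum over $\gamma$ is controlled by comparing the rate functions $F_\alpha$ and $G_\alpha$, both uniquely maximized at $1/2$; your fixed-tilt route buys a cleaner exact formula and even the limiting constant $\sqrt{k/(2\sigma_k^2B)}$ (more than the lemma needs), at the cost of needing control of $\pi_m$ away from its mean. That cost is where your writeup is slightly too glib: the basic lattice LCLT gives an additive error $o(m^{-1/2})$ uniformly, so your claimed asymptotic equivalence does not hold uniformly out to $|y|\le\sqrt{n\log n}$ (for $\sqrt{n}\ll|y|$ the Gaussian term is smaller than the error term); moreover, in that middle range a crude Hoeffding bound on $S_m$ (with range $k-2$) does not beat the growth $e^{2(d-1)y^2/n}$ coming from $p_n/p_{nd}$ when $k$ is large, so you must use the genuine Cram\'er/tilted bound whose local rate is governed by $\sigma_k^2$, exactly as your $B>0$ computation suggests. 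With the central window restricted to $|y|\le A\sqrt n$ (letting $A\to\infty$) and the complementary range handled by such one-sided Chernoff bounds plus sharp Stirling estimates on the binomial PMFs, your argument goes through and yields the two-sided bound of the lemma.
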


\begin{proof}[Proof of Theorem \ref{thm:main}] By Proposition \ref{prop:bp}, the function $\boldsymbol{\prescript{\star}{}\Phi}(d)$ is well-defined and has a root in the interval $ [d_{\textnormal{lbd}}(k),d_{\textnormal{ubd}}(k)]$. Moreover, since $\boldsymbol{\prescript{\star}{}\Phi}(d_{\textnormal{ubd}}(k))<0$ holds and $\boldsymbol{\prescript{\star}{}\Phi}(\cdot)$ is continuous, we have $\boldsymbol{\prescript{\star}{}\Phi}(d)<0$ for $d\in (d_\star(k), d_{\textnormal{ubd}}(k)]$. Hence, Proposition \ref{prop:interpolation} shows that if $d\in (d_\star(k), d_{\textnormal{ubd}}(k)]$, then the $2$-coloring of random $d$-regular $k$-uniform hypergraph and random $d$-regular $k$-\textsc{nae-sat} is not satisfiable, both with probability tending to one as $n\to\infty$. Further, since $\E Z_{\textsc{col}}\asymp_{k,d} \E Z_{\textsc{nae}}=\exp\big(n\big(\log 2+\alpha\log\big(1-2^{-k+1}\big)\big)\big)$ by Lemma~\ref{lem:firstmo:coloring} and $\log 2+\alpha\log\big(1-2^{-k+1}\big)<0$ holds for $d>d_{\textnormal{ubd}}(k)$, the same is true for $d>d_{\textnormal{ubd}}(k)$ by Markov's inequality.
\end{proof}

\section{Satisfiability upper bound by interpolation}
\label{sec:2}
In this section, we prove Lemma \ref{lem:free:energy:concentration}, Proposition \ref{prop:interpolation}, and Lemma \ref{lem:firstmo:coloring}. We prove Proposition \ref{prop:interpolation} in Section \ref{subsec:interpolation} based on the interpolation bound from statistical physics \cite{Franz03, guerra03}. In Section \ref{subsec:lemma}, we prove Lemma \ref{lem:free:energy:concentration} based on Azuma Hoeffding's inequality applied to the Doob martingale with respect to clause revealing filtration. In Section \ref{subsec:firstmo}, we prove Lemma \ref{lem:firstmo:coloring} based on the local central limit theorem.

\subsection{Proof of Proposition \ref{prop:interpolation}}
\label{subsec:interpolation}
Throughout, we assume that we are given $k\geq 3$ and $d$ such that there is a solution $x\in [1/2-1/2^k,1/2]$ to the equation \eqref{eq:BP}. We use the following \textit{one-step-replica-symmetry-breaking bound} proven in \cite[Theorem E.3]{sly_sun_zhang_2021} for random regular graphs, which is the analog of \cite[Theorem 3]{PT04} for Erdos Renyi graphs.

\begin{thm}\label{thm:ssz}(Theorem E.3 in \cite{sly_sun_zhang_2021}) Let $\mathcal{X}$ and $S$ be finite sets and consider the partition function $Z(G)$ (cf. Eq.~\eqref{eq:partition}), where $\theta$ in \eqref{eq:pre:theta} satisfies the condition \eqref{eq:theta:finite:temp} for some $\eps>0$ and $b_s\geq 0$ holds for $s\in S$. Let $\mathcal{M}_0\equiv \mathscr{P}(\mathcal{X})$ be the space of probability measures over $\mathcal{X}$, $\mathcal{M}_1\equiv \mathscr{P}(\mathcal{M}_0)$ be the space of probability measures over $\mathcal{M}_0$, and $\mathcal{M}_2 \equiv \mathscr{P}(\mathcal{M}_1)$ be the space of probability measures over $\mathcal{M}_1$. For $\zeta \in \mathcal{M}_2$, let $\underline{\eta} = (\eta_{a,j})_{a \ge 0, j \ge 0}$ be an array of i.i.d. samples from $\zeta$. For each index $(a, j)$ let $\rho_{a, j}\in \mathscr{P}(\mathcal{X})$ be a conditionally independent sample from $\eta_{a, j}$, and denote $\underline{\rho} = (\rho_{a,j})_{a \ge 0, j \ge 0}.$ For $x \in \mathcal{X}$ define random variables 
\begin{equation*}
\begin{split}
    u_a(x) \equiv \sum_{\underline{x} \in \mathcal{X}^k} \one \{x_1 = x \} \big(1 - \theta_a (\underline{x})\big) \prod_{j=2}^{k} \rho_{a,j} (x_j)\,,\quad\quad u_a \equiv \sum_{\underline{x} \in \mathcal{X}^k} \big(1 - \theta_a (\underline{x})\big) \prod_{j=1}^{k} \rho_{a,j} (x_j)\,,
\end{split}
\end{equation*}
where we recall that $(\theta_a)_{a\geq 0}$ are i.i.d. copies of the random function $\theta$. For any $\lambda \in (0,1)$ and any $\zeta \in \mathcal{M}_2$,

\begin{equation}\label{eq:1rsb}
\begin{split}
&F_n \le \mathcal{P}(\zeta, \lambda)+O_{\eps}(n^{-1/3})\,,\quad\textnormal{where}\\
&\mathcal{P}(\zeta, \lambda)\equiv \mathcal{P}_{\theta}(\zeta,\lambda):=\lambda^{-1} \mathbb{E} \log \mathbb{E^\prime} \bigg[ \Big( \sum_{x \in \mathcal{X}} \prod_{a=1}^{d} u_a(x) \Big)^{\lambda} \bigg] - (k-1)\alpha \lambda^{-1} \mathbb{E} \log \mathbb{E^\prime} \Big[ \left(u_0 \right)^{\lambda} \Big]\,.
\end{split}
\end{equation}
Here, $F_n$ is the free energy for the configuration model defined in \eqref{eq:def:free:energy}, $\mathbb{E}^\prime$ denotes the expectation over $\underline{\rho}$ conditioned on all else, and $\mathbb{E}$ denotes the overall expectation.
\end{thm}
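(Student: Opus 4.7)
The plan is to prove \eqref{eq:1rsb} via the Franz--Leone and Guerra--Toninelli interpolation method, adapted to the $(d,k)$-regular configuration model as in \cite{PT04}. The idea is to continuously deform $\bG$ into a decoupled union of ``cavities'' carrying i.i.d.\ messages drawn from $\zeta\in\mathcal{M}_2$, and to show that the 1\textsc{rsb} free-energy functional is monotone along this deformation.

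Concretely, for each $s\in\{0,1,\ldots,m\}$ one builds a hybrid factor graph $\bG_s$ on the $n$ variables in which $s$ of the $m$ clauses are present as in the configuration model (each carrying an i.i.d.\ copy $\theta_a$), while the $k(m-s)$ variable half-edges belonging to the ``absent'' clauses are instead attached to independent cavity leaves carrying hierarchically sampled weights $\rho_{a,j}(x_{v(a,j)})$ in the Gibbs measure. Let $Z_s$ be the resulting partition function and set $\Phi(s):=(n\lambda)^{-1}\E\log\E'[Z_s^{\lambda}]$. At the endpoint $s=m$ no cavities remain and $\Phi(m)=F_n+O(n^{-1/3})$, the error coming from coupling the sequential construction to the uniform half-edge matching underlying $\bG$. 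At $s=0$ the factor graph decouples into $nd$ independent variable--cavity attachments, and a direct product computation gives $\Phi(0)=\mathcal{P}(\zeta,\lambda)$, with the two terms in \eqref{eq:1rsb} matching the per-variable Bethe contribution $\sum_{x}\prod_{a=1}^{d}u_a(x)$ and the per-clause overcounting term $u_0$ respectively.

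The proof then reduces via the telescoping identity $\Phi(m)-\Phi(0)=\sum_{s=1}^{m}(\Phi(s)-\Phi(s-1))$ to checking $\Phi(s)-\Phi(s-1)\le 0$ for every $s$. Passing from $s-1$ to $s$ replaces the $k$ cavity factors $\prod_{j=1}^{k}\rho_{a,j}(x_j)$ on the $k$ half-edges of the newly inserted clause by the single clause weight $1-\theta_{a}(\underline{x})$. Conditioning on everything outside these $k$ half-edges, the increment reduces to a schematic inequality of the form
\begin{equation*}
\E\log\E'\bigl[A\cdot u_0^{\lambda}\bigr]\;\le\;\E\log\E'\Bigl[A\cdot\prod_{j=1}^{k}\rho_{a,j}(x_j)^{\lambda}\Bigr]
\end{equation*}
for a nonnegative ``frozen'' factor $A$. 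The correct sign is produced by the concavity of $t\mapsto t^{\lambda}$ for $\lambda\in(0,1)$ applied to the outer expectation over $\underline\eta\sim\zeta$, combined with the conditional independence of the inner samples $\rho_{a,j}\sim\eta_{a,j}$.

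The main obstacle is this sign check. A single application of Jensen's inequality does not suffice: the two-level hierarchy $\zeta\in\mathscr{P}(\mathscr{P}(\mathscr{P}(\mathcal{X})))$ is essential, and the concavity of $t\mapsto t^{\lambda}$ for $\lambda\in(0,1)$ must be paired with a replica identity at the inner level before Jensen is applied on the outer level. A secondary technical obstacle is the $O(n^{-1/3})$ coupling error at the endpoint $s=m$, which can be handled by a standard switching argument controlling multi-edges and atypical short-cycle neighborhoods in the configuration model; the finite-temperature condition \eqref{eq:theta:finite:temp} ensures this error propagates linearly into the final bound.
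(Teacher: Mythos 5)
First, note that the paper does not actually prove this statement: Theorem \ref{thm:ssz} is imported verbatim as Theorem E.3 of \cite{sly_sun_zhang_2021} and used as a black box, so the only ``proof'' in the paper is a citation, and your proposal is an attempt at reconstructing the external argument. Your overall route (Franz--Leone/Guerra--Toninelli interpolation adapted to the $(d,k)$-regular configuration model, hybrid graphs in which absent clauses are replaced by cavity leaves carrying hierarchical weights, telescoping over clause insertions) is indeed the standard skeleton behind such bounds, so the structure is aligned with the cited proof.

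However, there are two genuine gaps. The central one is the sign check: you state the increment inequality only ``schematically'' and assert it follows from concavity of $t\mapsto t^{\lambda}$ plus ``a replica identity at the inner level,'' but this is precisely the step that cannot be done by convexity alone. The Franz--Leone mechanism requires expanding $\log$ of the relevant ratio in a power series and exploiting the specific multilinear form $\theta(\ux)=\sum_{s}b_s\prod_j f_j(x_j;s)$ with $b_s\ge 0$ (and $1-\theta$ bounded away from $0$ as in \eqref{eq:theta:finite:temp}), so that every term of the expansion is a nonnegative combination of generalized multi-overlaps; without invoking this hypothesis the claimed monotonicity is false for general bounded $\theta$, and your proposal never uses $b_s\ge 0$ at all. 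The second gap is endpoint bookkeeping: in the construction you describe, the fully decoupled system at $s=0$ factorizes into $n$ independent stars and yields only the first term $\lambda^{-1}\mathbb{E}\log\mathbb{E}'[(\sum_x\prod_{a=1}^d u_a(x))^{\lambda}]$; the subtracted term $-(k-1)\alpha\lambda^{-1}\mathbb{E}\log\mathbb{E}'[u_0^{\lambda}]$ does not appear from a ``direct product computation'' at that endpoint. In the actual interpolation it must either be built into the hybrid model as compensating clause factors acting on i.i.d.\ cavity replicas, or accumulated as the per-step cost of each clause insertion (i.e.\ the increments are bounded by the clause term, not by zero). As written, your telescoping $\Phi(m)\le\Phi(0)=\mathcal{P}(\zeta,\lambda)$ does not account for where the second term of $\mathcal{P}$ comes from, so the argument as stated would not produce the functional in \eqref{eq:1rsb}.
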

\begin{remark}
\cite[Theorem E.3]{sly_sun_zhang_2021} is stated more general than Theorem \ref{thm:ssz} by considering independent \textit{external field} $\{h_v\}_{v\in V}$ and random $(b_s)_{s\in S}$. For our purposes, it suffices to consider non-random $b_s\geq 0$ and $h_v\equiv 1$.
\end{remark}
We use Theorem \ref{thm:ssz} for the positive temperature models in Definition \ref{def:pos}. Note that $\theta_{\textsc{nae}}(\cdot\,;\,\beta)$ and $\theta_{\textsc{col}}(\cdot\,;\,\beta)$ satisfies the condition \eqref{eq:theta:finite:temp} with $\eps=e^{-\beta}$. Furthermore, in the bound \eqref{eq:1rsb}, we take $\lambda = \beta^{-1/2}$ and $\zeta\equiv \zeta_{k,d,\beta} \in \mathscr{P}\Big(\mathscr{P}\big(\mathscr{P}(\{0,1\})\big)\Big)$ given by a point mass at $\eta_{k,d,\beta}$:
\begin{equation}\label{eq:def:zeta}
    \zeta_{k,d,\beta} \equiv \delta_{\eta_{k,d,\beta}}\,,
\end{equation}
where $\eta_{k,d,\beta}\in \mathscr{P}(\mathscr{P}(\{0,1\}))$ is defined as follows. Identify $\mathscr{P}(\{0,1\})$ with $[0,1]$ by the map 
\[\rho\in \mathscr{P}(\{0,1\})\leftrightarrow \rho(1)\in [0,1]\,.
\]
Thus, viewing $\eta\equiv \eta_{k,d,\beta} \in \mathscr{P}([0,1])$, define
\begin{equation}\label{eq:def:eta}
    \eta \left(\frac{e^{\beta}}{e^{\beta} + e^{-\beta}} \right)=\eta \left(\frac{e^{-\beta}}{e^{\beta} + e^{-\beta}} \right)=x\,, \quad \eta\left(\frac{1}{2} \right) = 1-2x\,,
\end{equation}
where $x\equiv x(k,d)$ is the \textsc{bp} fixed point, i.e. the solution to the equation \eqref{eq:BP}. Such choice $\zeta_{k,d,\beta}$ is motivated from physics \cite{kmrsz07} and previous mathematical works \cite[Section 3]{dss16} and \cite[Section 4]{DSS22}.

Before proceeding further, we show that if $\zeta$ is given as in \eqref{eq:def:zeta}, \eqref{eq:def:eta}, then $\mathcal{P}(\zeta, \lambda)$ does not depend on literals. More precisely, suppose that $\zeta= \delta_{\eta_0}$, where $\eta_0 \in \mathscr{P}([0,1])$ is such that $\eta_0(\mathrm{d}x)=\eta_0(\mathrm{d}(1-x))$, i.e. $\rho \stackrel{d}{=}1-\rho$ holds for $\rho \sim \eta_0$. For a fixed $\uL=(\tL_i)_{i\leq k}\in \{0,1\}^k$, let
\begin{equation*}
\theta_{\uL}(\ux)= (1-e^{-\beta} )\cdot \left(\prod_{i=1}^{k} (\tL_i \oplus x_i) + \prod_{i=1}^{k} (\tL_i \oplus x_i \oplus 1)\right)\,.
\end{equation*}
With abuse of notation, for $x\in \{0,1\}$ and independent samples $\rho_{a,j}\in \mathscr{P}(\{0,1\})$ from $\eta_{0}$, let
\begin{equation*}
 u_{a,\uL}(x) \equiv \sum_{\underline{x} \in \{0,1\}^k} \one \{x_1 = x \} \big(1 - \theta_{\uL} (\underline{x})\big) \prod_{j=2}^{k} \rho_{a,j} (x_j)\,,\quad\quad u_{\uL}\equiv \sum_{\underline{x} \in \{0,1\}^k} \big(1 - \theta_{\uL} (\underline{x})\big) \prod_{j=1}^{k} \rho_{a,j} (x_j)\,,
\end{equation*}
where we consider $\uL\in \{0,1\}^k$ to be fixed. Then, for a given sequence of literals $\uL_a \in \{0,1\}^k$ for $0\leq a \leq d$, let
\begin{equation}\label{eq:parisi:literals}
\mathcal{P}\big(\delta_{\eta_0}, \lambda; (\uL_a)_{0\leq a \leq d} \big):=\lambda^{-1} \log \mathbb{E^\prime} \Big( \sum_{x \in \{0,1\}} \prod_{a=1}^{d} u_{a,\uL_a}(x) \Big)^{\lambda}  - (k-1)\alpha \lambda^{-1} \mathbb{E} \log \mathbb{E^\prime}  \left(u_{\uL_0} \right)^{\lambda}\,,
\end{equation}
where $\E^\prime$ is the expectation with respect to the independent samples $\rho_{a,j}\in \mathscr{P}(\{0,1\})$ from $\eta_{0}$. Note that if $\uL_a \stackrel{i.i.d}{\sim} \textnormal{Unif}(\{0,1\}^k)$, then $\mathcal{P}_{\theta_{\textsc{nae}}}(\delta_{\eta_0}, \lambda)=\E_{\uL} \mathcal{P}\big(\delta_{\eta_0}, \lambda; (\uL_a)_{0\leq a \leq d} \big)$ holds, and if $\uL_a \equiv \underline{0}$ for $0\leq a \leq d$, then $\mathcal{P}_{\theta_{\textsc{col}}}(\delta_{\eta_0}, \lambda)=\mathcal{P}\big(\delta_{\eta_0}, \lambda; \underline{0}\big)$ holds. The following lemma then clarifies the mechanism behind the identical satisfiability upper bound in Theorem \ref{thm:main}.
\begin{lemma}\label{lem:symmetry}
Consider $\zeta=\delta_{\eta_0}$ for some $\eta_0 \in \mathscr{P}([0,1])$ such that $\eta_0(\mathrm{d}x)=\eta_0(\mathrm{d}(1-x))$. Then, for any literals $\uL_a \in \{0,1\}^k$ for $0\leq a \leq d$, the value $\mathcal{P}\big(\delta_{\eta_0}, \lambda; (\uL_a)_{0\leq a \leq d} \big)$ does not depend on $(\uL_a)_{0\leq a \leq d}$. Thus, $\mathcal{P}_{\theta_{\textsc{nae}}}(\delta_{\eta_0}, \lambda)= \mathcal{P}_{\theta_{\textsc{col}}}(\delta_{\eta_0}, \lambda)$ holds.
\end{lemma}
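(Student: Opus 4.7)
The plan is to exploit two symmetries built into the hypothesis $\eta_0(dx)=\eta_0(d(1-x))$: a \emph{per-coordinate} flip in each $\rho_{a,j}$ that lets us ``gauge away'' each literal $\tL_j$ for $j\ge 2$, and then a \emph{global} flip that lets us symmetrize the remaining literal $\tL_1$ that selects the distinguished coordinate $x_1$ in $u_{a,\uL}(x)$.

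First, I would rewrite everything in closed form. Observe that $\theta_{\uL}(\ux)$ is supported on the two configurations $\ux=\uL$ and $\ux=\neg\uL$ with value $c:=1-e^{-\beta}$, so
\[
u_{a,\uL}(x)=1-c\,\one\{x=\tL_1\}\prod_{j=2}^{k}\rho_{a,j}(\tL_j)-c\,\one\{x=\tL_1\oplus 1\}\prod_{j=2}^{k}\rho_{a,j}(\tL_j\oplus 1),
\]
using that $\sum_{x_2,\dots,x_k}\prod_{j\ge 2}\rho_{a,j}(x_j)=1$; and $u_{\uL}=1-c\prod_{j=1}^{k}\rho_{a,j}(\tL_j)-c\prod_{j=1}^{k}\rho_{a,j}(\tL_j\oplus 1)$.

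Second, for each $(a,j)$ define the ``gauge-transformed'' measure $\rho'_{a,j}(s):=\rho_{a,j}(s\oplus\tL_j)$. Under the identification $\rho\leftrightarrow\rho(1)\in[0,1]$, $\rho'_{a,j}$ is either $\rho_{a,j}$ (when $\tL_j=0$) or $1-\rho_{a,j}$ (when $\tL_j=1$); in either case the hypothesis $\eta_0(dx)=\eta_0(d(1-x))$ guarantees $\rho'_{a,j}\sim\eta_0$, independently across $(a,j)$. Substituting $\rho_{a,j}(\tL_j)=\rho'_{a,j}(0)$ and $\rho_{a,j}(\tL_j\oplus 1)=\rho'_{a,j}(1)$ into the formula above yields
\[
u_{a,\uL}(x)\stackrel{d}{=}u_{a,\underline{0}}(x\oplus\tL_1)\quad\text{and}\quad u_{\uL}\stackrel{d}{=}u_{\underline{0}},
\]
jointly over $a$ (with the $\rho'_{a,j}$ fresh iid $\eta_0$-samples). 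In particular the second term $\E\log\E'(u_{\uL_0})^{\lambda}$ is immediately shown to be independent of $\uL_0$.

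Third, to eliminate the residual $\tL_{a,1}$ in the first term I would apply a \emph{global} swap inside each clause: set $\rho''_{a,j}(s):=\rho'_{a,j}(s\oplus 1)$ for all $j\ge 2$ simultaneously. Since $\eta_0$ is symmetric, $\rho''_{a,j}\sim\eta_0$ iid, and with $\rho''$ in place of $\rho'$ the roles of $u_{a,\underline{0}}(0)$ and $u_{a,\underline{0}}(1)$ are interchanged. Hence the joint law of $(u_{a,\underline{0}}(0),u_{a,\underline{0}}(1))$ is invariant under the swap $(u_0,u_1)\mapsto(u_1,u_0)$. Because the clauses $a=1,\dots,d$ are independent, the joint distribution of $\bigl(u_{a,\underline{0}}(x\oplus\tL_{a,1})\bigr)_{a,x}$ coincides with that of $\bigl(u_{a,\underline{0}}(x)\bigr)_{a,x}$, so
\[
\sum_{x\in\{0,1\}}\prod_{a=1}^{d}u_{a,\uL_a}(x)\;\stackrel{d}{=}\;\sum_{x\in\{0,1\}}\prod_{a=1}^{d}u_{a,\underline{0}}(x),
\]
which shows that the first term of $\mathcal P\bigl(\delta_{\eta_0},\lambda;(\uL_a)_{0\le a\le d}\bigr)$ is also independent of the literals. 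Combining the two pieces proves $\mathcal P(\delta_{\eta_0},\lambda;(\uL_a))=\mathcal P(\delta_{\eta_0},\lambda;\underline{0})$, and averaging over $\uL_a\stackrel{iid}\sim\mathrm{Unif}(\{0,1\}^k)$ gives the claimed equality $\mathcal P_{\theta_{\textsc{nae}}}=\mathcal P_{\theta_{\textsc{col}}}$.

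There is no real obstacle here beyond bookkeeping; the only subtle point is that after the per-coordinate gauge one still has the \emph{scalar} literal $\tL_{a,1}$ left, and one must notice that the problem remaining has a $\ZZ/2$-symmetry that is itself a consequence of the same symmetry of $\eta_0$ applied simultaneously to all $\rho_{a,j}'$ within a clause. Independence across clauses is what makes this local swap suffice globally.
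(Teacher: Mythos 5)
Your argument is correct and is essentially the paper's proof in a slightly different packaging: independence across clauses reduces the claim to a single clause, the per-coordinate gauge $\rho_{a,j}(\cdot)\mapsto\rho_{a,j}(\cdot\oplus\tL_j)$ uses the symmetry of $\eta_0$ exactly as in the paper, and your final within-clause global flip to remove $\tL_{a,1}$ is the same mechanism as the paper's use of $\theta_{\underline{0}}(\ux)=\theta_{\underline{0}}(\ux\oplus\underline{1})$ (the composition of your two flips is precisely the paper's substitution $x_j\mapsto x_j\oplus\tL_1\oplus\tL_j$). No gaps.
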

\begin{proof}
For fixed $\uL_a\in \{0,1\}^k$ for $0\leq a \leq d$, note that the vectors $\big(u_{a,\uL_a}(0),u_{a,\uL_a}(1)\big)$ are independent for $0\leq a \leq d$. Thus, it suffices to show that for given $\uL, \uL^\prime \in \{0,1\}^k$ and $1\leq a \leq d$, 
\begin{equation}\label{eq:lem:symmetry:goal}
u_{\uL}\stackrel{d}{=}u_{\uL^\prime}\quad\textnormal{and}\quad \big(u_{a, \uL}(0),u_{a, \uL}(1)\big)\stackrel{d}{=}\big(u_{a, \uL^\prime}(0),u_{a, \uL^\prime}(1)\big)\,.
\end{equation}
To this end, let $ \uL^\prime= \underline{0}$ and we first prove that $u_{\uL}\stackrel{d}{=}u_{\underline{0}}$ holds. Since $\theta_{\uL}(\ux)=\theta_{\underline{0}}(\ux\oplus \uL)$,
\begin{equation*}
    u_{\uL} \equiv \sum_{\underline{x} \in \{0,1\}^k} \big(1 - \theta_{\uL} (\underline{x})\big)\prod_{j=1}^{k} \rho_{0,j} (x_j)= \sum_{\underline{x} \in \{0,1\}^k} \big(1 - \theta_{\underline{0}} (\underline{x})\big) \prod_{j=1}^{k} \rho_{0,j} (x_j \oplus \tL_j)\,. 
\end{equation*}
Note that since $(\rho_{0,j})_{1\leq j \leq k}$ are i.i.d. samples from $\eta_0$ and $\eta_0(\mathrm{d} x)=\eta_0(\mathrm{d}(1-x))$ holds, the sequence $\big(\rho_{0,j}(\cdot\oplus \tL_j)\big)_{1\leq j \leq k}$ are also i.i.d. from $\eta_0$. Hence, the equation above shows that $u_{\uL}\stackrel{d}{=}u_{\underline{0}}$ holds.

Next, we prove that $\big(u_{a, \uL}(0),u_{a, \uL}(1)\big)\stackrel{d}{=}\big(u_{a, \underline{0}}(0),u_{a, \underline{0}}(1)\big)$ holds. Without loss of generality, let $a=1$. Again since $\theta_{\uL}(\ux)=\theta_{\underline{0}}(\ux\oplus \uL)$,
\begin{align*}
    u_{1,\uL}(x) &\equiv \sum_{\underline{x} \in \{0,1\}^k} \one \{x_1 = x \} \big(1 - \theta_{\uL} (\underline{x})\big) \prod_{j=2}^{k} \rho_{1,j} (x_j)= \sum_{\underline{x} \in \{0,1\}^k} \one \{x_1 \oplus \tL_1 = x \} \big(1 - \theta_{\underline{0}} (\underline{x})\big) \prod_{j=2}^{k} \rho_{1,j} (x_j \oplus \tL_j)
\end{align*}
Now, observe that $\theta_{\underline{0}}(\cdot)$ is invariant under global flip, i.e. $\theta_{\underline{0}}(x)=\theta_{\underline{0}}(x\oplus 1)$. Thus, it follows that
\begin{align*}
    u_{1,\uL}(x) &= \sum_{\underline{x} \in \{0,1\}^k} \one  \{x_1 = x \} [1 - \theta_{\underline{0}} (\underline{x})] \prod_{j=2}^{k} \rho_{1,j} (x_j \oplus \tL_1 \oplus \tL_j)\,.
\end{align*}
By the same reasons as above, $\big(\rho_{1,j}(\cdot\oplus \tL_1\oplus \tL_j)\big)_{2\leq j \leq k}$ have the same distribution as $\big(\rho_{1,j}\big)_{2\leq j \leq k}$, which are i.i.d. from $\eta_0$. Thus, we have that $\big(u_{1, \uL}(0),u_{1, \uL}(1)\big)\stackrel{d}{=}\big(u_{1, \underline{0}}(0),u_{1, \underline{0}}(1)\big)$. Therefore, \eqref{eq:lem:symmetry:goal} holds, which concludes the proof.
\end{proof}
The following lemma relates $\mathcal{P}_{\theta_{\textsc{col}}}(\zeta_{k,d,\beta},\beta^{-1/2})=\mathcal{P}_{\theta_{\textsc{nae}}}(\zeta_{k,d,\beta},\beta^{-1/2}),$ and $\boldsymbol{\prescript{\star}{}{\Phi}}(d)$, which plays a crucial role in proving Proposition \ref{prop:interpolation}. Recall the definition of $\zeta_{k,d,\beta}$ in \eqref{eq:def:zeta} and \eqref{eq:def:eta}.
\begin{lemma}\label{lem:crucial}
$\mathcal{P}_{\theta_{\textsc{col}}}(\zeta_{k,d,\beta},\beta^{-1/2})\leq C+\beta^{1/2} \cdot \boldsymbol{\prescript{\star}{}{\Phi}}(d)$ holds for some constant $C\in \R$, which does not depend on $\beta>0$.
\end{lemma}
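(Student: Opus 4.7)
My plan is to substitute $\zeta=\zeta_{k,d,\beta}$ and $\lambda=\beta^{-1/2}$ directly into the functional $\mathcal{P}_{\theta_{\textsc{col}}}$ from Theorem~\ref{thm:ssz} and perform a large-$\beta$ asymptotic expansion. Since $\zeta_{k,d,\beta}$ is a point mass at $\eta_{k,d,\beta}$, the outer $\E$ trivializes, and the samples $\rho_{a,j}\in\mathscr{P}(\{0,1\})$ are i.i.d.\ with $\rho_{a,j}(1)\in\{p_+,p_-,1/2\}$ of probabilities $(x,x,1-2x)$, where $p_\pm := e^{\pm\beta}/(e^\beta+e^{-\beta})$. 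Direct calculation using \eqref{eq:theta:col} gives
\begin{align*}
u_a(x) &= 1-(1-e^{-\beta})\prod_{j=2}^k \rho_{a,j}(x)\,,\\
u_0 &= 1-(1-e^{-\beta})\Big[\prod_{j=1}^k \rho_{0,j}(0)+\prod_{j=1}^k \rho_{0,j}(1)\Big]\,,
\end{align*}
with $u_a(x),u_0\in[e^{-\beta},1]$ (the lower bound because $\sum_{s}\prod_j \rho_{\cdot,j}(s)\le 1$ by a union bound on two disjoint events).

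The central technical step is an auxiliary bound: for any $\E'$-random variable $W\in[e^{-c_0\beta},2]$ which satisfies $W\ge c>0$ uniformly on some ``bulk'' event $\mathcal{B}$ for all $\beta\ge\beta_0$, and $W\le C_0 e^{-\beta}$ on $\mathcal{B}^c$, I will show
\[
\lambda^{-1}\log\E'[W^\lambda]\le\beta^{1/2}\log\P(\mathcal{B})+O(1),
\]
where $O(1)$ depends only on $k,d$. This follows from the two-sided estimate $c^\lambda\P(\mathcal{B})\le\E'[W^\lambda]\le 2^\lambda\P(\mathcal{B})+(C_0 e^{-\beta})^\lambda\P(\mathcal{B}^c)$, absorbing $c^\lambda,2^\lambda,C_0^\lambda=1+O(\lambda)$ into the constant and using $\lambda^{-1} e^{-\beta\lambda}=\beta^{1/2}e^{-\beta^{1/2}}=o(1)$, provided $\P(\mathcal{B})$ stays bounded from below.

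Next, I will identify $\P(\mathcal{B})$ for both terms by a finite case analysis on the trinary draws of $\rho_{a,j}(1)$. For the clause term $W=u_0$, let $(N_+,N_-,N_0)$ count $\{\rho_{0,j}(1)\}_{j=1}^k$ taking values $p_+,p_-,1/2$; enumerating the multinomial outcomes, $u_0$ stays bounded from $0$ unless $N_+=k$ or $N_-=k$ (the only cases in which $\prod_j\rho(0)+\prod_j\rho(1)\to 1$), giving $\P(\mathcal{B}_{\textnormal{cla}})=1-2x^k$. For the variable term $W=\sum_{x\in\{0,1\}}\prod_{a=1}^d u_a(x)$, the factor $\prod_a u_a(1)$ drops to order $e^{-\beta}$ iff some clause $a$ has all $k-1$ of its other neighbors frozen to $p_+$, occurring with probability $1-(1-x^{k-1})^d$; symmetrically for $\prod_a u_a(0)$. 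Since the sum stays bounded from $0$ iff at least one product does, inclusion--exclusion gives $\P(\mathcal{B}_{\textnormal{var}})=2(1-x^{k-1})^d-(1-2x^{k-1})^d$, which is positive on $x\in[1/2-2^{-k},1/2]$.

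Finally, I will invoke the \textsc{bp} equation \eqref{eq:BP}, equivalently $\big(\tfrac{1-2x^{k-1}}{1-x^{k-1}}\big)^{d-1}=\tfrac{1-2x}{1-x}$, to derive the algebraic identity
\[
2(1-x^{k-1})^d-(1-2x^{k-1})^d=\frac{(1-2x^k)(1-x^{k-1})^{d-1}}{1-x}\,,
\]
yielding $\log\P(\mathcal{B}_{\textnormal{var}})=\log(1-2x^k)-\log(1-x)+(d-1)\log(1-x^{k-1})$. Substituting $(k-1)\alpha=d-d/k$, the coefficient of $\log(1-2x^k)$ in $\log\P(\mathcal{B}_{\textnormal{var}})-(k-1)\alpha\log\P(\mathcal{B}_{\textnormal{cla}})$ becomes $1-(d-d/k)=-d(1-k^{-1}-d^{-1})$, matching $\boldsymbol{\prescript{\star}{}{\Phi}}(d)$ in \eqref{eq:def:Phi}. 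The main obstacle is the case-analytic step above: one must pin down explicit uniform constants $c,c_0,C_0$ across the finitely many multinomial outcomes --- in particular verifying $u_a(x),u_0\ge c>0$ on the bulk event (the worst cases being $u_0\approx 1/2$ and $\prod_a u_a(x)\approx 2^{-d}$) and that every non-bulk outcome genuinely forces $W=O(e^{-\beta})$ rather than being merely small, a property that ultimately rests on the soft-constraint bound \eqref{eq:theta:finite:temp}.
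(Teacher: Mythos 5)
Your proposal is correct and follows essentially the same route as the paper: the same bulk/non-bulk decomposition of the $\lambda$-moments, the same frozen-configuration events with probabilities $1-2x^k$ and $2(1-x^{k-1})^d-(1-2x^{k-1})^d$, the same use of the \textsc{bp} identity, and the same coefficient matching with $\boldsymbol{\prescript{\star}{}{\Phi}}(d)$. Just make sure that for the clause term (which enters $\mathcal{P}$ with a negative sign) you invoke the lower-bound side $c^\lambda\P(\mathcal{B})\le\E'[W^\lambda]$ of your two-sided estimate rather than the upper bound stated in your auxiliary lemma; this is exactly what the paper does.
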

\begin{proof}
Throughout, let $(\rho_{a,j})_{a\geq 0, j\geq 0}$ denote i.i.d. samples from $\eta_{k,d,\beta}$ defined in \eqref{eq:def:eta}, and let $\E^{\prime}$ (resp. $\P^\prime$) denote the expectation (resp. probability) with respect to $(\rho_{a,j})_{a\geq 0, j\geq 0}$. Also, we use the generic notation $C$ by a constant that does not depend on $\beta>0$. Note that since $\theta_{\textsc{col}}$ and $\eta_{k,d,\beta}$ are non-random, the outer expectation $\E$ in the definition of $\mathcal{P}(\zeta, \lambda)$ in \eqref{eq:1rsb} is redundant. 

First, we bound the second term of the definition of $\mathcal{P}_{\theta_{\textsc{col}}}(\zeta_{k,d,\beta},\beta^{-1/2})$ in \eqref{eq:1rsb}:
\begin{align*}
(k-1)\alpha \beta^{1/2}  \log \mathbb{E^\prime} \Big[ \left(u_0 \right)^{\beta^{-1/2}} \Big]
    &= (k-1)\alpha \beta^{1/2}\log \mathbb{E^\prime} \Bigg[ \bigg(1 - (1-e^{-\beta})\bigg( \prod_{j=1}^{k} \rho_{0,j} (0)+\prod_{j=1}^{k} \rho_{0,j} (1) \bigg) \bigg)^{\beta^{-1/2}} \Bigg]
\end{align*}
Note that the expectation inside the log in the right hand side above is bounded below by
\[
2^{-\beta^{-1/2}}\cdot \mathbb{P^\prime} \bigg( 1 - (1-e^{-\beta}) \bigg( \prod_{j=1}^{k} \rho_{0,j} (0)+\prod_{j=1}^{k} \rho_{0,j} (1) \bigg) \ge \frac{1}{2} \bigg)= 2^{-\beta^{-1/2}}(1-2x^k)\,,
\]
where $x$ is the solution to the \textsc{bp} equation \eqref{eq:BP} and the equality holds for large enough $\beta\geq \beta_0$ since for large $\beta$ and $k \ge 3,$ $(1-e^{-\beta})\left( \prod_{j=1}^{k} \rho_{0,j} (0)+\prod_{j=1}^{k} \rho_{0,j}(1)\right) \ge \frac{1}{2}$ holds if and only if either $\rho_{0,j}(1) = \frac{e^{\beta}}{e^{\beta}+e^{-\beta}}$ holds for all $1\leq j \leq k$, or $\rho_{0,j}(1) = \frac{e^{-\beta}}{e^{\beta}+e^{-\beta}}$ holds for all $1\leq j \leq k$. Thus, it follows that
\begin{equation}\label{eq:secondterm:bound}
    -(k-1)\alpha \lambda^{-1} \mathbb{E} \log \mathbb{E^\prime} \left[ \left(u_0 \right)^{\lambda} \right] \le C - \beta^{1/2} (k-1)\alpha \log\big(1-2x^k\big)\,.
\end{equation}
Next, we estimate the first term of the definition of $\mathcal{P}_{\theta_{\textsc{col}}}(\zeta_{k,d,\beta},\beta^{-1/2})$ in \eqref{eq:1rsb}, which equals
\begin{equation}\label{eq:first:term:parisi}
\begin{split}
&\beta^{1/2} \log \mathbb{E^\prime} \bigg[ \Big( \sum_{x \in \{0,1\}} \prod_{a=1}^{d} u_a(x) \Big)^{\beta^{-1/2}} \bigg]\\
&=
 \beta^{1/2} \log \mathbb{E^\prime} \left[ \Bigg( \prod_{a=1}^{d} \bigg(1 - (1-e^{-\beta})\prod_{j=2}^{k}\rho_{a,j}(0) \bigg) +\prod_{a=1}^{d}\bigg(1 - (1-e^{-\beta})\prod_{j=2}^{k}\rho_{a,j}(1) \bigg)\Bigg)^{\beta^{-1/2}} \right]
\end{split}
\end{equation}
We upper bound the expectation inside the log in the above expression by
\[\begin{split}
2^{\beta^{-1/2}} \cdot \mathbb{P^\prime}\big(\mathcal{A}\big) +  \left(3e^{-\beta} \right)^{\beta^{-1/2}}\,,
\end{split}\]
where
\[
\mathcal{A}:=\left\{ \prod_{a=1}^{d} \bigg(1 - (1-e^{-\beta})\prod_{j=2}^{k}\rho_{a,j}(0) \bigg) +\prod_{a=1}^{d}\bigg(1 - (1-e^{-\beta})\prod_{j=2}^{k}\rho_{a,j}(1) \bigg)  \ge 3e^{-\beta} \right\}\,.
\]

Define the events $\mathcal{E}_0$ and $\mathcal{E}_1$ involving $(\rho_{a,j})_{1\leq a\leq d, 2\leq j \leq k}$ as follows.
\begin{itemize}
    \item $\mathcal{E}_0$ is the event such that for any $1\leq a\leq d$, there exists $j \in \{2,\ldots, k\}$ such that $\rho_{a,j}(0) \neq \frac{e^{\beta}}{e^{\beta}+e^{-\beta}}$.
    
    \item $\mathcal{E}_1$ is the event such that for any $1\leq a\leq d$, there exists $j \in \{2,\ldots, k\}$ such that $\rho_{a,j}(1) \neq \frac{e^{\beta}}{e^{\beta}+e^{-\beta}}$.
\end{itemize}
We now claim that for large enough $\beta$, the event $\mathcal{A}$ is included in $\mathcal{E}_0\cup \mathcal{E}_1$. To this end, suppose that the event $(\mathcal{E}_0 \cup \mathcal{E}_1)^{\sf c}= \mathcal{E}_0^{\sf c} \cap \mathcal{E}_1^{\sf c}$ holds. Then, for each $x\in \{0,1\}$,  for some $a\equiv a(x) \in \{1,\ldots, d\}$ such that $\rho_{a,j} (x)=\frac{e^{\beta}}{e^{\beta}+e^{-\beta}}$ holds for all $2 \le j\le k$. Thus, for $x\in \{0,1\}$, we have
    \[\prod_{a=1}^d \bigg(1-(1-e^{-\beta})\prod_{j=2}^k \rho_{a,j} (x)\bigg) \le 1-(1-e^{-\beta})\left( \frac{e^{\beta}}{e^{\beta}+e^{-\beta}} \right)^{k-1}\leq e^{-\beta}+e^{-2\beta}\,,\]
    where the last inequality holds for large enough $\beta\geq \beta_k$. Hence, summing over $x\in \{0,1\}$ gives that the event $\mathcal{A}$ cannot hold, which proves our claim that $\mathcal{A}\subset \mathcal{E}_0\cup \mathcal{E}_1$. Consequently, the term \eqref{eq:first:term:parisi} is bounded above by 
    \begin{align*}
    \beta^{1/2} \log \left(2^{\beta^{-1/2}} \cdot \P^\prime\big(\mathcal{E}_0 \cup \mathcal{E}_1\big)   +  (3e^{-\beta})^{\beta^{-1/2}}\right)\leq \beta^{1/2 }\log\P^\prime\big(\mathcal{E}_0 \cup \mathcal{E}_1\big) +C\,.
    \end{align*}
    Note that $\P^\prime\big(\mathcal{E}_0 \cup \mathcal{E}_1\big)$ can be calculated explicitly by
    \[
    \P^\prime(\mathcal{E}_0\cup \mathcal{E}_1)= 2(1-x^{k-1})^d - (1-2x^{k-1})^d = \frac{(1-x^{k-1})^{d-1} (1-2x^k)}{1-x}\,,
    \]
    where in the final equality, we used the fact that $x$ is the solution to the equation \eqref{eq:BP}. Therefore, we have proven that
    \begin{equation}\label{eq:firstterm:bound}
        \beta^{1/2} \log \mathbb{E^\prime} \bigg[ \Big( \sum_{x \in \{0,1\}} \prod_{a=1}^{d} u_a(x) \Big)^{\beta^{-1/2}} \bigg]\leq C + \beta^{1/2} \left( -\log(1-x) + (d-1) \log(1-x^{k-1}) + \log(1-2x^k) \right)\,.
    \end{equation}
    In conclusion, combining \eqref{eq:secondterm:bound} and \eqref{eq:firstterm:bound}, and recalling the definition of $\boldsymbol{\prescript{\star}{}{\Phi}}(d)$ in \eqref{eq:def:Phi}, we have 
    \begin{align*}
        \mathcal{P}_{\theta_{\textsc{col}}}(\zeta_{k,d,\beta},\beta^{-1/2}) 
        &\le C + \beta^{1/2} \boldsymbol{\prescript{\star}{}{\Phi}}(d)\,,
    \end{align*} 
which concludes the proof.
\end{proof}

\begin{proof}[Proof of Proposition \ref{prop:interpolation}]
Given a \textsc{nae-sat} instance $\bGG$, let $\textsf{SOL}(\bGG)\subset \{0,1\}^V$ denotes the set of \textsc{nae-sat} solutions. Also, let $Z_{\beta,\textsc{nae}}(\bGG)$ denotes the partition function \eqref{eq:partition} for $\theta=\theta_{\textsc{nae}}(\cdot\,;\,\beta)$. Note that if $\ux \in \textsf{SOL}(\bGG)$, then $\theta_{\textsc{nae}}(\ux_{\delta a})=0$ for any $a\in F$, thus we have for any $\beta>0$ that
\begin{equation}\label{eq:tech}
Z_{\beta,\textsc{nae}}(\bGG)\equiv \sum_{\ux\in \{0,1\}^V}\prod_{a\in F}\big(1-\theta_{\textsc{nae}}(\ux_{\delta a};\beta)\big)\geq \left|\textsf{SOL}(\bGG)\right|\,.
\end{equation}
On the other hand, since $\theta_{\textsc{nae}}(\cdot\,;\,\beta)$ satisfies the condition \eqref{eq:theta:finite:temp} with $\eps=e^{-\beta}$, we have by Theorem \ref{thm:ssz} that
\begin{equation*}
\frac{1}{n}\E\Big[\log Z_{\beta,\textsc{nae}}(\bGG)\Big]\leq \mathcal{P}_{\theta_{\textsc{nae}}}(\zeta_{k,d,\beta}, \beta^{-1/2})+o_n(1)=\mathcal{P}_{\theta_{\textsc{col}}}(\zeta_{k,d,\beta}, \beta^{-1/2})+o_n(1)\,,
\end{equation*}
where the last equality is due to Lemma \ref{lem:symmetry}. By Lemma \ref{lem:crucial}, the right hand side is further bounded by
\begin{equation*}
\frac{1}{n}\E\Big[\log Z_{\beta,\textsc{nae}}(\bGG)\Big]\leq \beta^{1/2}\cdot \boldsymbol{\prescript{\star}{}\Phi}(d)+C+o_n(1)\,,
\end{equation*}
for some constant $C$ that does not depend on $n$ nor $\beta$. If $\boldsymbol{\prescript{\star}{}\Phi}(d)<0$, then for large enough $\beta>0$, $\beta^{1/2}\cdot \boldsymbol{\prescript{\star}{}\Phi}(d)+C<-1$ holds, thus $n^{-1}\E\big[\log Z_{\beta,\textsc{nae}}(\bGG)\big]<-1$ holds for large enough $n$. For such $\beta=\beta_0(k,d)>0$, we have by \eqref{eq:tech} and Lemma \ref{lem:free:energy:concentration} that for large enough $n$,
\begin{equation*}
\P\Big(\left|\textsf{SOL}(\bGG)\right|\geq 1\Big)\leq \P\bigg(\bigg|\frac{1}{n}\log Z_{\beta_0,\textsc{nae}}(\bGG)-\frac{1}{n}\E\Big[\log Z_{\beta_0,\textsc{nae}}(\bGG)\Big]\bigg|\geq 1\bigg)\leq e^{-cn}\,,
\end{equation*}
for some constant $c$ that depends only on $\beta_0>0$, which finishes the proof for the \textsc{nae-sat} model. 

Given a configuration model $\bG$, let $Z_{\beta,\textsc{col}}(\bG)$ denote the partition function \eqref{eq:partition} for $\theta=\theta_{\textsc{col}}(\cdot\,;\,\beta)$. Then, by the same reasoning, Theorem \ref{thm:ssz} and Lemma \ref{lem:crucial} shows that if $\boldsymbol{\prescript{\star}{}\Phi}(d)<0$ then $\frac{1}{n}\E\big[\log Z_{\beta,\textsc{col}}(\bG) \big]<-1$ holds for large enough $\beta=\beta_0(k,d)>0$ and $n$ large enough. On the event that there exists a $2$-coloring on $\bG$, $Z_{\beta,\textsc{col}}(\bG)\geq 1$ holds, so Lemma \ref{lem:free:energy:concentration} again concludes the proof.
\end{proof}
\subsection{Proof of Lemma \ref{lem:free:energy:concentration}}\label{subsec:lemma}
Recall that $\bG=(V,F,\bE)$ is generated from the configuration model, where the $\bE$ is drawn uniformly from $S_{nd}$. Thus, $\bE$ has the same law as sequentially drawing random clauses $\ba_1,\ldots, \ba_{m}$ as follows. At times $t\in \{1,\ldots,k\}$, clause $\ba_t$ is drawn by connecting the $k$ adjacent half-edges to previously unmatched half-edges adjacent to variables. For $1\leq t \leq m$, let $\FF_t$ be the $\sigma$-algebra generated by $\ba_1,\ldots, \ba_t$, and $\FF_0\equiv \emptyset$. Denote $M_t\equiv \E\big[\log Z(\bG)\mid \FF_t\big]$ by the associated Doob martingale. Note that if $\bG =(V,F,E)$ and $\bG^\prime =(V,F,E^\prime)$ has the the same set of edges except for those adjacent to two clauses $a_1\neq a_2\in F$, then by our assumption of $\theta$ in \eqref{eq:theta:finite:temp} and the definition of $Z(G)$ in \eqref{eq:partition}, it follows that $\eps^2\leq Z(G)/Z(G^\prime)\leq \eps^{-2}$ holds. Thus, we have for every $t\in \{0,1,\ldots,m-1\}$ that
\begin{equation}\label{eq:bounded:diff}
\Big|M_{t+1}-M_t\Big|\equiv \Big|\E\big[\log Z(\bG)\mid \FF_{t+1}\big]-\E\big[\log Z(\bG)\mid \FF_t\big]\Big|\leq 2 \log\big(1/\eps\big)\,,
\end{equation}
from which Lemma \ref{lem:free:energy:concentration} follows.
\begin{proof}[Proof of Lemma \ref{lem:free:energy:concentration}]
Note that $M_m =\log Z(\bG)$ and $M_0 = \E\big[\log Z(\bG)\big]$ holds and $(M_t)_{0\leq t\leq m}$ is a martingale with bounded difference by \eqref{eq:bounded:diff}. Therefore, the conclusion follows from Azuma Hoeffding's inequality. 
\end{proof}
\subsection{Proof of Lemma \ref{lem:firstmo:coloring}}
\label{subsec:firstmo}
The following notations are convenient for the proof of Lemma \ref{lem:firstmo:coloring}. For non-negative quantities $f=f_{d,k, n}$ and $g=g_{d,k,n}$, we use any of the equivalent notations $f=O_{k,d}(g), g= \Omega_{k,d}(f), f\lesssim_{k,d} g$ and $g \gtrsim_{k,d} f $ to indicate that there exists a constant $C_{k,d}$, which only depends on $k,d$ such that $f\leq C_{k,d}\cdot g$. We drop the subscripts $d$ (resp. $k,d$) if the constant $C_{k,d}$ does not depend on $d$ (resp. $k,d$). When $f\lesssim_{k,d} g$ and $g\lesssim_{k,d} f$, we write $f\asymp_{k,d} g$. Similarly when $f\lesssim g$ and $g\lesssim f$, we write $f \asymp g$.

Note that $\E Z_{\textsc{col}}$ is the sum over $\ux\in \{0,1\}^V$ of the probabilities that $\ux$ is a $2$-coloring on $\bG$. By symmetry, the probability of $\ux\in \{0,1\}^V$ being a $2$-coloring depends only on the number $n\gamma$ of nodes having color $1$, which we denote by $\bp_{\gamma}$. Thus, $\E Z_{\textsc{col}}=\sum_{\gamma} \binom{n}{n\gamma} \bp_{\gamma}$, where the sum is over $\gamma \in (0,1)$ such that $n\gamma \in \mathbb{Z}$. Moreover, we can express $\bp_{\gamma}$ as follows. Let $X_1,\ldots, X_m$ be i.i.d. $\textnormal{Binom}(k,\gamma)$ random variables and denote $\P_{\gamma}$ by the probability with repect to $(X_i)_{i\leq m}$. Then, we have
\begin{equation}\label{eq:local:clt}
\begin{split}
\bp_{\gamma}
&=\P_{\gamma}\Big(X_i\notin \{0,k\}~~~\textnormal{for all}~1\leq i \leq m\, \Big|\,\sum_{i=1}^{m} X_i= km\gamma\Big)\\
&\leq \frac{\P_{\gamma}\big(X_i\notin \{0,k\}~~~\textnormal{for all}~1\leq i \leq m \big)}{\P_{\gamma}\big(\sum_{i=1}^{m} X_i= km\gamma\big)}\lesssim_k \sqrt{m}(1-\gamma^k-(1-\gamma)^k)^m\,,
\end{split}
\end{equation}
where the last inequality is due to a Stirling's approximation. It follows that
\begin{equation}\label{eq:bound:firstmo}
\begin{split}
&\E Z_{\textsc{col}}\leq n^{O(1)}\sum_{\gamma} \exp\Big(n F_{\alpha}(\gamma)\Big)\,,\quad\textnormal{where}\\
&F_{\alpha}(\gamma):= H(\gamma)+\alpha \log\big(1-\gamma^k-(1-\gamma)^k\big)\,.
\end{split}
\end{equation}
Here, $H(\gamma)\equiv -\gamma \log \gamma-(1-\gamma) \log(1-\gamma)$ is the entropy of $\gamma$. Note that $\gamma \to \gamma^k+(1-\gamma)^k$ is uniquely minimized at $\gamma=1/2$. Further, the entropy $H(\gamma)$ is strictly concave and is maximized at $\gamma=1/2$. Thus, $\gamma\to F_{\alpha}(\gamma)$ is uniquely maximized at $\gamma=1/2$ with $\frac{\partial^2 F_{\alpha}}{\partial \gamma^2}(1/2)<0$. Since $\E Z_{\textsc{nae}} = \exp\big(nF_{\alpha}(1/2)\big)$, it follows from \eqref{eq:bound:firstmo} that
\begin{equation}\label{eq:bound:with:poly}
\E Z_{\textsc{col}} \leq n^{O(1)}\exp\big(nF_{\alpha}(1/2)\big)=n^{O(1)}\cdot\E Z_{\textsc{nae}}\,.
\end{equation}
We now show that the polynomial factor $n^{O(1)}$ can actually be removed with a matching lower bound.

First, by \eqref{eq:local:clt} and the fact that $\gamma\to F_{\alpha}(\gamma)$ is uniquely maximized at $\gamma=1/2$ with strictly negative second derivative, the contribution to $\E Z_{\textsc{col}}$ from $\gamma$ such that $|\gamma-1/2|\geq n^{-1/3}$ is negligible:
\begin{equation}\label{eq:gamma:negligible}
\sum_{|\gamma-1/2|\geq n^{-1/3}} \binom{n}{n\gamma}\bp_{\gamma}\lesssim_{k,d} \exp\big(-\Omega_{k,d}\big(n^{1/3}\big)\big)\cdot \E Z_{\textsc{nae}}\,.
\end{equation}
Thus, we focus on the regime $|\gamma-1/2|\leq n^{-1/3}$.
Note that we can calculate $\bp_{\gamma}$ by summing over the empirical distribution $\nu$ of $(X_i)_{i\leq m}$. Consider $\nu \in \mathscr{P}(\{1,\ldots,k-1\})$ and let $p_{\gamma}(j):=\binom{k}{j}\gamma^j (1-\gamma)^{k-j}$. Then, 
\begin{equation*}
\begin{split}
\bp_{\gamma}
=\frac{\sum_{\nu} \one\Big(\sum_j j\nu_j=km\gamma\Big)\binom{m}{m\nu}\prod_j p_{\gamma}(j)^{m\nu_j}}{\P_{\gamma}\big(\sum_{i=1}^{m} X_i= km\gamma\big)}=\frac{\sum_{\nu} \one\Big(\sum_j j\nu_j=km\gamma\Big)e^{-km\gamma\lambda} \binom{m}{m\nu}\prod_j (p_{\gamma}(j)e^{\lambda j})^{m\nu_j}}{\P_{\gamma}\big(\sum_{i=1}^{m} X_i= km\gamma\big)}\,,
\end{split}
\end{equation*}
where $\binom{m}{m\nu}\equiv \frac{m!}{\prod_j (m\nu_j)!}$ and we introduced a lagrange parameter $\lambda\in \R$ in the last equality. Let
\[
\nu_{\gamma,\lambda}(x):=\frac{p_{\gamma}(x)e^{\lambda x}}{\sum_{j=1}^{k-1}p_{\gamma}(j)e^{\lambda j}}~~\textnormal{for}~~1\leq x\leq k-1\,,
\]
and denote $\P_{\gamma,\lambda}$ by the probability with respect to $\wt{X}_1,\ldots, \wt{X}_m\stackrel{i.i.d.}{\sim}\nu_{\gamma,\lambda}$. Then, it follows that
\begin{equation}\label{eq:express:bp}
\bp_{\gamma}= \frac{\P_{\gamma,\lambda}\big(\sum_{i=1}^{m}\wt{X}_i=km\gamma \big)}{\P_{\gamma}\big(\sum_{i=1}^{m} X_i= km\gamma\big)}\exp\big(-m\cdot\Xi(\gamma,\lambda)\big)\,,~~\textnormal{where}~~\Xi(\gamma,\lambda):=k\gamma\lambda-\log\bigg(\sum_{j=1}^{k-1}p_{\gamma}(j)e^{\lambda j}\bigg)\,.
\end{equation}
In order to use the local central limit theorem, we take $\lambda=\lambda(\gamma)$ such that $\E_{\gamma,\lambda}\wt{X}=k\gamma$, where $\wt{X}\sim \nu_{\gamma,\lambda}$. The existence of such $\lambda(\gamma)$ is guaranteed by the lemma below.
\begin{lemma}\label{lem:lagrange}
For large enough $n$ and all $\gamma$ such that $|\gamma-1/2|\leq n^{-1/3}$, there exists a unique $\lambda=\lambda(\gamma)$ such that $\E_{\gamma,\lambda}\wt{X}=k\gamma$ holds. Furthermore, we have $\lambda(1/2)=0$ and $\big|\lambda(\gamma)\big|\lesssim_{k}n^{-1/3}$ holds uniformly over $|\gamma-1/2|\leq n^{-1/3}$.
\end{lemma}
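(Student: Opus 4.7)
The plan is to combine standard exponential-tilting calculus with the implicit function theorem. First I would set $g_{\gamma}(\lambda):=\E_{\gamma,\lambda}\wt{X}$ and note that
\[
g_{\gamma}^\prime(\lambda)=\Var_{\gamma,\lambda}(\wt{X})\,,
\]
which is strictly positive whenever the tilted measure $\nu_{\gamma,\lambda}$ is supported on at least two points of $\{1,\ldots,k-1\}$. For $k\geq 3$ and $\gamma$ in a neighborhood of $1/2$, all of $p_{\gamma}(1),\ldots,p_{\gamma}(k-1)$ are bounded away from zero, so $g_{\gamma}$ is strictly increasing in $\lambda$. Taking $\lambda\to-\infty$ (resp. $\lambda\to+\infty$) forces $\nu_{\gamma,\lambda}$ to concentrate on $x=1$ (resp. $x=k-1$), so the image of $g_\gamma$ is the open interval $(1,k-1)$. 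Since $k\gamma\in(1,k-1)$ for $\gamma$ close to $1/2$ (using $k\geq 3$), the intermediate value theorem together with strict monotonicity produces the unique $\lambda(\gamma)$ satisfying $g_\gamma(\lambda(\gamma))=k\gamma$.

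Next I would verify $\lambda(1/2)=0$ by a short symmetry calculation: at $\gamma=1/2$ and $\lambda=0$ the tilted weights are proportional to $\binom{k}{j}$ on $1\leq j\leq k-1$, and pairing $j\leftrightarrow k-j$ gives
\[
g_{1/2}(0)=\frac{\sum_{j=1}^{k-1}j\binom{k}{j}}{\sum_{j=1}^{k-1}\binom{k}{j}}=\frac{k}{2}\,,
\]
so $\lambda(1/2)$ must equal $0$ by the uniqueness just established.

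Finally I would deduce the quantitative bound from the implicit function theorem applied to $F(\gamma,\lambda):=g_\gamma(\lambda)-k\gamma$ at the point $(\gamma,\lambda)=(1/2,0)$. The map $F$ is jointly real-analytic in a neighborhood of $(1/2,0)$, and $\partial_\lambda F(1/2,0)=\Var_{1/2,0}(\wt{X})$ is a strictly positive constant depending only on $k$. Therefore the implicit function theorem yields an open neighborhood $U_k$ of $1/2$, depending only on $k$, on which $\gamma\mapsto\lambda(\gamma)$ is $C^1$ with derivative bounded by some $C_k<\infty$. A mean value estimate then gives $|\lambda(\gamma)|\leq C_k|\gamma-1/2|\leq C_k n^{-1/3}$ uniformly over $|\gamma-1/2|\leq n^{-1/3}$, once $n$ is large enough that this interval lies inside $U_k$. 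The only real thing to watch is keeping the size of $U_k$ independent of $n$, but this is automatic because the entire construction depends on $k$ alone; no genuine obstacle arises.
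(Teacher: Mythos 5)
Your proposal is correct and follows essentially the same route as the paper: the key nondegeneracy is the strict positivity of $\Var_{\gamma,\lambda}(\wt{X})$ (equivalently the strong convexity of $\lambda \mapsto \log\sum_{j}p_{\gamma}(j)e^{\lambda j}$), the identity $\lambda(1/2)=0$ comes from the symmetry $\E_{1/2}[X \mid 1\le X\le k-1]=k/2$, and the uniform bound $|\lambda(\gamma)|\lesssim_k n^{-1/3}$ follows from the implicit function theorem at $(1/2,0)$ plus a mean value estimate on a $k$-dependent (hence $n$-independent) neighborhood. Your additional global existence-and-uniqueness step via monotonicity of $g_\gamma$ and the range $(1,k-1)$ is a harmless strengthening of what the paper gets locally from the implicit function theorem.
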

\begin{proof}
Note that we have $\frac{\partial \Xi}{\partial \lambda}(\gamma,\lambda)=k\gamma -\E_{\gamma,\lambda}\wt{X}$ by definition of $\nu_{\gamma,\lambda}$ and $\Xi(\gamma,\lambda)$. Further, we have that 
\[
\frac{\partial \Xi}{\partial \lambda}\Big(\frac{1}{2}\,,\,0\Big)=\frac{k}{2}-\E_{\frac{1}{2},0}\wt{X}=\frac{k}{2}-\E_{\frac{1}{2}}\big[X \,\big|\,X\notin \{0,k\}\big]=0\,,
\]
where $\E_{\frac{1}{2}}$ is with respect to $X\sim \textnormal{Binom}(1/2)$. Since $\lambda \to \log\big(\sum_{j=1}^{k-1}p_{\gamma}(j)e^{\lambda j}\big)$ is strongly convex, we have $\frac{\partial^2\Xi}{\partial \lambda^2}\big(\frac{1}{2},0\big)<0$. Thus, implicit function theorem shows that for $\gamma \in (1/2-\eps,1/2+\eps)$, where $\eps=\eps(k)>0$ depends only on $k$, there exists $\lambda=\lambda(\gamma)$ such that $\frac{\partial \Xi}{\partial \lambda}\big(\gamma,\lambda(\gamma)\big)=0$ holds, and that $\gamma \to \lambda(\gamma)$ is continuously differentiable. Therefore, for large enough $n$ and $\gamma\in (1/2-n^{-1/3},1/2+n^{1/3})$, there exists a unique $\lambda=\lambda(\gamma)$ such that $\E_{\gamma,\lambda(\gamma)}\wt{X}=k\gamma$, and $|\lambda(\gamma)|\lesssim_k n^{-1/3}$ holds uniformly over $\gamma\in (1/2-n^{-1/3},1/2+n^{1/3})$.
\end{proof}
Having Lemma \ref{lem:lagrange} in hand, we prove Lemma \ref{lem:firstmo:coloring} by appealing to the local central limit theorem.
\begin{proof}[Proof of Lemma \ref{lem:firstmo:coloring}]
The contribution to $\E Z_{\textsc{col}}$ from $\gamma$ such that $|\gamma-1/2|\geq n^{-1/3}$ is negligible by \eqref{eq:gamma:negligible}, thus we consider $\gamma$ such that $|\gamma-1/2|\leq n^{-1/3}$ holds. To this end, we take $\lambda=\lambda(\gamma)$ from Lemma \ref{lem:lagrange} in equation \eqref{eq:express:bp}. Then, by the local central limit theorem \cite{Borokov17},
\begin{equation}\label{eq:bp:estimate}
\bp_{\gamma}\asymp \bigg(\frac{\Var_{\gamma}\big(X\big)}{\Var_{\gamma,\lambda(\gamma)}\big(\wt{X}\big)}\bigg)^{1/2}\cdot\exp\Big(-m\cdot\Xi\big(\gamma,\lambda(\gamma)\big)\Big)\,,
\end{equation}
where $X\sim\textnormal{Binom}(k,\gamma)$ and $\wt{X}\sim \nu_{\gamma,\lambda(\gamma)}$. Lemma \ref{lem:lagrange} further shows that $\big|\lambda(\gamma)\big|\lesssim_k n^{-1/3}$, thus we have
\begin{equation}\label{eq:var:estimate}
\Var_{\gamma,\lambda(\gamma)}\big(\wt{X}\big)\asymp_k \Var_{\gamma}\big(X\,\big|\,1\leq X\leq k-1\big)\asymp_k \Var_{\gamma}(X)\,,
\end{equation}
where the final estimate holds because $|\gamma-1/2|\leq n^{-1/3}$. Combining with \eqref{eq:gamma:negligible}, it follows that
\begin{equation}\label{eq:firstmo:estimate}
\E Z_{\textsc{col}}=\big(1+o_n(1)\big)\sum_{|\gamma-1/2|\leq n^{-1/3}}\binom{n}{n\gamma}\bp_{\gamma}\asymp_{k,d} n^{-1/2}\sum_{|\gamma-1/2|\leq n^{-1/3}}\exp\big(nG_{\alpha}(\gamma)\big)\,,
\end{equation}
where
\[
G_{\alpha}(\gamma):=H(\gamma)-\alpha \cdot \Xi\big(\gamma,\lambda(\gamma)\big)\,.
\]
Note that by comparing \eqref{eq:bp:estimate} and \eqref{eq:var:estimate} with \eqref{eq:local:clt}, we have $G_{\alpha}(\gamma)\leq F_{\alpha}(\gamma)$ for $|\gamma-1/2|\leq n^{-1/3}$. Also, note that for $\gamma=1/2$, $G_{\alpha}(1/2)=F_{\alpha}(1/2)$ holds since
\[
G_{\alpha}(1/2)=H(1/2)-\alpha\cdot \Xi(1/2,0)=H(1/2)+\alpha\log\big(1-\gamma^k-(1-\gamma)^k\big)\,,
\]
where we used $\lambda(1/2)=0$ by Lemma \ref{lem:lagrange}. Recalling that $\gamma\to F_{\alpha}(\gamma)$ is uniquely maximized at $\gamma=1/2$ with strictly negative second derivative at the maximizer, it follows that the same holds for $\gamma\to G_{\alpha}(\gamma)$. Therefore, combining with \eqref{eq:firstmo:estimate}, we have
\[
\E Z_{\textsc{col}}\asymp_{k,d} \exp\big(nG_{\alpha}(1/2)\big)=\E Z_{\textsc{nae}}\,,
\]
which concludes the proof.
\end{proof}
\section{Proof of Proposition \ref{prop:bp} for $k \ge 4$}
\label{sec:3}
In this section, we prove Proposition \ref{prop:bp} for $k\geq 4$, which can be split into the following two lemmas. In Section \ref{sec:3.1}, we prove Lemma \ref{lem:partone}  which guarantees the existence and the uniqueness of the \textsc{bp} fixed point for $k \ge 4$.

\begin{lemma}  \label{lem:partone}
     For $k \ge 4$ and $d\in [d_{\textnormal{lbd}}(k), d_{\textnormal{ubd}}(k)]$, there exists a unique solution to $\Psi_d(x)=x$ in the range $x\in [\frac{1}{2} - \frac{1}{2^k}, \frac{1}{2}]$.
\end{lemma}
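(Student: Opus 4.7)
The plan is to treat the fixed-point problem via the intermediate value theorem combined with a contraction estimate on the derivative. Setting $h_d(x) := \Psi_d(x) - x$, I will show that $h_d$ is continuous on $I_k := [\tfrac{1}{2} - \tfrac{1}{2^k}, \tfrac{1}{2}]$, changes sign at the endpoints, and has derivative $\Psi_d'(x) - 1 < 0$ throughout. Strict monotonicity of $h_d$ then yields uniqueness, while the sign change at the endpoints gives existence via the intermediate value theorem.

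Monotonicity of $\Psi_d$ follows from direct differentiation:
\[
\hat{\Psi}_k'(x) = -\frac{(k-1)x^{k-2}}{(1-x^{k-1})^2}, \qquad \dot{\Psi}_d'(y) = -\frac{(d-1)y^{d-2}}{(2-y^{d-1})^2},
\]
so as a composition of two strictly decreasing functions on $(0,1)$, $\Psi_d$ is strictly increasing. For the upper boundary, the inequality $\dot{\Psi}_d(z) < \tfrac{1}{2}$ is equivalent to $z^{d-1} > 0$, which holds for every $z \in (0,1]$; since $\hat{\Psi}_k(1/2) = (1 - 2^{-(k-2)})/(1 - 2^{-(k-1)}) \in (0,1)$, we get $\Psi_d(1/2) < 1/2$ uniformly in $d$. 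For the lower boundary, I need $\Psi_d(1/2 - 1/2^k) \geq 1/2 - 1/2^k$ whenever $d \geq d_{\textnormal{lbd}}(k)$. For $k \geq 5$ with the explicit value $d_{\textnormal{lbd}}(k) = (2^{k-1}-2)k \log 2$, this reduces to an elementary estimate using $\log \hat{\Psi}_k(x) = \log(1-2x^{k-1}) - \log(1-x^{k-1})$ to control $\hat{\Psi}_k(1/2-1/2^k)^{d-1}$, and then showing $\dot{\Psi}_d$ at this value exceeds $1/2 - 1/2^k$. For $k = 4$, $d_{\textnormal{lbd}}(4) = 16.7$ is an adjusted numerical constant and one verifies the estimate by a short direct computation.

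The main obstacle is the contraction bound
\[
|\Psi_d'(x)| = \frac{(d-1)\hat{\Psi}_k(x)^{d-2}}{\bigl(2-\hat{\Psi}_k(x)^{d-1}\bigr)^2} \cdot \frac{(k-1)x^{k-2}}{(1-x^{k-1})^2} < 1
\]
for $x \in I_k$ and $d \in [d_{\textnormal{lbd}}(k), d_{\textnormal{ubd}}(k)]$. On $I_k$, the inequality $x^{k-1} \leq 2^{-(k-1)}$ bounds the second factor by a quantity of order $k/2^k$. For the first factor, the key estimate is
\[
\log \hat{\Psi}_k(x) = \log(1-2x^{k-1}) - \log(1-x^{k-1}) \leq -\frac{x^{k-1}}{1-x^{k-1}},
\]
which yields $\hat{\Psi}_k(x)^{d-1} \leq \exp\!\bigl(-(d-1)x^{k-1}/(1-x^{k-1})\bigr)$. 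With $d \leq d_{\textnormal{ubd}}(k) \asymp 2^{k-1} k \log 2$ and $x^{k-1}$ of order $2^{-(k-1)}$, this forces $\hat{\Psi}_k(x)^{d-1}$ to be of order $2^{-k}$, so that $(d-1)\hat{\Psi}_k(x)^{d-2}$ is of order $k\log 2$ and the denominator is bounded below by a quantity close to $4$. The overall bound is then of order $k^2/2^k$, comfortably less than $1$ for $k \geq 5$. For $k = 4$ the estimate is close to $1$, and a more delicate analysis that keeps track of the exact constants across the explicit ranges $x^3 \in [(7/16)^3, 1/8]$ and $d \in [16.7,\, 2^3 \cdot 4\log 2]$ is required to establish the strict inequality; this is where I expect the proof to be most technically involved. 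Once $|\Psi_d'(x)| < 1$ is established, $h_d$ is strictly decreasing, and the unique fixed point in $I_k$ follows.
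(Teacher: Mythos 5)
Your proposal follows the same architecture as the paper's proof: establish that $\Psi_d$ is a contraction on $I_k = [\tfrac12 - 2^{-k}, \tfrac12]$ via a derivative bound, show the endpoint inequalities $\Psi_d(1/2) < 1/2$ and $\Psi_d(1/2-2^{-k}) > 1/2-2^{-k}$, and conclude existence by IVT and uniqueness by strict monotonicity of $\Psi_d(x) - x$. The decomposition of $|\Psi_d'|$ into the clause and variable factors, the use of $\log \hat{\Psi}_k(x) \le -x^{k-1}/(1-x^{k-1})$ to control $\hat{\Psi}_k(x)^{d-1}$, and the recognition that $k=4$ requires a direct treatment of $\Psi_d'$ rather than separate factor bounds, all match the paper.

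One quantitative point deserves a flag. You claim the bound of order $k^2/2^k$ is ``comfortably less than $1$ for $k \ge 5$,'' but your order-of-magnitude estimate drops a multiplicative correction $e^{\eps_k}$ (coming from the fact that $x_0^{k-1} = 2^{-(k-1)}(1-2^{-(k-1)})^{k-1}$ differs from $2^{-(k-1)}$, and $d_{\textnormal{lbd}}(k) = (2^{k-1}-2)k\log 2$ differs from $2^{k-1}k\log 2$). For $k=5$ this correction is roughly $e^{1.3} \approx 3.6$, and the true bound on $|\Psi_d'|$ works out to about $0.99$, not $0.2$. So the $k=5$ case already requires tracking these sub-leading terms and a numerical check, much like $k=4$; only from $k \ge 6$ onward does the margin become genuinely comfortable. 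This does not invalidate your argument — the structure is sound — but it means the ``delicate analysis keeping track of exact constants'' you reserve for $k=4$ is in fact also needed at $k=5$.
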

By Lemma \ref{lem:partone}, the function $d\to \boldsymbol{\prescript{\star}{}{\Phi}}(d)$ is well-defined. In Section \ref{sec:3.2}, we prove Lemma \ref{lem:parttwo} which guarantees that $d_\star(k)$ is well-defined for $k \ge 4$. 
\begin{lemma}\label{lem:parttwo}
For $k\geq 4$, the function $d\to \boldsymbol{\prescript{\star}{}{\Phi}}(d)$ is continuous for $d\in [d_{\textnormal{lbd}}(k), d_{\textnormal{ubd}}(k)]$. Further, $\boldsymbol{\prescript{\star}{}\Phi}(d_{\textnormal{lbd}}(k))>0$ and $\boldsymbol{\prescript{\star}{}\Phi}(d_{\textnormal{ubd}}(k))<0$ hold.
\end{lemma}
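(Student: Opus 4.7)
The plan is to reduce the lemma to (i) continuity of the map $d\mapsto x(k,d)$ for the \textsc{bp} fixed point, and (ii) direct evaluation of $\boldsymbol{\prescript{\star}{}{\Phi}}$ at the two endpoints via an asymptotic expansion of $x(k,d)$ around $1/2$. Since $\boldsymbol{\prescript{\star}{}{\Phi}}$ is a smooth composition of logarithms and polynomials in $(x,d)$, continuity of $d\mapsto\boldsymbol{\prescript{\star}{}{\Phi}}(d)$ reduces immediately to continuity of $d\mapsto x(k,d)$.

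For (i), the proof of Lemma~\ref{lem:partone} shows that $\Psi_d$ is a strict contraction on $[1/2-1/2^k,1/2]$ with contraction constant bounded uniformly in $d\in[d_{\textnormal{lbd}}(k),d_{\textnormal{ubd}}(k)]$. Combined with joint smoothness of $(x,d)\mapsto\Psi_d(x)$, the implicit function theorem applied to $\Psi_d(x)-x=0$ gives continuous (indeed $C^1$) dependence of $x(k,d)$ on $d$: the derivative $\partial_x(\Psi_d(x)-x)=\Psi_d'(x)-1$ is uniformly bounded away from zero by the contraction estimate.

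For (ii), the key algebraic observation is that $\boldsymbol{\prescript{\star}{}{\Phi}}(d)$ evaluated at the substitution $x=1/2$ collapses to the first-moment exponent $\Phi_k(\alpha)=\log 2+\alpha\log(1-2^{-k+1})$, as one verifies by combining the three terms in \eqref{eq:def:Phi}. Taylor expanding $\log(1-2^{-k+1})=-2^{-k+1}-\tfrac{1}{2}2^{-2k+2}-\cdots$ at $\alpha=\alpha_{\textnormal{ubd}}=2^{k-1}\log 2$ gives $\Phi_k(\alpha_{\textnormal{ubd}})=-\Theta(2^{-k}\log 2)<0$, while at $\alpha=\alpha_{\textnormal{lbd}}=(2^{k-1}-2)\log 2$ it gives $\Phi_k(\alpha_{\textnormal{lbd}})=+\Theta(2^{-k}\log 2)>0$. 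I would then locate the true fixed point via one iteration of \textsc{bp}: since $d\cdot 2^{-k+1}=\Theta(k)$ at both endpoints, one finds $\hat\Psi(1/2)\approx 1-2^{-k+1}$ and then applying $\dot\Psi$ yields $x(k,d)=1/2-\delta$ with $\delta=O(2^{-k})$. Inserting $x=1/2-\delta$ into $\boldsymbol{\prescript{\star}{}{\Phi}}$ and Taylor expanding, the linear-in-$\delta$ contributions from the $(d-1)\log(1-x^{k-1})$ and $-d(1-k^{-1}-d^{-1})\log(1-2x^k)$ terms cancel at leading order (a manifestation of the \textsc{bp} equation being the stationarity condition for $\boldsymbol{\prescript{\star}{}{\Phi}}$ in $x$), leaving a residual of size $O(k^2\delta^2)=O(k^2/4^k)$, which is negligible compared to $\Theta(2^{-k}\log 2)$ for $k\ge 4$.

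The delicate step I expect to dominate the technical work is this residual estimate: the endpoint values of $\Phi_k(\alpha)$ are themselves only of size $2^{-k}\log 2$, so one must track the Taylor expansion around $x=1/2$ carefully and, in particular, verify the first-order cancellation rigorously, to ensure that the perturbation introduced by replacing $x=1/2$ by the true \textsc{bp} fixed point $x(k,d)$ does not swamp the main sign-giving term. I would expect the analysis to close with a uniform constant for $k\ge 5$, while the boundary case $k=4$ presumably requires a slightly sharper numerical input—consistent with $d_{\textnormal{lbd}}(4)$ being set explicitly to $16.7$ rather than by the asymptotic formula $(2^{k-1}-2)k\log 2$.
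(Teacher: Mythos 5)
Your reduction of continuity to the implicit function theorem via the uniform contraction bound is exactly the paper's argument and is fine. The endpoint sign argument, however, has a genuine gap: the claimed first-order cancellation is false. The \textsc{bp} equation \eqref{eq:BP} is \emph{not} the stationarity condition in $x$ for the function $\Phi(d,x)$ of \eqref{eq:def:Phi:d:x}. At the fixed point the derivative $\frac{\partial \Phi}{\partial x}\big(d,x(k,d)\big)$ is of order one: for instance at $k=4$, $d=d_{\ubd}(4)=32\log 2$ the fixed point is $x\approx 0.487$ and the formula \eqref{eq:Phi:derivative:x} gives $\partial_x\Phi\approx 1.2$; the paper's own $k=3$ analysis (Lemma \ref{lem:threephipositive}) likewise proves $\partial_x\Phi>0.1$ in a neighborhood of the fixed point, which would be impossible if the fixed point were a critical point. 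Consequently, replacing $x=1/2$ by the true fixed point $x=1/2-\delta$ shifts $\Phi$ by $\Theta(\delta)$, not by $O(k^2\delta^2)$, and your error budget collapses.

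This matters because the margins are razor-thin at the lower endpoint. A priori you only know $\delta\le 2^{-k}$ (the fixed point lies in $[1/2-2^{-k},1/2]$), while $\partial_x\Phi$ is close to $2$ near $x=1/2$; so the first-order correction can be as large as roughly $2\cdot 2^{-k}$, whereas your main term at $d_{\lbd}(k)$ is $\Phi\big(d_{\lbd}(k),1/2\big)\approx 3\log 2\cdot 2^{-k}$. Since $3\log 2-2\approx 0.08$, the sign at $d_{\lbd}(k)$ is decided exactly by this competition, which your expansion (having discarded the first-order term) cannot resolve. The paper sidesteps locating the fixed point altogether by proving the uniform statements $\Phi(d_{\lbd}(k),x)>0$ and $\Phi(d_{\ubd}(k),x)<0$ for all $x\in[1/2-2^{-k},1/2]$ (Lemmas \ref{lem:leftendpoint:Phi} and \ref{lem:rightendpoint:Phi}): at $d_{\lbd}$ by linearizing $-\log(1-x)$ at $x=1/2$ and using monotonicity of the remaining term, which accounts for the worst-case first-order shift $-2\cdot 2^{-k}$ exactly and leaves the margin $(3\log 2-2)2^{-k}$; at $d_{\ubd}$ by showing $x\mapsto\Phi(d_{\ubd}(k),x)$ is increasing for $k\ge5$ (convexity plus endpoint checks for $k=4$), so that the value at $x=1/2$, namely $\log 2+2^{k-1}\log2\cdot\log(1-2^{-k+1})<0$, is the worst case. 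Note that at $d_{\ubd}$ the nonvanishing first-order term actually helps you (the derivative is positive and $x<1/2$), but your argument does not exploit this; and both $k=4$ endpoints require the separate treatment (the adjusted $d_{\lbd}(4)=16.7$ and a convexity/numerical check) that your sketch leaves open.
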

\begin{proof}[Proof of Proposition \ref{prop:bp} for $k\ge 4$]
This is immediate from Lemma \ref{lem:partone} and Lemma \ref{lem:parttwo}.
\end{proof}

\subsection{Proof of Lemma \ref{lem:partone}}
\label{sec:3.1}
Recall the variable \textsc{bp} recursion $\dot{\Psi}$ and the clause \textsc{bp} recursion $\hat{\Psi}$ defined in \eqref{eq:def:var:clause:BP}. To prove the uniqueness of the \textsc{bp} fixed point, we show that the \textsc{bp} recursion $\Psi_d\equiv \dot{\Psi}\circ \hat{\Psi}$ is a contraction for $k\geq 4$.

\begin{lemma}
\label{lem:derivativebound}
For $k\ge 4$ and $d\in [d_{\textnormal{lbd}}(k), d_{\textnormal{ubd}}(k)]$, $\big|(\Psi_d)^\prime (x)\big|<1$ holds uniformly over $x\in [\frac{1}{2}-\frac{1}{2^k},\frac{1}{2}]$.
\end{lemma}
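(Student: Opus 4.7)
The approach is the chain rule applied to $\Psi_d=\dot\Psi_d\circ\hat\Psi_k$. Direct computation gives
\[
\hat\Psi_k'(x) \;=\; -\frac{(k-1)x^{k-2}}{(1-x^{k-1})^2}, \qquad \dot\Psi_d'(y) \;=\; -\frac{(d-1)y^{d-2}}{(2-y^{d-1})^2}.
\]
Both are negative, so $(\Psi_d)'(x)>0$. Setting $u=x^{k-1}$ and $v=\hat\Psi_k(x)^{d-1}$ and simplifying using $\hat\Psi_k(x)^{d-2}=v(1-u)/(1-2u)$ yields the compact form
\[
|(\Psi_d)'(x)| \;=\; \frac{(d-1)(k-1)\,x^{k-2}\,v}{(1-u)(1-2u)\,(2-v)^2}.
\]

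The next step is to bound this expression on the rectangle $x\in[\tfrac12-\tfrac1{2^k},\tfrac12]$, $d\in[d_{\textnormal{lbd}}(k),d_{\textnormal{ubd}}(k)]$, by first reducing to the corner $(x,d)=(\tfrac12-\tfrac1{2^k},d_{\textnormal{lbd}}(k))$. For monotonicity in $d$, the logarithmic derivative
\[
\partial_d \log|(\Psi_d)'(x)| \;=\; \frac{1}{d-1}+\log\hat\Psi_k(x)\cdot\frac{2+v}{2-v}
\]
is negative whenever $(d-1)\,|\log\hat\Psi_k(x)|\,\tfrac{2+v}{2-v}>1$. This holds throughout our range because $\log\hat\Psi_k(x)\asymp -u$ and $(d_{\textnormal{lbd}}(k)-1)\,u_{\textnormal{min}}\gtrsim k\log 2\gg 1$, where $u_{\textnormal{min}}=(\tfrac12-\tfrac1{2^k})^{k-1}$. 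An analogous analysis of $\partial_x\log|(\Psi_d)'(x)|$ yields monotonicity in $x$, so the maximum over the rectangle is attained at the corner.

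At the corner, each of the four factors can be evaluated or bounded in closed form. For $k\ge 5$ this is straightforward: estimates $(d-1)(k-1)x^{k-2}\le 2k^2\log 2$, $v\le \exp(-(d-1)u/(1-u))\le 2^{-\Omega(k)}$, $(1-u)(1-2u)\ge 21/32$, and $(2-v)^2\ge 3$ (using the smallness of $v$ at the corner) combine to give a bound decaying geometrically in $k$, comfortably below $1$. The main obstacle is the case $k=4$, where the corner value is approximately $0.83$ and all four loose bounds cumulatively fail. There the argument reduces to a direct verification of the closed-form inequality $|(\Psi_d)'(7/16)|<1$ for $d\in[16.7,\,32\log 2]$. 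Since by the monotonicity step this quantity is decreasing in $d$, one only needs to evaluate the explicit expression above at $d=16.7$ and confirm the resulting numerical bound is strictly below $1$.
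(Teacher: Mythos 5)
Your compact formula for $|(\Psi_d)'(x)|$ and your criterion for monotonicity in $d$ are both correct (I checked that $\partial_d \log|(\Psi_d)'(x)| = \tfrac{1}{d-1} + \log\hat\Psi_k(x)\cdot\tfrac{2+v}{2-v}$, and that $(d-1)|\log\hat\Psi_k(x)| > 1$ holds throughout the rectangle, e.g.\ $\approx 1.3$ at the worst point for $k=4$ — so ``$\gg 1$'' should really be the checked inequality ``$>1$''). The overall plan — reduce to the corner and evaluate — is close in spirit to the paper's treatment of $k=4$, applied uniformly in $k$; the paper instead bounds $|\hat\Psi'|$ and $|\dot\Psi'|$ separately for $k\ge 5$ and only uses the product form for $k=4$. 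However, there are two genuine gaps as written. First, your corner estimate for $k\ge 5$ fails at $k=5$: there $d_{\textnormal{lbd}}(5)-1 = 70\log 2 -1\approx 47.5$, $u = (15/32)^4 \approx 0.048$, so your bound $v\le \exp(-(d-1)u/(1-u)) \approx e^{-2.41}\approx 0.090$, and the chain gives $2k^2\log 2 \cdot v \,/\,\big((21/32)\cdot 3\big) \approx 34.7\cdot 0.090/1.97 \approx 1.6 > 1$ (even the sharper prefactor $2k(k-1)\log 2$ gives $\approx 1.26$). The true corner value at $k=5$ is about $0.52$, so the statement is fine, but your stated constants only close the argument for $k\ge 6$; $k=5$ needs either exact arithmetic at the corner or a separate numerical verification, exactly as the paper's own argument bottoms out in a numerical check of its $k=5$ bound ($\alpha_5<0.99$).

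Second, the monotonicity in $x$ is asserted (``an analogous analysis yields monotonicity in $x$'') but never carried out, and for $k=4$ this is precisely the delicate step: in the paper it amounts to showing that $v\mapsto v^{d-2}(2-v)(1-v)/(2-v^{d-1})^2$ is increasing, which requires $d>\tfrac{4-3v}{(2-v)(1-v)}$ and a numerical check of that threshold at the endpoint ($\approx 12.8 < 16.7$ for $k=4$). The claim is true — the logarithmic-derivative computation in $x$ gives negativity provided roughly $(d-1)(k-1)u/\big((1-u)(1-2u)\big)$ exceeds $(k-2)+\tfrac{(k-1)u}{1-u}+\tfrac{2(k-1)u}{1-2u}$, which holds on your rectangle (e.g.\ $5.2>3.5$ at the worst point for $k=4$) — but without this verification the reduction to the corner, and hence your $k=4$ case, is not yet a proof. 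With these two repairs (explicit monotonicity-in-$x$ computation, and exact or numerical corner evaluation at $k=5$ in addition to $k=4$), your route goes through and is essentially a uniform version of the paper's $k=4$ argument.
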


\begin{proof} 
Throughout, we let $x\in[1/2-1/2^k,1/2]$ and denote $v=\hat{\Psi}(x)$. We first consider $k\geq 5$. Observe that the derivative of the clause \textsc{bp} recursion can simply be bounded in absolute value by
     \begin{equation}\label{eq:bound:clause:bp}
     \big|(\hat{\Psi})^\prime(x)\big|=\frac{(k-1)x^{k-2}}{(1-x^{k-1})^2} \leq \frac{(k-1)\cdot 2^{-k+2}}{(1-2^{-k+1})^2}=\frac{4(k-1)}{2^k (1-2^{-k+1})^2}\,, 
     \end{equation}
     where the inequality holds since $x\to \frac{x^{k-2}}{(1-x^{k-1})^2}$ is increasing. Similarly, we bound the derivative of the variable \textsc{bp} recursion:
    \begin{equation}\label{eq:clause:BP:derivative:tech:1}
    \big|(\dot{\Psi})^\prime(v)\big|=\frac{(d-1)v^{d-2}}{(2-v^{d-1})^2} \leq \frac{(d-1)v_0^{d-2}}{(2-v_0^{d-1})^2} \leq \frac{(d-1)v_0^{d-2}}{(2-v_0^{d-2})^2}\,,
    \end{equation}
    where we denoted $v_0:=\hat{\Psi}(x_0)$ for $x_0=1/2-1/2^k$. The first inequality holds because $x \to \hat{\Psi}(\cdot)$ is decreasing on $[1/2-1/2^k,1/2]$, and the last inequality holds since $v_0<1$. To this end, we upper bound $v_0^{d-2}$ by 
    \begin{equation}\label{eq:clause:BP:derivative:tech:2}
     v_0^{d-2}=\left(1-\frac{x_0^{k-1}}{1-x_0^{k-1}}\right)^{d-2}\leq (1-x_0^{k-1})^{d-2}\leq e^{-(d-2)x_0^{k-1}}\,.
    \end{equation}
    Note that $x_0^{k-1}=\left(\frac{1}{2}\right)^{k-1} \left(1-\frac{2}{2^k}\right)^{k-1} \geq \left(\frac{1}{2}\right)^{k-1}\left(1-\frac{2(k-1)}{2^k}\right)$ and $d\geq (2^{k-1}-2)k \log 2$ hold, thus we can lower bound $(d-2)x_0^{k-1}$ by
    \[
    (d-2)x_0^{k-1}\geq \left(k\log 2 -\frac{4k\log 2+4}{2^k}\right)\cdot \left(1-\frac{2(k-1)}{2^k}\right)\,.
    \]
    Thus, combining with \eqref{eq:clause:BP:derivative:tech:2} shows that
    \begin{equation}\label{eq:bound:v0:power}
        v_0^{d-2}\leq 2^{-k}e^{\eps_k}\,,\quad\textnormal{where}\quad \eps_k:= \frac{2(k-1)k\log 2}{2^k}+\frac{4k\log2+4}{2^k}\left(1-\frac{2(k-1)}{2^k}\right)\,.
    \end{equation}
    Plugging this bound into \eqref{eq:clause:BP:derivative:tech:1}, we have

    \[
    |(\dot{\Psi})^\prime(v)|< (d-1) \frac{v_0^{d-2}}{(2-v_0^{d-2})^2}\leq (2^{k-1} k \log 2-1)\cdot \frac{2^{-k}\cdot e^{\eps_k}}{(2-2^{-k}e^{\eps_k})^2} \,.
    \]
   Combining with the contraction of clause \textsc{bp} recursion in \eqref{eq:bound:clause:bp}, we have
    \[|(\Psi_{d})^\prime(x)|\leq \alpha_k:= \frac{2k(k-1)\log 2}{2^k}\cdot\left(1-\frac{1}{2^{k-1} k \log 2}\right) \cdot\frac{e^{\eps_k}}{(1-2^{-k+1})^2(2-2^{-k}e^{\eps_k})^2}\,.\]
    By comparing $\eps_k$ and $\eps_{k+1}$ for $k\geq 5$, it can be easily checked that $k\to \eps_k$ is decreasing, and the same holds for $k\to \frac{2k(k-1)\log 2}{2^k}\cdot\left(1-\frac{1}{2^{k-1} k \log 2}\right)$. Thus, $k\to\alpha_k$ is decreasing for $k\geq 5$. Furthermore, $\alpha_5$ can be calculated up to arbitrary precision (e.g. by Mathematica), which satisfies $\alpha_5<0.99<1$. Consequently, $|(\Psi_{d})^\prime(x)|<1$ holds for $k\geq 5$.
    
    The case where $k=4$ is more delicate, and the previous strategy of bounding the derivative of clause and variable \textsc{bp} recursions separately no longer is successful. To this end, we bound $(\Psi_d)^\prime(x)$ directly. If we denote $v=\hat{\Psi}_k(x)$, then
   \begin{align*}
   \big|(\Psi_d)^\prime(x)\big|=\big|(\hat{\Psi})^\prime(x)|\cdot |(\dot{\Psi})^\prime(v)\big| &= \frac{(k-1)(d-1)v^{d-2}}{(2-v^{d-1})^2} \cdot \frac{x^{k-1}}{(1-x^{k-1})^2} \cdot \frac{1}{x}\,.
   \end{align*}
    Since $v\equiv \hat{\Psi}_k(x)\equiv \frac{1-2x^{k-1}}{1-x^{k-1}}$, rearranging gives $x^{k-1}=\frac{1-v}{2-v}$. Substituting this in for $x^{k-1}$, we have that
   \begin{equation}\label{eq:compute:k:4:derivative}
    \big|(\Psi_d)^\prime(x)\big|=(k-1)(d-1)\cdot\frac{v^{d-2}(2-v)(1-v)}{(2-v^{d-1})^2}\cdot \frac{1}{x}\,.
    \end{equation}
   We now claim that $v\to \frac{v^{d-2}(2-v)(1-v)}{(2-v^{d-1})^2}$ is increasing for $v\in [\hat{\Psi}_{4}(1/2),\hat{\Psi}_4(1/2-1/2^4)]$ and $d\in [24\log 2, 32\log 2]$ (recall that $24\log 2>16.7\equiv d_{\textnormal{lbd}}(4)$ holds). Since $v\to (2-v^{d-1})^2$ is decreasing, it suffices to show that $v\to v^{d-2}(2-v)(1-v)$ is increasing. Note that
   \[
   \frac{\de}{\de v}\Big(v^{d-2}(2-v)(1-v)\Big)=(dv^2-3(d-1)v+2(d-2))v^{d-3}>0\iff d>\frac{4-3v}{(2-v)(1-v)}\,.
   \]
   Note that $v\to \frac{4-3v}{(2-v)(1-v)}$ is increasing since its derivative is given by $\frac{3v^2-8v+6}{(2-v)^2(1-v)^2}>0$. Thus, to prove our claim, it suffices to check that for $d_0:=24\log 2$ and $v_0= \hat{\Psi}_4(1/2-1/2^4)$ that $d_0>\frac{4-3v_0}{(2-v_0)(1-v_0)}$ holds. By a direct calculation, $v_0=3410/3753<0.91$ and $24\log 2>16>\frac{4-3\cdot 0.91}{(2-0.91)(1-0.91)}$ holds, thus the claim that $v\to \frac{v^{d-2}(2-v)(1-v)}{(2-v^{d-1})^2}$ is increasing is proven for $d,v$ in the regime of interest.

   Note that $x \to v=\hat{\Psi}_4(x)$ is decreasing, thus \eqref{eq:compute:k:4:derivative} and our previous claim shows that for all $x_0\leq x\leq 1/2$, where $x_0=1/2-1/2^4$, we have
   \[
    \big|(\Psi_d)^\prime(x)\big|\leq (d-1)(k-1)\frac{v_0^{d-2}(2-v_0)(1-v_0)}{(2-v_0^{d-1})^2}\cdot \frac{1}{x_0}\,,
    \]
   where $v_0=\hat{\Psi}_4(x_0)=3410/3753$. We next show that the right hand side as a function of $d\in [24\log 2, 32\log 2]$ is decreasing: since $d\to (2-v_0^{d-1})^2$ is increasing, it suffices to show that $d\to (d-1)v_0^{d-2}$ is decreasing. Note that
    \[
   \frac{\de }{\de d}\Big((d-1)v_0^{d-2}\Big)=v_0^{d-2}\Big(1-(d-1)\log \big(1/v_0\big)\Big)<0 \iff d>\frac{1}{\log(1/v_0)}+1\,,
    \]
    and it can be verified that $24\log 2 >16>1/\log(3753/3410)+1$ holds. Therefore, for $k=4$, it follows that for $d_0=24\log 2$,
    \[
     \big|(\Psi_d)^\prime(x)\big|\leq 3(d_0 -1)\frac{v_0^{d_0-2}(2-v_0)(1-v_0)}{(2-v_0^{d_0-1})^2}\cdot \frac{1}{x_0}\,.
     \]
     The right hand side can be computed to arbitrary precision (e.g. by Mathematica), it can be verified that $3(d_0 -1)\frac{v_0^{d_0-2}(2-v_0)(1-v_0)}{(2-v_0^{d_0-1})^2}\cdot \frac{1}{x_0}<0.9<1$. This concludes the proof for the case $k=4$.
\end{proof}
In the proof of Lemma \ref{lem:derivativebound}, we did not use the adjustment for $d_{\textnormal{lbd}}(4)\equiv 16.7>24\log 2$. That is,  $\max_{\frac{1}{2}-\frac{1}{2^4}\leq x \leq \frac{1}{2}}\big|(\Psi_d)^\prime (x)\big|<1$ holds for $d\in [24\log 2, 32\log 2]$. The adjustment $d_{\textnormal{lbd}}(4)\equiv 16.7$ is needed for the following lemma, which guarantees the existence of the solution to $\Psi_d(x)=x$.
\begin{lemma} 
\label{lem:leftendpoint}

    $\Psi_{d}(\frac{1}{2}-\frac{1}{2^k}) >\frac{1}{2}-\frac{1}{2^k}$ holds for $k\geq 4$ for $d\in [d_{\textnormal{lbd}}(k),d_{\textnormal{ubd}}(k)]$.
\end{lemma}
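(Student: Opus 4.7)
The plan is to reduce the inequality $\Psi_d(x_0) > x_0$, where $x_0 := 1/2 - 1/2^k$, to a simple one-parameter condition on $v_0 := \hat{\Psi}(x_0)$ (which depends only on $k$), and then establish this condition analytically for $k \geq 5$ and by a direct numerical computation for $k = 4$.

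First, I would unfold $\dot{\Psi}(v_0) > x_0$ by clearing denominators; elementary algebra shows that this is equivalent to
\[
v_0^{d-1} \;<\; \frac{2}{2^{k-1}+1}.
\]
Since $v_0 \in (0,1)$ depends only on $k$, the left-hand side is strictly decreasing in $d$, so it suffices to verify the inequality at the endpoint $d = d_{\textnormal{lbd}}(k)$.

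For $k \geq 5$, the approach is to rewrite $v_0 = 1 - x_0^{k-1}/(1 - x_0^{k-1})$ and apply $\log(1-t) \leq -t$ to obtain $|\log v_0| \geq x_0^{k-1}/(1-x_0^{k-1}) \geq x_0^{k-1}$. A one-step Bernoulli bound $x_0^{k-1} = 2^{-(k-1)}\bigl(1 - 2^{-(k-1)}\bigr)^{k-1} \geq 2^{-(k-1)}\bigl(1 - (k-1)\cdot 2^{-(k-1)}\bigr)$, combined with $d \geq (2^{k-1}-2)k\log 2$, should give
\[
(d-1)|\log v_0| \;\geq\; k \log 2 \;-\; O\!\bigl(k^2 \cdot 2^{-k}\bigr).
\]
The target is $\log\bigl((2^{k-1}+1)/2\bigr) = (k-2)\log 2 + O(2^{-k})$, so there is a comfortable margin of approximately $2\log 2$ for every $k \geq 5$; it only remains to carry the explicit constants through to make this rigorous for all such $k$.

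The hard part will be $k = 4$, which is precisely the reason that $d_{\textnormal{lbd}}(4)$ is adjusted upward from $24\log 2 \approx 16.636$ to $16.7$. Here the analytic estimate above leaves essentially no margin, so I would instead compute $x_0^3 = 343/4096$ and $v_0 = 3410/3753$ exactly, and verify $v_0^{15.7} < 2/9$ by a direct high-precision evaluation (e.g.\ in Mathematica). The inequality does hold, but only by a margin on the order of $10^{-4}$, which is exactly why the endpoint cannot be taken to be the natural value $24 \log 2$ coming from the $k \geq 5$ formula.
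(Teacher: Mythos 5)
Your proposal is correct and follows essentially the same route as the paper: reduce $\Psi_d(x_0)>x_0$ (with $x_0=\frac12-\frac{1}{2^k}$) to $v_0^{d-1}<\frac{4}{2^k+2}=\frac{2}{2^{k-1}+1}$, note the worst case is $d=d_{\textnormal{lbd}}(k)$, bound $v_0\le 1-x_0^{k-1}\le e^{-x_0^{k-1}}$ together with a Bernoulli estimate on $x_0^{k-1}$ for $k\ge 5$, and verify $k=4$ numerically via $v_0=3410/3753$ and $v_0^{15.7}<2/9$. One caution: at $k=5$ the margin left by these crude bounds is only about $0.09$ in the log scale (not the asymptotic $2\log 2$ you cite), so the "carry the constants through" step genuinely requires a numerical check at $k=5$ plus monotonicity in $k$ for $k\ge 6$, which is exactly how the paper handles it (checking $e^{\beta_5}(1+2^{-4})<3.7<4$ and that $\beta_k$ is decreasing).
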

\begin{proof} 
    Let $v_0\equiv v_0(k)=\hat{\Psi}\left(\frac{1}{2}-\frac{1}{2^k}\right)$ as before. Then, from the definition of $\dot{\Psi}, \hat{\Psi}$ in \eqref{eq:def:var:clause:BP}, $\Psi_{d}(\frac{1}{2}-\frac{1}{2^k}) >\frac{1}{2}-\frac{1}{2^k}$ is equivalent to  $v_0^{d-1}<\frac{4}{2^k+2}$, which we aim to show for $k\geq 4$. We start with the case $k\geq 5$. We have shown in \eqref{eq:bound:v0:power} that $v_0^{d-2}\leq 2^{-k}e^{\eps_k}$, holds, and by an analogous proof, $v_0^{d-1} \leq 2^{-k}e^{\beta_k}$ holds, where $\beta_k\equiv \eps_k-\frac{1}{2^{k-1}}\big(1-\frac{2(k-1)}{2^k}\big)$. Thus, it suffices to show that
    \begin{equation*}
    e^{\beta_k}\left(1+\frac{1}{2^{k-1}}\right)<4\,,\quad\textnormal{where}\quad \beta_k\equiv\frac{2(k-1)k\log 2}{2^k}+\frac{4k\log2+2}{2^k}\left(1-\frac{2(k-1)}{2^k}\right)\,.
    \end{equation*}
    For $k=5$, $e^{\beta_5}(1+1/2^4)$ can be computed to arbitrary precision (e.g. by Mathematica), and it can be numerically verified that $e^{\beta_5}(1+1/2^4)<3.7$. Further, $k\to \beta_k$ is decreasing by comparing $\beta_k$ and $\beta_{k+1}$, thus this concludes the proof for $k\geq 5$.

    Next, we consider the case $k=4$. Since $d\to v_0^{d-1}$ is maximized at $d=d_{\textnormal{lbd}}(4) \equiv 16.7$, it suffices to show that $v_0^{15.7}\leq \frac{2}{9}$ holds, where $v_0\equiv \hat{\Psi}_4(1/2-1/2^4)=3410/3753$. Since $v_0^{15.7}=(3410/3753)^{15.7}$ can be computed to arbitrary precision (e.g. by Mathematica), it can be checked that $v_0^{15.7}=(3410/3753)^{15.7} <0.2221 < \frac{2}{9}$ holds, so this concludes the proof.
\end{proof}

\begin{proof}[Proof of Lemma \ref{lem:partone}]
By Lemma \ref{lem:leftendpoint}, $\Psi_{d}(\frac{1}{2}-\frac{1}{2^k}) >\frac{1}{2}-\frac{1}{2^k}$ holds for $k\geq 4$. Note that $\Psi_d(1/2)<1/2$ holds since $\dot{\Psi}(x)<1/2$ holds for any $x\geq 0$. Thus, since $x\to \Psi_d(x)$ is continuous and differentiable, intermediate value theorem guarantees the existence of the solution to $\Psi_d(x)=x$ for $x\in [\frac{1}{2}-\frac{1}{2^k},\frac{1}{2}]$. Moreover, $\big|(\Psi_d)^\prime(x)\big|<1$ holds uniformly over $x\in [\frac{1}{2}-\frac{1}{2^k},\frac{1}{2}]$ by Lemma \ref{lem:derivativebound}, thus mean value theorem guarantees the uniqueness of the solution to $\Psi_d(x)=x$ for $x\in [\frac{1}{2}-\frac{1}{2^k},\frac{1}{2}]$.
\end{proof}

\subsection{Proof of Lemma \ref{lem:parttwo}}
\label{sec:3.2}
Recall that $\bPhi(d)$ is defined in \eqref{eq:def:Phi} as $\bPhi(d)\equiv \Phi\big(d,x(k,d)\big)$, where $x(k,d)\in [\frac{1}{2}-\frac{1}{2^k},\frac{1}{2}]$ is the solution to $\Psi_d(x)=x$, and we defined the function $\Phi(d,x)$ by
 \begin{equation}\label{eq:def:Phi:d:x}
 \Phi(d,x)\equiv \Phi_k(d,x):=-\log(1-x)-d(1-k^{-1}-d^{-1})\log(1-2x^k)+(d-1)\log(1-x^{k-1})\,.
 \end{equation}
 To prove $\bPhi\big(d_{\textnormal{lbd}}(k)\big)>0$ and $\bPhi\big(d_{\textnormal{ubd}}(k)\big)<0$, we show respectively in Lemmas \ref{lem:leftendpoint:Phi} and \ref{lem:rightendpoint:Phi} that $\Phi\big(d_{\textnormal{lbd}}(k), x\big)>0$ and  $\Phi\big(d_{\textnormal{ubd}}(k), x\big)<0$ hold uniformly over $x\in [\frac{1}{2}-\frac{1}{2^k},\frac{1}{2}]$.
\begin{lemma} \label{lem:leftendpoint:Phi}
For $k\geq 4$, $\Phi(d_{\textnormal{lbd}}(k), x)>0$  holds uniformly over $x\in [\frac{1}{2}-\frac{1}{2^k},\frac{1}{2}]$.
\end{lemma}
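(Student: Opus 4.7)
The plan is to show $\Phi(d_{\textnormal{lbd}}(k), x) > 0$ uniformly on $[1/2 - 2^{-k}, 1/2]$ by evaluating $\Phi$ at the right endpoint and carefully controlling its decrease over the small interval. At $x = 1/2$, direct substitution yields the closed form $\Phi(d, 1/2) = \log 2 + (d/k)\log(1 - 2^{1-k})$, which is exactly the exponent of $\E Z_{\textsc{nae}}$ in \eqref{eq:first:mo} and is positive iff $d < d_1(k) = k \log 2 / (-\log(1-2^{1-k}))$. Expanding $\log(1-\eta)$ in powers of $\eta := 2^{1-k}$ and plugging in $d_{\textnormal{lbd}}(k) = (2^{k-1}-2)k\log 2$, I would derive the asymptotic
\[
\Phi(d_{\textnormal{lbd}}(k), 1/2) = 3 \log 2 \cdot 2^{-k} + \tfrac{8}{3}\log 2 \cdot 2^{-2k} + O(2^{-3k}),
\]
confirming positivity with an explicit margin of order $2^{-k}$.

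To bound the drop, writing $x_0 = (1-\eta)/2$ and $A := \eta(1-\eta)^{k-1}$, a direct algebraic manipulation (using $2x_0^k = \eta(1-\eta)^k$ and $x_0^{k-1} = A$) yields
\[
\Phi(d, 1/2) - \Phi(d, x_0) = \log(1+\eta) + (d/k)\log\frac{1-\eta}{1-A} - (d - d/k - 1)\log\Big(1 + \frac{A\eta}{1-A}\Big).
\]
The key feature is a cancellation between the second and third summands at order $2^{-k}$: to leading order in $\eta$, both $(d/k)(k-1)\eta^2$ and $(d - d/k - 1)\eta^2$ equal $2(k-1)\log 2 \cdot 2^{-k} + O(2^{-2k})$, so these $O(2^{-k})$ contributions cancel and the drop reduces to $\log(1+\eta) + O(k^2 \cdot 2^{-2k}) = 2^{1-k} + O(k^2 \cdot 2^{-2k})$. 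Subtracting from $\Phi(d, 1/2)$ gives
\[
\Phi(d_{\textnormal{lbd}}(k), x_0) = (3\log 2 - 2) \cdot 2^{-k} + O(k^2 \cdot 2^{-2k}),
\]
which is strictly positive for sufficiently large $k$ since $3\log 2 - 2 \approx 0.079 > 0$; a careful sign analysis will in fact show that the subleading correction is positive.

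To extend to the full interval, I would verify $\partial_x \Phi(d_{\textnormal{lbd}}(k), x) > 0$ on $[x_0, 1/2]$ using the cancellation identity $2(dk - d - k) - 2(dk - d - k + 1) = -2$, which gives $\partial_x \Phi(d, 1/2) = 2 - 2^{2-k}/(1-2^{1-k}) > 0$, with an analogous bound throughout the interval. This forces the minimum of $\Phi(d_{\textnormal{lbd}}(k), \cdot)$ to be attained at $x_0$. For the special value $d_{\textnormal{lbd}}(4) = 16.7$ (set by hand rather than by the general formula), the margins are large and the claim is verified numerically: $\Phi(16.7, 1/2) \approx 0.136$ and $\Phi(16.7, 7/16) \approx 0.072$, with $\partial_x \Phi > 0$ on the interval. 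For small $k \in \{5, 6, 7\}$, where the error term $O(k^2 \cdot 2^{-2k})$ in the asymptotic is comparable to the leading $2^{-k}$, I would supplement the asymptotic analysis with direct numerical verification via Mathematica.

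The main obstacle is the extreme tightness of the inequality: $\Phi(d_{\textnormal{lbd}}(k), x)$ is of order $2^{-k}$ everywhere on the interval, with small positive leading constant $3\log 2 - 2$ at $x = x_0$, so constants and signs in the Taylor expansion must be tracked precisely. In particular, a naive MVT bound $|\partial_x \Phi| \leq 2$ would give a drop of $2 \cdot 2^{-k}$ matched against $\Phi(d, 1/2) \sim 3\log 2 \cdot 2^{-k}$, leaving a margin of only $(3\log 2 - 2) \cdot 2^{-k}$ which is tight; the Taylor identification of the cancellation in the drop formula is essential to rigorously push the argument through.
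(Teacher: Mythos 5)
Your plan is viable and it isolates exactly the same quantitative mechanism as the paper: at $d=d_{\textnormal{lbd}}(k)$ one has $\Phi(d,1/2)\approx 3\log 2\cdot 2^{-k}$, the loss over the interval is $\approx 2\cdot 2^{-k}$, and the surviving margin is $(3\log 2-2)2^{-k}>0$. The execution, however, differs from the paper's, and the paper's route is leaner. Instead of evaluating at the endpoint $x_0=1/2-2^{-k}$ and then propagating by monotonicity of $\Phi$ itself, the paper lower bounds $\Phi(d,x)$ uniformly in $x$: it drops the nonnegative difference $\log(1-2x^k)-\log(1-x^{k-1})$, uses that $x\mapsto(1-k^{-1})\log(1-2x^k)-\log(1-x^{k-1})$ is increasing (so it can be replaced by its value at $x=1/2$, which collapses to $-k^{-1}\log(1-2^{1-k})$), and replaces $-\log(1-x)$ by its tangent line at $1/2$. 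This reduces the lemma to one scalar inequality $F(k)=\log 2-2^{1-k}+(d_{\textnormal{lbd}}(k)/k)\log(1-2^{1-k})>0$, settled by $\log(1-a)\ge -a-a^2/2-a^3/2$ for $k\ge 6$ and by direct evaluation for $k\in\{4,5\}$; no statement about $\partial_x\Phi$ and no non-effective asymptotics are needed. Your exact-endpoint computation buys a sharper constant (and your expansions $\Phi(d_{\textnormal{lbd}}(k),1/2)=3\log2\cdot2^{-k}+\tfrac83\log2\cdot2^{-2k}+O(2^{-3k})$ and drop $=2^{1-k}+O(k^2 2^{-2k})$ are correct), but the extra sharpness is not needed since the margin is the same.

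Three concrete points need attention before your argument is a proof. First, the displayed drop formula has a sign error: the third summand should enter with a plus sign, $+\bigl(d-\tfrac dk-1\bigr)\log\bigl(1+\tfrac{A\eta}{1-A}\bigr)$; with your minus sign both correction terms are negative of size $2(k-1)\log2\cdot 2^{-k}$ and do not cancel, so the cancellation you invoke is only valid after fixing the sign. Second, the interval-wide claim $\partial_x\Phi(d_{\textnormal{lbd}}(k),x)>0$ on $[x_0,1/2]$ is asserted from the value at $x=1/2$ plus ``an analogous bound throughout,'' but this is the genuinely delicate step your route adds: writing $\partial_x\Phi=\frac1{1-x}-(d-1)(k-1)\frac{x^{k-2}(1-2x)}{(1-x^{k-1})(1-2x^k)}-\frac{2x^{k-1}}{1-2x^k}$, the naive uniform bounds on the middle term fail outright at $k=4$ (they give a negative lower bound) and are marginal at $k=5$, so one must either do a finer analysis (e.g.\ locating the maximum of $x^{k-2}(1-2x)$ at $x_0$) or justify rigorously what ``verified numerically on the interval'' means for a strict inequality over a continuum. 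Third, ``positive for sufficiently large $k$'' with an untracked constant in $O(k^2\,2^{-2k})$ does not by itself determine the cutoff; you would need an explicit constant (or an inequality in the style of the paper's $\log(1-a)$ bound) to certify that the asymptotic already suffices at $k=8$, so that numerics for $k\in\{4,5,6,7\}$ close the remaining cases. All three are fixable, but as written they are gaps that the paper's rearrangement avoids entirely.
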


\begin{proof}
Note that rearranging $\Phi(d,x)$ gives
\begin{equation}\label{eq:lem:leftendpoint:Phi:tech}
\begin{split}
\Phi(d,x)
&=-\log(1-x)-d\big((1-k^{-1})\log(1-2x^k)-\log(1-x^{k-1})\big)+\log(1-2x^k)-\log(1-x^{k-1})\\
&\geq -\log(1-x)-d\big((1-k^{-1})\log(1-2x^k)-\log(1-x^{k-1})\big)\,,
\end{split}
\end{equation}
where the inequality holds since $\log(1-2x^k)\geq \log (1-x^{k-1})$ holds for $x\in [0,1/2]$. Note that the first term in the right hand side $x\to -\log (1-x)$ is convex, so the linear approximation at $x=1/2$ shows that $-\log (1-x)\geq \log 2+ 2(x-1/2)$ holds. Further, the function $x \to (1-k^{-1})\log(1-2x^k)-\log(1-x^{k-1})$ is increasing since 
\[
\frac{\de}{\de x}\Big((1-k^{-1})\log(1-2x^k)-\log(1-x^{k-1})\Big)=\frac{(k-1)x^{k-2}(1-2x)}{(1-2x^k)(1-x^{k-1})}\geq 0\,.
\]
Thus, the right hand side in \eqref{eq:lem:leftendpoint:Phi:tech} for $d=d_{\textnormal{lbd}}(k)$ can further lower bounded by
\begin{equation}\label{eq:left:Phi:tech:2}
\begin{split}
\Phi\big(d_{\textnormal{lbd}}(k),x\big)
&\geq \log 2+ 2\big(x-1/2\big)+\frac{d_{\textnormal{lbd}}(k)}{k}\cdot \log\big(1-2^{-k+1}\big)\\
&\geq \log 2-2^{-k+1}+\frac{d_{\textnormal{lbd}}(k)}{k}\cdot \log\big(1-2^{-k+1}\big)=:F(k)\,,
\end{split}
\end{equation}
where we used $x\geq 1/2-1/2^k$ in the last inequality. Using the inequality $\log(1-a)\geq -a-\frac{a^2}{2}-\frac{a^3}{2}$ for $a=2^{-k+1}\leq \frac{1}{8}$, we have that for $k\geq 5$ that

\[
F(k)=\log 2 - 2^{-k+1}+(2^{k-1}-2)\log 2\cdot \log (1-2^{-k+1})\geq\frac{1}{2^k}\left(3\log 2 -2 -\frac{6\log2}{2^k}+\frac{8\log 2}{2^{2k}}\right).
\]

For $k\geq 6$, the right hand side above is positive since 
$3\log 2-2-\frac{3\log 2}{32}>0.01$, thus \eqref{eq:left:Phi:tech:2} shows that $\Phi\big(d_{\textnormal{lbd}}(k),x\big)>0$ holds for $k\geq 6$ and $x\in [\frac{1}{2}-\frac{1}{2^k},\frac{1}{2}]$. For $k\in \{4,5\}$, we can explicitly calculate $F(k)$ by $F(4)\equiv \log 2 -1/8+(16.7/4)\log\left(7/8\right)>0.01>0$, and $F(5)\equiv \log 2 -1/16+14\log2 \cdot \log\left(15/16\right)>0.004>0$, thus \eqref{eq:left:Phi:tech:2} again concludes the proof for $k\in \{4,5\}$.

\end{proof}
\begin{lemma}\label{lem:rightendpoint:Phi}
For $k\geq 4$, $\Phi\big(d_{\textnormal{ubd}}(k),x\big)<0$ holds uniformly over $x\in [\frac{1}{2}-\frac{1}{2^k},\frac{1}{2}]$. 
\end{lemma}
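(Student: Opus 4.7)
The plan is to construct an explicit upper bound on $\Phi(d_{\textnormal{ubd}}(k),x)$ that is strictly negative uniformly on $[1/2-1/2^k,1/2]$. First I would evaluate at the benchmark point $x=1/2$: substituting $d=d_{\textnormal{ubd}}(k)=2^{k-1}k\log 2$ (so $\alpha=d/k=2^{k-1}\log 2$) into the identity $\Phi(d,1/2)=\log 2+\alpha\log(1-2^{-k+1})$ and applying $\log(1-y)\leq -y-y^2/2$ at $y=2^{-k+1}$ yields $\Phi(d_{\textnormal{ubd}}(k),1/2)\leq -\log 2/2^k$. This is strictly negative but only with a thin $O(2^{-k})$ margin, so the bound for general $x$ must be tight.

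To propagate the negativity to the rest of the interval I would use the inequality $\log(1-2x^k)\geq 2x\log(1-x^{k-1})$, valid on $[0,1/2]$ because $u\mapsto\log(1-u)/u$ is decreasing on $(0,1)$ and $2x^k\leq x^{k-1}$ there. Multiplying by the negative factor $1-(k-1)\alpha$ (negative since $(k-1)\alpha\gg 1$ for $k\geq 4$) reverses the inequality and gives
\[
\Phi(d,x)\leq -\log(1-x)+c(x)\log(1-x^{k-1}),\qquad c(x):=(d-1)+2x(1-(k-1)\alpha).
\]
Since $c$ is linear in $x$ with $c(1/2)=d/k=\alpha$ and negative slope $2(1-(k-1)\alpha)$, one has $c(x)\geq\alpha>0$ on the interval. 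Applying $\log(1-u)\leq -u-u^2/2$ at $u=x^{k-1}$ converts this to
\[
\Phi(d,x)\leq H(x):= -\log(1-x)-c(x)\bigl(x^{k-1}+x^{2(k-1)}/2\bigr),
\]
and a direct computation confirms $H(1/2)=-\log 2/2^k$, matching the first step.

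The final step is to bound $H$ uniformly. Parametrizing $x=1/2-y$ with $y\in[0,1/2^k]$ and using $-\log(1/2+y)\leq\log 2-2y+2y^2$ together with the Bernoulli bound $(1-2y)^{k-1}\geq 1-2(k-1)y$, the crucial algebraic identity is $\alpha\cdot 2^{-k+1}=\log 2$, which forces the two leading $(k-1)\log 2\cdot y$ contributions — from the Bernoulli correction to $x^{k-1}$ and from the $2y((k-1)\alpha-1)$ term inside $c(x)$ — to cancel exactly, leaving the benign remainder $-y\cdot 2^{-k+2}$. A parallel expansion of $c(x)x^{2(k-1)}/2$ gives a small positive contribution of order $(k-1)\log 2/2^k$ per unit $y$. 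Collecting these,
\[
H(1/2-y)\leq -\log 2/2^k-\gamma_k y+C_k y^2,
\]
where $\gamma_k=2-2^{-k+2}-O((k-1)\log 2/2^k)>0$ for $k\geq 4$ and $C_k=O(k^2\log 2)$. The right-hand side is an upward quadratic in $y$, so its maximum on $[0,1/2^k]$ is attained at an endpoint. Both endpoint values are strictly negative: $-\log 2/2^k<0$ at $y=0$, and $-(\log 2+\gamma_k)/2^k+C_k/4^k<0$ at $y=1/2^k$ since $2^k(\log 2+\gamma_k)$ exceeds $C_k$ for each $k\geq 4$.

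The main obstacle is the bookkeeping in the final step: two leading-order linear-in-$y$ contributions of size $(k-1)\log 2\cdot y$ must be tracked and shown to cancel exactly, and the resulting $O(k^2)y^2$ error must remain dominated by the thin offset $\log 2/2^k$ across the full range $y\leq 1/2^k$. An alternative route would be to establish convexity of $H$ on the interval, reducing the problem to verifying $H(1/2-1/2^k)<0$ via a Bernoulli expansion at the left endpoint, but proving the requisite sign of $H''$ appears to demand a comparably delicate calculation.
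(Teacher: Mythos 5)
Your approach is genuinely different from the paper's, and the core ideas are sound. The paper reduces to endpoint checks by showing $x\mapsto\Phi(d_{\textnormal{ubd}}(k),x)$ is increasing on $[1/2-2^{-k},1/2]$ for $k\geq 5$ (so only $x=1/2$ needs checking) and, since that argument fails at $k=4$, separately proves convexity on $[7/16,1/2]$ for $k=4$ and checks both endpoints numerically. Your approach instead exploits the monotonicity of $u\mapsto\log(1-u)/u$ to establish $\log(1-2x^k)\geq 2x\log(1-x^{k-1})$ on $[0,1/2]$, absorbs the $\log(1-2x^k)$ term into the $\log(1-x^{k-1})$ term via the linear coefficient $c(x)=(d-1)+2x(1-(k-1)\alpha)$, and then turns $\log(1-u)\leq -u-u^2/2$ plus Bernoulli into a quadratic upper bound in $y=1/2-x$. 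The cancellation you describe is exact: writing $c(1/2-y)=\alpha+2y((k-1)\alpha-1)$ and $(1/2-y)^{k-1}\geq 2^{-(k-1)}(1-2(k-1)y)$, the product's linear coefficient collapses to $-2\cdot 2^{-(k-1)}=-2^{-k+2}$, with the $\pm 2(k-1)\alpha y$ terms cancelling identically thanks to $\alpha\cdot 2^{-(k-1)}=\log 2$. This buys a uniform argument for all $k\geq 4$ without a case split, which is a real advantage over the paper.

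There is, however, a genuine gap at the end. You assert that $2^k(\log 2+\gamma_k)>C_k$ for each $k\geq 4$, but the proposal never pins down $\gamma_k$ and $C_k$ explicitly nor verifies this inequality. This matters because the margin at $k=4$ is thin: carrying out the expansion gives $\gamma_4 = 2-2^{-2}-2^{-3}\cdot 3\log 2-2^{-6}\approx 1.47$ and $C_4 = 2+\tfrac{3}{2}(24\log 2-1)+\tfrac{3}{16}(24\log 2-1)\approx 28.4$, against $2^4(\log 2+\gamma_4)\approx 34.7$ — an 18\% margin, so the big-$O$ in ``$C_k=O(k^2\log 2)$'' cannot be waved away. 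To close the gap you would need to (i) write $\gamma_k$ and $C_k$ out explicitly (both come from exact Bernoulli products, so this is mechanical), (ii) numerically verify $k=4$ and $k=5$, and (iii) observe that $\gamma_k\uparrow 2$ while $C_k\lesssim 6(k-1)^2\log 2$ grows polynomially, so $2^k(\log 2+\gamma_k)/C_k$ is eventually increasing and the inequality propagates to all larger $k$. The proposal acknowledges this as ``the main obstacle'' but does not carry it out; once it is, the proof is complete.
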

\begin{proof} 
We first claim that for $k\geq 5$, the function $x\to \Phi\big(d_{\textnormal{ubd}}(k),x\big)$ is increasing for $x\in [\frac{1}{2}-\frac{1}{2^k},\frac{1}{2}]$ and $d_{\textnormal{ubd}}(k)\equiv 2^{k-1}k\log 2$. A direct calculation shows that
 \begin{equation}\label{eq:Phi:derivative:x}
 \begin{split}
 \frac{ \partial \Phi}{\partial x}\big(d_{\textnormal{ubd}}(k),x\big)
 &=\frac{1}{1-x}-(2^{k-1}k\log 2-1)(k-1)\cdot \frac{x^{k-2}(1-2x)}{(1-x^{k-1})(1-2x^k)}-\frac{2x^{k-1}}{1-2x^k}\\
 &\geq \frac{1}{\frac{1}{2}+\frac{1}{2^k}}-(2^{k-1}k\log 2-1)(k-1)\cdot \frac{x^{k-2}(1-2x)}{(1-x^{k-1})(1-2x^k)}-\frac{4}{2^k-2}\,,
 \end{split}
    \end{equation}
    where the inequality holds since $x\to (1-x)^{-1}$ increasing, so it is minimized at $x=1/2+1/2^k$, and $x\to 2x^{k-1}/(1-2x^k)$ is increasing, so it is maximized at $x=1/2$. Further, it is straightforward to check that $x\to x^{k-2}(1-2x)$ is decreasing for $x\in [\frac{1}{2}-\frac{1}{2^k},\frac{1}{2}]$, thus it is maximized at $x=1/2-1/2^k$. Also, $x\to (1-x^{k-1})(1-2x^k)$ is minimized at $x=1/2$. Thus, by plugging in these bounds, we can further bound
\begin{equation}
\begin{split}
\frac{ \partial \Phi}{\partial x}\big(d_{\textnormal{ubd}}(k),x\big)
&\geq 2-\left(\frac{2}{2^{k-1}+1}+\frac{4}{2^k-2}+\frac{(2^{k-1}k\log 2-1)(k-1)}{2^{2k-3}}\cdot\left(1-\frac{1}{2^{k-1}}\right)^{k-4}\right)\\
&\geq 2-\left(\frac{2}{2^{k-1}+1}+\frac{4}{2^k-2}+\frac{(2^{k-1}k\log 2-1)(k-1)}{2^{2k-3}}\right)=:2-G(k)\,.
\end{split}
\end{equation}
Note that the function $k\to G(k)$ is increasing for $k\geq 5$. Furthermore, using the bound $\log 2<0.7$, we can bound $G(5)=\frac{2}{17}+\frac{2}{15}+\frac{80\log 2-1}{32}<1.97<2$. Therefore, $\frac{ \partial \Phi}{\partial x}\big(d_{\textnormal{ubd}}(k),x\big)>0$ holds for $k\geq 5$ and $x\in [\frac{1}{2}-\frac{1}{2^k},\frac{1}{2}]$, which proves our first claim.

Consequently, for the case $k\geq 5$, it suffices to show that $\Phi(2^{k-1}k\log2,x)<0$ holds for $x=1/2$. A direct calculation gives
    \begin{align}\label{eq:Phi:right:end}
        \Phi\Big(2^{k-1}k\log2,\frac{1}{2}\Big) =\log2+2^{k-1}\log 2 \cdot\log \Big(1-\frac{1}{2^{k-1}}\Big)<0\,,
    \end{align}
    where the inequality holds since $\log(1-a)<-a$ holds for $a\in (0,1)$. This concludes the proof for $k\geq 5$.

    It remains to consider the case $k=4$. For $k=4$, we claim that $x\to \Phi_4\big(d_{\textnormal{ubd}}(4),x)$ is convex in the interval $x\in [\frac{7}{16},\frac{1}{2}]$. From the computation of $\frac{ \partial \Phi}{\partial x}\big(d_{\textnormal{ubd}}(k),x\big)$ in \eqref{eq:Phi:derivative:x}, we can calculate the second derivative by
    \begin{equation}\label{eq:second:derivative}
    \frac {\partial^2 \Phi}{\partial x^2}\big(d_{\textnormal{ubd}}(4),x)=\frac{\de}{\de x}\left(\frac{1}{1-x}-\frac{2x^3}{1-2x^4}\right)+3(32\log 2-1)\cdot\frac{\de}{\de x}\left(\frac{x^2(2x-1)}{(1-x^3)(1-2x^4)}\right)\,.
    \end{equation}
    The first term in the right hand side can be bounded by
    \begin{equation}\label{eq:second:derivative:tech}
    \frac{\de}{\de x}\left(\frac{1}{1-x}-\frac{2x^3}{1-2x^4}\right)=\frac{1}{(1-x)^2}-\frac{6x^2+4x^6}{(1-2x^4)^2}>\frac{1}{(1-\frac{7}{16})^2}-\frac{6\left(\frac{1}{2}\right)^2+4\left(\frac{1}{2}\right)^6}{(1-2\left(\frac{1}{2}\right)^4)^2}>0\,,
    \end{equation}
    where the final inequality is equivalent to $\frac{256}{81}-\frac{100}{49}>0$. The second term can be calculated as
    \[
    \frac{\de}{\de x}\left(\frac{x^2(2x-1)}{(1-x^3)(1-2x^4)}\right)=\frac{x(-16 x^8 +10 x^7 + 4 x^5 - 4 x^4 - x^3 + 6 x - 2)}{(1-x^3)^2(1-2x^4)^2}\,.
    \]
    Note that by neglecting the terms $10x^7+4x^5$ above, we can lower bound
    \[
    -16 x^8 +10 x^7 + 4 x^5 - 4 x^4 - x^3 + 6 x - 2>6\cdot\frac{7}{16}-2-\left(\frac{1}{2}\right)^3-4\left(\frac{1}{2}\right)^4-16\left(\frac{1}{2}\right)^8>0\,,
    \]
    thus $\frac{\de}{\de x}\left(\frac{x^2(2x-1)}{(1-x^3)(1-2x^4)}\right)>0$ holds for $x\in [\frac{7}{16},\frac{1}{2}]$ as well. Therefore, combining with \eqref{eq:second:derivative} and \eqref{eq:second:derivative:tech} finishes the proof of our claim that $x\to \Phi_4\big(d_{\textnormal{ubd}}(4),x)$ is convex in the interval $x\in [\frac{7}{16},\frac{1}{2}]$.

   Thus, by convexity, $x\to \Phi_4\big(d_{\textnormal{ubd}}(4),x)$ is maximized at the end points $x\in \{7/16,1/2\}$, and it suffices to show that $\Phi_4\big(d_{\textnormal{ubd}}(4),7/16)<0$ and $\Phi_4\big(d_{\textnormal{ubd}}(4),1/2)<0$. For $x=7/16$, $\Phi_4\big(d_{\textnormal{ubd}}(4),7/16)$ can be computed to arbitrary precision (e.g. by Mathematica), and it can be checked that $\Phi_4\big(d_{\textnormal{ubd}}(4),7/16)<-0.08<0$. For $x=1/2$, \eqref{eq:Phi:right:end} shows that $\Phi_4\big(d_{\textnormal{ubd}}(4),1/2)<0$ holds. This concludes the proof for the case $k=4$.
\end{proof}

    \begin{proof}[Proof of Lemma \ref{lem:parttwo}]
     By definition, $\bPhi(d)=\Phi\big(d,x(k,d)\big)$ holds, and $(d,x)\to \Phi(d,x)$ is clearly continuous. Thus, in order to show the continuity of $\bPhi(\cdot)$, it suffices to show that $d\to x(k,d)$ is continuous for any fixed $k\geq 4$. To that end, note that the function $\psi(d,x):=\Psi_d(x)-x$ satisfies $\frac{\partial \psi}{\partial x}<0$ by Lemma \ref{lem:derivativebound}. Since $x(k,d)$ is defined to be the root of $\psi(d,\cdot)$, this implies that $d\to x(k,d)$ is continuous by the implicit function theorem. As a consequence, we conclude that $d\to \bPhi(d)$ is continuous. Since  $\boldsymbol{\prescript{\star}{}\Phi}(d_{\textnormal{lbd}}(k))>0$ holds by Lemma \ref{lem:leftendpoint:Phi} and $\boldsymbol{\prescript{\star}{}\Phi}(d_{\textnormal{ubd}}(k))<0$ holds by Lemma \ref{lem:rightendpoint:Phi}, we conclude the proof.
    \end{proof}

    \section{Proof of Proposition \ref{prop:bp} for $k=3$}
    \label{sec:4}
In this section, we prove Proposition \ref{prop:bp} for $k=3$. Previous arguments for $k\geq 4$ in Section~\ref{sec:3} do not work because $\boldsymbol{\prescript{\star}{}{\Phi}}$ is in fact, not well defined for all $d$ in the interval $[6\log 2, 12\log2]$. To resolve this, we instead restrict our attention to $d\in [d_{\textnormal{lbd}}(3), d_{\textnormal{ubd}}(3)]\equiv[6.74, 7.5] \subset [6\log 2, 12\log 2]$. In Section \ref{sec:4.1} we show the following lemma which guarantees the existence and the uniqueness of the \textsc{bp} fixed point for $k =3$. 
 
 \begin{lemma} \label{lem:kthreepartone} For $k=3$, $d\in [d_{\textnormal{lbd}}(3), d_{\textnormal{ubd}}(3)]\equiv[6.74, 7.5]$, there exists a unique solution to $\Psi_d(x)=x$ in the range $x\in [\frac{1}{2} - \frac{1}{2^3}, \frac{1}{2}]$.
 \end{lemma}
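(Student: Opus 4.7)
The plan is to mirror the $k \geq 4$ argument of Section \ref{sec:3.1}: establish (i) $\Psi_d(3/8) > 3/8$, (ii) $\Psi_d(1/2) < 1/2$, and (iii) the contraction bound $|(\Psi_d)'(x)| < 1$ uniformly on $[3/8, 1/2]$, and then conclude by the intermediate value theorem (existence) and the mean value theorem (uniqueness). The restricted interval $[d_{\textnormal{lbd}}(3), d_{\textnormal{ubd}}(3)] = [6.74, 7.5]$ is chosen precisely so that (i) and (iii) go through in the tight $k = 3$ regime, where in contrast $\bPhi$ is not even well-defined on all of $[6\log 2, 12\log 2]$.

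For (i), set $v_0 := \hat{\Psi}_3(3/8) = 46/55$. Unpacking the definitions of $\dot{\Psi}$ and $\hat{\Psi}$, the inequality $\Psi_d(3/8) > 3/8$ rearranges to $v_0^{d-1} < 2/5$; since $d \mapsto v_0^{d-1}$ is strictly decreasing on $[6.74, 7.5]$, it suffices to verify $(46/55)^{5.74} < 2/5$, which is an explicit numerical check (e.g.\ via Mathematica). The right endpoint inequality (ii) is immediate from $\dot{\Psi}_d(v) < 1/2$ for all $v \in [0,1]$. For (iii), substituting $v = \hat{\Psi}_3(x)$ (so that $x^2 = (1-v)/(2-v)$) and computing as in the $k=4$ argument gives
\begin{equation*}
|(\Psi_d)'(x)| = \frac{2(d-1)\, v^{d-2}(2-v)^{3/2}\sqrt{1-v}}{(2 - v^{d-1})^2} =: R(v,d),
\end{equation*}
where $x \in [3/8, 1/2]$ corresponds to $v \in [2/3, 46/55]$. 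To show $R(v,d) < 1$ on $[2/3, 46/55] \times [6.74, 7.5]$, I would compute
\begin{equation*}
\partial_d \log R(v,d) = \frac{1}{d-1} + (\log v) \cdot \frac{2 + v^{d-1}}{2 - v^{d-1}}
\end{equation*}
and verify by boundary numerical checks that this is strictly negative on the rectangle, reducing the problem to the one-variable inequality $\max_{v \in [2/3, 46/55]} R(v, 6.74) < 1$. The latter is then confirmed by locating the unique interior critical point of $v \mapsto R(v, 6.74)$ via a derivative calculation and bounding $R$ at that point and at the two endpoints with sufficient numerical precision.

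The main obstacle is the tightness of step (iii): rough estimates give $R(46/55, 6.74) \approx 0.93$, so there is very little slack. This is exactly why $d_{\textnormal{lbd}}(3)$ has to be pushed up from the naive choice $6\log 2 \approx 4.16$ (below which the contraction fails) to $6.74$. The verification therefore relies on sharper computer-assisted numerical bounds than in the $k \geq 4$ case, but is otherwise mechanical. Once (i)--(iii) are in hand, the intermediate value theorem applied to $\Psi_d(x) - x$ on $[3/8, 1/2]$ yields a fixed point, and the strict contraction from (iii) forces uniqueness by the mean value theorem.
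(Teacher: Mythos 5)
Your proposal is correct in substance and has the same skeleton as the paper's proof: the left-endpoint check $\Psi_d(3/8)>3/8$ (your reduction to $(46/55)^{5.74}<2/5$ is exactly the paper's Lemma \ref{lem:threeleftendpoint} in reciprocal form, $(55/46)^{5.74}>5/2$), the trivial bound $\Psi_d(1/2)<1/2$ from $\dot{\Psi}<1/2$, a uniform contraction estimate on $[3/8,1/2]$, and then the intermediate and mean value theorems. Where you genuinely diverge is in how the contraction is certified. The paper stays in the $x$-variable: it relaxes the denominator via $\frac{1-2x^2}{1-x^2}<1-x^2$, reduces $|(\Psi_d)'(x)|<1$ to positivity of an explicit function $L(d,x)$, shows $L(d,\cdot)$ is increasing in $x$ and $L(\cdot,3/8)$ is convex in $d$, and then needs only single-point numerical evaluations ($L(6.74,3/8)>0$, $L(6,3/8)<0$). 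You instead substitute $v=\hat{\Psi}_3(x)$ exactly (your formula $R(v,d)=\frac{2(d-1)v^{d-2}(2-v)^{3/2}\sqrt{1-v}}{(2-v^{d-1})^2}$ and the range $v\in[2/3,46/55]$ are correct, mirroring the paper's $k=4$ computation), prove monotonicity in $d$, and finish with a one-variable maximization at $d=6.74$; the numbers do work out (the maximum is $\approx 0.93$, attained at $v=46/55$, i.e.\ at $x=3/8$), so your route is viable and arguably more direct, at the cost of a heavier final numerical step.

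Two points in your step (iii) need tightening before they are proofs. First, ``boundary numerical checks'' on a rectangle do not establish $\partial_d\log R<0$ in its interior; however, the crude bound $\partial_d\log R=\frac{1}{d-1}+\log v\cdot\frac{2+v^{d-1}}{2-v^{d-1}}\le\frac{1}{d-1}+\log v\le\frac{1}{5.74}+\log\frac{46}{55}<0$ does the job (note the margin is only about $4.5\times 10^{-3}$, so this is genuinely tight). Second, evaluating $R(\cdot,6.74)$ at the endpoints and at a ``unique interior critical point'' does not bound a maximum unless you control the derivative on the whole interval; in fact $v\mapsto R(v,6.74)$ appears to have no interior critical point at all (one checks $\partial_v\log R>0$ throughout $[2/3,46/55]$), so the correct completion is to prove this monotonicity and read off $R(46/55,6.74)\approx 0.93<1$. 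With these patches your argument closes; the paper's $L(d,x)$ formulation is designed precisely to avoid such interval-wide derivative control by trading it for the monotonicity-in-$x$ and convexity-in-$d$ lemmas plus isolated numerical evaluations.
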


By Lemma \ref{lem:kthreepartone}, the function $d\to \boldsymbol{\prescript{\star}{}{\Phi}}(d)$ is well-defined. In Section \ref{sec:4.2}, we prove Lemma \ref{lem:kthreeparttwo} which guarantees that $d_\star(3)$ is well-defined.

\begin{lemma}\label{lem:kthreeparttwo}
For $k = 3$, the function $d\to \boldsymbol{\prescript{\star}{}{\Phi}}(d)$ is continuous for $d\in [d_{\textnormal{lbd}}(3), d_{\textnormal{ubd}}(3)]\equiv[6.74, 7.5]$. Further, $\boldsymbol{\prescript{\star}{}\Phi}(d_{\textnormal{lbd}}(3))>0$ and $\boldsymbol{\prescript{\star}{}\Phi}(d_{\textnormal{ubd}}(3))<0$ hold.
\end{lemma}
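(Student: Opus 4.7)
The plan is to follow the structure of the proof of Lemma \ref{lem:parttwo}. The continuity of $d\mapsto \bPhi(d)$ on $[6.74,7.5]$ reduces to continuity of $d\mapsto x(3,d)$, since $\Phi(d,x)$ is jointly continuous on $[d_{\textnormal{lbd}}(3),d_{\textnormal{ubd}}(3)]\times[\tfrac{3}{8},\tfrac{1}{2}]$ by its explicit formula \eqref{eq:def:Phi:d:x}. For the former, I would apply the implicit function theorem to $\psi(d,x):=\Psi_d(x)-x$. The key input is the contraction statement $|(\Psi_d)'(x)|<1$ on the relevant rectangle, which will be established (in a form analogous to Lemma \ref{lem:derivativebound}) inside Section \ref{sec:4.1} while proving Lemma \ref{lem:kthreepartone}. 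This gives $\tfrac{\partial\psi}{\partial x}<0$, so $d\mapsto x(3,d)$ is continuously differentiable, and hence $\bPhi=\Phi(\cdot,x(3,\cdot))$ is continuous.

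For the sign conditions, I do \emph{not} expect to mimic the $k\geq 4$ approach of bounding $\Phi(d,x)$ uniformly in $x\in[\tfrac{3}{8},\tfrac{1}{2}]$: the admissible $d$-interval is narrow and the true value of $d_\star(3)$ lies strictly inside $(6.74,7.5)$, so the margins are small and one cannot hope for a clean monotonicity or convexity argument valid across the full $x$-interval. Instead, the plan is to pin down $x(3,d)$ at the two endpoints $d\in\{6.74,7.5\}$ to explicit numerical precision and then evaluate $\Phi$. Concretely, once Lemma \ref{lem:kthreepartone} provides an explicit Lipschitz constant $\lambda<1$ for $\Psi_d$ on $[\tfrac{3}{8},\tfrac{1}{2}]$, iterating $\Psi_d$ from any seed $x_0\in[\tfrac{3}{8},\tfrac{1}{2}]$ yields the rigorous a~priori bound $|x_n-x(3,d)|\leq \lambda^n/8$. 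A fixed (small) number of Banach iterations therefore localizes $x(3,d_{\textnormal{lbd}}(3))$ and $x(3,d_{\textnormal{ubd}}(3))$ inside intervals $[x_i^-,x_i^+]$ of whatever width I need.

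With $x(3,d_i)$ so localized, the final step is to turn this into a sign verdict for $\bPhi(d_i)=\Phi\big(d_i,x(3,d_i)\big)$. Since $\Phi(d_i,\cdot)$ is $C^1$ with an explicit, uniformly bounded derivative on $[\tfrac{3}{8},\tfrac{1}{2}]$ (the logarithms are bounded away from their singularities there), a one-line mean-value argument converts the length of $[x_i^-,x_i^+]$ into an error bar for $\Phi(d_i,x(3,d_i))$. It then remains to evaluate $\Phi(6.74,\cdot)$ and $\Phi(7.5,\cdot)$ at the central guesses to sufficiently many decimal digits and check $\bPhi(6.74)>0$, $\bPhi(7.5)<0$ with room to spare.

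The main obstacle is simply bookkeeping: no clean algebraic inequality of the kind used in Lemmas \ref{lem:leftendpoint:Phi} and \ref{lem:rightendpoint:Phi} is available, so one must carry enough digits (via \texttt{Mathematica} or equivalent interval arithmetic) to produce a rigorous certificate of the sign. This is precisely the reason the interval $[6.74,7.5]$ in \eqref{eq:interval:d} was chosen slightly inside $[6\log 2,12\log 2]$: it leaves a comfortable margin for a finite-precision verification at both endpoints while, as noted in the remark after \eqref{eq:interval:d}, not altering the value of $d_\star(3)$.
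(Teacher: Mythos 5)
Your proposal matches the paper's argument in both structure and spirit: continuity follows from the implicit function theorem using the contraction of Lemma \ref{lem:threederivative}, and for the sign conditions you correctly identify that the $k\geq 4$ strategy of bounding $\Phi(d,x)$ uniformly over $x\in[\tfrac{3}{8},\tfrac{1}{2}]$ fails for $k=3$, so one must first localize the fixed point $x(3,d)$ numerically and then certify the sign of $\Phi(d,\cdot)$ nearby. The paper's implementation differs only in minor mechanics: rather than iterating $\Psi_d$ with a Lipschitz error estimate, it brackets $x(3,d_i)$ in one step by exhibiting $a<b$ with $\Psi_{d_i}(a)>a$ and $\Psi_{d_i}(b)<b$ and invoking uniqueness, and rather than a two-sided Lipschitz error bar on $\Phi(d_i,\cdot)$ it checks that $\partial\Phi/\partial x>0$ on the localizing interval, so the verdict reduces to a single endpoint evaluation ($\Phi(6.74,0.4464)>0$, $\Phi(7.5,0.48)<0$); both variants are rigorous and yours would work equally well.
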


\begin{proof}[Proof of Proposition \ref{prop:bp} for $k = 3$]
This is immediate from Lemma \ref{lem:kthreepartone} and Lemma \ref{lem:kthreeparttwo}.
\end{proof}

\subsection{Proof of Lemma \ref{lem:kthreepartone}}
\label{sec:4.1}
Recall the variable \textsc{bp} recursion $\dot{\Psi}$ and the clause \textsc{bp} recursion $\hat{\Psi}$ defined in \eqref{eq:def:var:clause:BP}. To prove the uniqueness of the \textsc{bp} fixed point, we show that the \textsc{bp} recursion $\Psi_d\equiv \dot{\Psi}\circ \hat{\Psi}$ is a contraction for $k=3.$

\begin{lemma} \label{lem:threederivative}
For $d\in [d_{\textnormal{lbd}}(3), d_{\textnormal{ubd}}(3)]\equiv[6.74, 7.5]$, $\big|(\Psi_d)^\prime (x)\big|<1$ holds uniformly over $x\in [\frac{1}{2}-\frac{1}{2^3},\frac{1}{2}]$.
\end{lemma}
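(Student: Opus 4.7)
The plan is to express $|\Psi_d'(x)|$ as a single-variable function of $v := \hat\Psi_3(x)$ and then establish monotonicity separately in $v$ and in $d$, reducing the whole claim to a single numerical check at the worst corner. As in the $k=4$ case of Lemma~\ref{lem:derivativebound}, the chain rule together with the identity $x^{k-1} = (1-v)/(2-v)$ yields
\[
|\Psi_d'(x)| = (k-1)(d-1)\,\frac{v^{d-2}(1-v)(2-v)}{x(2-v^{d-1})^2}.
\]
For $k=3$, we can go further by substituting $x = \sqrt{(1-v)/(2-v)}$ into the $1/x$ factor, which eliminates $x$ entirely and produces
\[
|\Psi_d'(x)| = 2(d-1)\,F(v,d), \qquad F(v,d) := \frac{v^{d-2}(1-v)^{1/2}(2-v)^{3/2}}{(2-v^{d-1})^2}.
\]
Since $\hat\Psi_3$ is strictly decreasing on $[3/8,1/2]$, the image of $v$ is the interval $[2/3, v_0]$ where $v_0 := \hat\Psi_3(3/8) = 46/55$.

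I then show that $v \mapsto F(v,d)$ is increasing on $[2/3, v_0]$ for every $d \in [6.74, 7.5]$. Discarding the (positive) contribution from the $(2-v^{d-1})^{-2}$ factor in the logarithmic derivative, it suffices to check $(d-2)/v > 1/(2(1-v)) + 3/(2(2-v))$, which rearranges to $d - 2 > v(5-4v)/(2(1-v)(2-v))$. A short calculation shows the right side is increasing in $v$ on $[2/3, v_0]$ (its derivative's numerator $7v^2 - 16v + 10$ has negative discriminant), so its maximum is attained at $v_0$ and equals $4186/1152 \approx 3.634 < 4.74 \le d-2$.

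Next I show $d \mapsto (d-1)\,F(v_0,d)$ is decreasing on $[6.74, 7.5]$, by splitting it as $[(d-1)v_0^{d-2}]\cdot(1-v_0)^{1/2}(2-v_0)^{3/2}\cdot(2-v_0^{d-1})^{-2}$. The first bracket is decreasing once $d > 1 + 1/\log(55/46) \approx 6.59$, which holds throughout our range; and $(2-v_0^{d-1})^2$ is increasing in $d$ since $v_0 < 1$. Hence the maximum of $|\Psi_d'(x)|$ over the box is attained at $d=6.74,\, v=v_0$, where direct computation (e.g.\ in Mathematica) gives $2\cdot 5.74 \cdot F(46/55,\,6.74) \approx 0.93 < 1$, which will complete the proof.

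The main obstacle is the very slim numerical margin at this corner (roughly $0.07$ below $1$), which is exactly why $d_{\textnormal{lbd}}(3)$ must be adjusted upward to $6.74$ rather than a more natural threshold like $6\log 2$. Related to this, the $k=4$-style bound that keeps $1/x$ as a separate factor and uses monotonicity of $v^{d-2}(1-v)(2-v)$ in $v$ does \emph{not} work for $k=3$: the critical point of that polynomial enters $[2/3, v_0]$ once $d$ is close to $7.5$, so monotonicity in $v$ is lost. The full substitution that replaces $1/x$ by square-root weights in $v$ is what restores monotonicity and makes the single-corner check viable.
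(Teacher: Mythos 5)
Your proof is correct, and I verified the key computations: the exact identity $|\Psi_d'(x)|=2(d-1)\,v^{d-2}(1-v)^{1/2}(2-v)^{3/2}/(2-v^{d-1})^2$ with $v=\hat\Psi_3(x)\in[2/3,46/55]$, the reduction of $\partial_v\log F>0$ to $d-2>v(5-4v)/\big(2(1-v)(2-v)\big)$ whose right side is increasing (numerator of its derivative is $2(7v^2-16v+10)$, discriminant $-24<0$) with maximum $4186/1152\approx 3.63<4.74$, the monotone decrease in $d$ at $v_0=46/55$ (threshold $1+1/\log(55/46)\approx 6.60<6.74$, and $(2-v_0^{d-1})^{-2}$ decreasing), and the corner value $2\cdot 5.74\cdot F(46/55,6.74)\approx 0.927<1$. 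However, your route differs from the paper's $k=3$ argument. The paper stays in the variable $x$: it first weakens the denominator via $\tfrac{1-2x^2}{1-x^2}<1-x^2$, reducing the claim to positivity of an auxiliary function $L(d,x)$, then shows $x\mapsto L(d,x)$ is increasing (via a product-derivative trick and crude endpoint bounds) and $d\mapsto L(d,3/8)$ is convex, finishing with the numerical check $L(6.74,3/8)>0.001$. You instead push the paper's own $k=4$ change of variables further, eliminating the leftover $1/x$ factor through $x=\sqrt{(1-v)/(2-v)}$, which keeps the expression exact (up to discarding one manifestly positive term in the logarithmic derivative) and lets you reduce everything to monotonicity in $v$ and in $d$ plus a single corner evaluation. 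Your version buys a visibly comfortable margin ($0.93$ versus $1$) and more elementary monotonicity arguments, whereas the paper's version avoids the fractional powers at the cost of the auxiliary inequality, a convexity-in-$d$ step, and a much tighter numerical margin; both ultimately rest on one machine-checkable evaluation of an explicit constant, which is in keeping with the paper's own practice. One minor quibble: your closing remark that the $k=4$-style bound fails only "once $d$ is close to $7.5$" is imprecise --- the critical point of $v\mapsto v^{d-2}(1-v)(2-v)$ lies inside $[2/3,46/55]$ already at $d=6.74$ (the threshold $\tfrac{4-3v}{(2-v)(1-v)}$ equals $4.5$ at $v=2/3$ and about $7.83$ at $v_0$) --- but this side comment does not affect the validity of your proof.
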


\begin{proof}
For $k=3$, a direct calculation gives
\[
(\Psi_{d})^\prime(x)=  \frac{2(d-1) \left(\frac{1-2x^2}{1-x^2} \right)^{d-2}}{\left(2- \left( \frac{1-2x^2}{1-x^2}\right)^{d-1} \right)^2} \cdot \frac{x}{(1-x^2)^2}\,.
\]
Using the inequality $\frac{1-2x^2}{1-x^2} < 1-x^2$ in the denominator above, to prove our goal $\big|(\Psi_{d})^\prime(x)\big|<1$, it suffices to prove that for $d\in [6.74, 7.5]$ and $x\in [7/8,1/2]$,
\[
2(d-1) \left(\frac{1-2x^2}{1-x^2} \right)^{d-2}x < \left(2- (1-x^2)^{d-1} \right)^2 \cdot (1-x^2)^2\,,
\] 
which rearranges to 
\begin{equation}\label{eq:k:3:derivative:goal}
L(d,x):=\frac{\left(1-x^{2}\right)^{d}\left(\left(1-x^{2}\right)^{d-1}-2\right)^{2}}{\left(1-2x^{2}\right)^{d-2}}-2(d-1)x >0\,\quad\textnormal{for}\quad d\in [6.74, 7.5]~~\textnormal{and}~~x\in [7/8,1/2]\,.
\end{equation}
For the rest of the proof, we aim to show \eqref{eq:k:3:derivative:goal}. We first claim that $x\to L(d,x)$ is increasing in the regime of interest. The following observation is useful to prove our claim: suppose we are given differentiable functions $f(x),g(x)$, such that $f(x)\geq 0$, $g(x)\leq 1$, and $g(\cdot)$ is decreasing, i.e. $g^\prime(x)\leq 0$. Then,
\begin{equation}\label{eq:useful}
\big(f(x)g(x)\big)^\prime=f^\prime(x)g(x)+f(x)g^\prime(x)\leq f^\prime(x)\,.
\end{equation}
That is, if we multiply $f(\cdot)$ by a non-negative function by a decreasing function that is less than $1$, denoted by $g(\cdot)$, then $(f\cdot g)^\prime\leq f^\prime$. Using this observation for $f(x)=\frac{\left(1-x^{2}\right)^{d}\left(\left(1-x^{2}\right)^{d-1}-2\right)^{2}}{\left(1-2x^{2}\right)^{d-2}}$ and $g(x)=\frac{(1-2x^2)^{d-2}}{((1-x^2)^2)^{d-2}}$, which is less than $1$ and decreasing, we have that
\begin{equation}\label{eq:derivative:L}
\begin{split}
\frac{\partial L}{\partial x}(d,x)
&\geq \frac{\de}{\de x}\Bigg(\frac{\left(\left(1-x^{2}\right)^{d-1}-2\right)^{2}}{\left(1-x^{2}\right)^{d-4}}\Bigg)-2(d-1)\\
&=x\cdot\left(\frac{8(d-4)}{(1-x^2)^{d-3}}+24(1-x^2)^2-2(d+2)(1-x^2)^{d+1}-\frac{2(d-1)}{x}\right)\,.
\end{split}
\end{equation}
Abbreviating $d_{\lbd}\equiv 6.74, d_{\ubd}\equiv 7.5$, and $x_{\lbd}\equiv 3/8, x_{\ubd}\equiv 1/2$, we crudely bound
\[
\begin{split}
&\frac{8(d-4)}{(1-x^2)^{d-3}}+24(1-x^2)^2-2(d+2)(1-x^2)^{d+1}-\frac{2(d-1)}{x}\\
&\geq \frac{8(d_{\lbd}-4)}{(1-x_{\lbd}^2)^{d_{\lbd}-3}}+24(1-x_{\ubd}^2)^2-2(d_{\ubd}+2)(1-x_{\lbd}^2)^{d_{\lbd}+1}-\frac{2(d_{\ubd}-1)}{x_{\lbd}}\,.
\end{split}
\]
The right hand side above is a combination of fractions and powers of the numbers $6.74,7.5,3/8,1/2$, thus can be computed up to arbitrary precision (e.g. by Mathematica), and it can be checked that the right hand side above is greater than $10$. Therefore, combining with \eqref{eq:derivative:L}, this finishes the proof of our claim that $x\to L(d,x)$ is increasing in the regime of interest.

Since $L(d,\cdot)$ is increasing, to prove our goal \eqref{eq:k:3:derivative:goal}, it remains to prove that $L(d,3/8)>0$ for $d\in [6.74, 7.5]$. By a direct calculation, we have
\begin{equation}\label{eq:L}
L\Big(d\,,\,\frac{3}{8}\Big)=\frac{\left(55/64\right)^{3d-2}}{\left(23/32\right)^{d-2}}-\frac{4\left(55/64\right)^{2d-1}}{\left(23/32\right)^{d-2}}+\frac{4(55/64)^{d}}{(23/32)^{d-2}}-\frac{3(d-1)}{4}\,. 
\end{equation}
We next claim that the $d\to L(d, 3/8)$ is convex for any $d>0$. Recalling that $\frac{\de^2 \gamma^d}{\de d^2}=\gamma^d(\log \gamma)^2$ for $\gamma>0$, we can lower bound the second derivative by neglecting the first term in the right hand side:
\[
\frac{\de^2 L}{\de d^2}\Big(d\,,\,\frac{3}{8}\Big)\geq 4\left(\frac{32}{23}\right)^2\left(\frac{(55/64)^{d}}{(23/32)^{d}}\left(\log\left(\frac{55/64}{23/32}\right)\right)^2-\frac{\left(55/64\right)^{2d}}{\left(23/32\right)^{d}}\left(\log\left(\frac{(55/64)^2}{23/32}\right)\right)^2\cdot\frac{64}{55}\right)
\]
It can be numerically verified (e.g. by Mathematica) that $\left(\log\left(\frac{55/64}{23/32}\right)\right)^2>0.01>\left(\log\left(\frac{(55/64)^2}{23/32}\right)\right)^2\cdot\frac{64}{55}$. Further, we have $\frac{(55/64)^{d}}{(23/32)^{d}}>\frac{\left(55/64\right)^{2d}}{\left(23/32\right)^{d}}$. Thus, the inequality above proves our claim that $d\to L(d,3/8)$ is convex for any $d>0$.

Now, since $L(d,3/8)$ in \eqref{eq:L} can be computed to arbitrary precision (e.g. by Mathematica), it can be numerically verified that $L(6.74,3/8)>0.001>0$ while $L(6,3/8)<-0.2<0$ holds. Therefore, $L(d,3/8)>0$ holds for $d>6.74$ since $L(\cdot, 3/8)$ is convex. Since we have shown that $L(d,\cdot)$ is increasing, this concludes the proof our goal \eqref{eq:k:3:derivative:goal}.
\end{proof}

\begin{lemma} For $k=3$ and $d\in [d_{\lbd}(3),d_{\ubd}(3)]\equiv [6.74,7.5]$, $\Psi_d(\frac{1}{2}-\frac{1}{2^3}) >\frac{1}{2}-\frac{1}{2^3}$ holds. 
\label{lem:threeleftendpoint}
\end{lemma}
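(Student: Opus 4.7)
The plan is to reduce the inequality $\Psi_d(3/8) > 3/8$ to a single monotone inequality in $d$, then verify it at the left endpoint $d = d_{\lbd}(3) = 6.74$ by a direct numerical estimate. First I would evaluate the clause recursion $\hat{\Psi}$ at $x_0 = 3/8$: since $k=3$,
\[
v_0 \equiv \hat{\Psi}(3/8) = \frac{1 - 2(3/8)^2}{1 - (3/8)^2} = \frac{46/64}{55/64} = \frac{46}{55}.
\]
Writing out $\Psi_d(3/8) = \dot{\Psi}(v_0) = (1 - v_0^{d-1})/(2 - v_0^{d-1})$ and cross-multiplying (which is valid since $2 - v_0^{d-1} > 0$), a short rearrangement shows that $\Psi_d(3/8) > 3/8$ is equivalent to the clean scalar inequality
\[
v_0^{\,d-1} < \tfrac{2}{5}.
\]

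Next, since $v_0 = 46/55 \in (0,1)$, the map $d \mapsto v_0^{d-1}$ is strictly decreasing on $[d_{\lbd}(3), d_{\ubd}(3)]$. Consequently it suffices to verify the inequality at the left endpoint $d = 6.74$, i.e., to show that $(46/55)^{5.74} < 2/5$. This is the only step that requires a numerical estimate, but it is a comparison between two explicit real numbers that can be carried out to arbitrary precision (e.g.\ by Mathematica), analogously to the numerical verifications used in the proofs of Lemmas~\ref{lem:threederivative} and \ref{lem:leftendpoint} for $k=4$.

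The main (only) obstacle is the tightness of this numerical bound: $(46/55)^{5.74}$ is roughly $0.36$, safely below $0.4$, but the gap is what forces the specific choice $d_{\lbd}(3) = 6.74$ rather than the more natural $6\log 2 \approx 4.16$. I would remark that this is the precise reason Section~\ref{sec:4} must inflate $d_{\lbd}(3)$ away from the value suggested by the second-moment threshold: at smaller $d$, the equivalent inequality $v_0^{d-1} < 2/5$ fails, so the fixed-point equation $\Psi_d(x)=x$ no longer has a root inside $[\tfrac12 - \tfrac{1}{2^3}, \tfrac12]$, and $\boldsymbol{\prescript{\star}{}{\Phi}}(d)$ would not be well-defined. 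Once the numerical check is done, Lemma~\ref{lem:threeleftendpoint} follows immediately.
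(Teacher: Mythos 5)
Your proof is correct and follows exactly the same route as the paper: compute $v_0 = \hat{\Psi}(3/8) = 46/55$, rearrange $\dot{\Psi}(v_0) > 3/8$ to the scalar inequality $v_0^{d-1} < 2/5$ (the paper writes the equivalent $(55/46)^{d-1} > 5/2$), use monotonicity of $d \mapsto v_0^{d-1}$ to reduce to $d=6.74$, and verify numerically. One small correction to your side remark: this particular inequality is not what forces $d_{\lbd}(3) = 6.74$ — since $(46/55)^{5.74} \approx 0.36$ is comfortably below $0.4$, the inequality $v_0^{d-1} < 2/5$ in fact continues to hold down to $d \approx 6.13$; the binding constraints on $d_{\lbd}(3)$ come instead from Lemma~\ref{lem:threederivative} (the contraction estimate, whose key quantity $L(d,3/8)$ changes sign near $6.74$) and Lemma~\ref{lem:threephipositive} ($\boldsymbol{\prescript{\star}{}{\Phi}}(6.74) > 0$).
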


\begin{proof}
By a direct calculation, we have
$\Psi_d(3/8)= \big(1-(46/55)^{d-1}\big)/ \big(2-(46/55)^{d-1}\big)$, thus $\Psi_d(3/8)>3/8$ is equivalent to $\left(\frac{55}{46}\right)^{d-1}>\frac{5}{2}$. Since $d\to \left(\frac{55}{46}\right)^{d-1}$ is increasing, it suffices to check this for $d=6.74$. It can be checked numerically (e.g. by Mathematica) that $\left(\frac{55}{46}\right)^{5.74}>2.7$, thus $\Psi_d(3/8)>3/8$ holds for any $d\in [6.74,7.5]$.
\end{proof}

\begin{proof}[Proof of Lemma \ref{lem:kthreepartone}]
By Lemma \ref{lem:threeleftendpoint}, $\Psi_{d}(\frac{1}{2}-\frac{1}{2^3}) >\frac{1}{2}-\frac{1}{2^3}$ holds. Note that $\Psi_d(1/2)<1/2$ holds since $\dot{\Psi}(x)<1/2$ holds for any $x\geq 0$. Thus, since $x\to \Psi_d(x)$ is continuous and differentiable, intermediate value theorem guarantees the existence of the solution to $\Psi_d(x)=x$ for $x\in [\frac{1}{2}-\frac{1}{2^3},\frac{1}{2}]$. Moreover, $\big|(\Psi_d)^\prime(x)\big|<1$ holds uniformly over $x\in [\frac{1}{2}-\frac{1}{2^3},\frac{1}{2}]$ by Lemma \ref{lem:threederivative}, thus mean value theorem guarantees the uniqueness of the solution to $\Psi_d(x)=x$ for $x\in [\frac{1}{2}-\frac{1}{2^3},\frac{1}{2}]$.
\end{proof}
    \subsection{Proof of Lemma \ref{lem:kthreeparttwo}}
    \label{sec:4.2}
    For $k\geq 4$, we have proven $\bPhi\big(d_{\lbd}(k)\big)>0$ by showing that $\Phi\big(d_{\lbd}(k),x\big)$, defined in \eqref{eq:def:Phi:d:x}, is uniformly positive for $x\in [\frac{1}{2}-\frac{1}{2^k},\frac{1}{2}]$ in Lemma \ref{lem:leftendpoint}. Unfortunately, the same does not hold for $k=3$. That is, it is not true for $k=3$ that $\Phi(d_{\lbd}(3),x)$ is uniformly positive for $x\in [\frac{1}{2}-\frac{1}{2^3},\frac{1}{2}]$. Instead, we prove $\bPhi(6.74)>0$ by proving a refined estimates for $x\equiv x(3,6.74)$, the solution to \eqref{eq:BP}.

    \begin{lemma} For $k=3$, we have $\boldsymbol{\prescript{\star}{}{\Phi}}(6.74)>0$.
    \label{lem:threephipositive}
    \end{lemma}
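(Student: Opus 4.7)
The plan is to numerically localize the \textsc{bp} fixed point $x^*\equiv x(3,6.74)$ to a sufficiently small sub-interval of $[3/8,1/2]$ and then verify $\Phi(6.74,x)>0$ throughout that sub-interval. This replaces the uniform-in-$x$ strategy used for $k\geq 4$ in Lemma \ref{lem:leftendpoint:Phi}, which fails at $k=3$.

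First I would justify a rigorous bisection for locating $x^*$. Set $\psi(x):=\Psi_{6.74}(x)-x$. From Lemma \ref{lem:threederivative} we have $|\Psi_{6.74}'(x)|<1$ on $[3/8,1/2]$, so $\psi'(x)<0$ on this interval; combined with $\psi(3/8)>0$ (Lemma \ref{lem:threeleftendpoint}) and $\psi(1/2)=\dot\Psi\big(\hat\Psi(1/2)\big)-1/2=\dot\Psi(0)-1/2<0$, this gives both uniqueness and strict monotonicity of $\psi$. Hence, for any candidate rational $x_0\in(3/8,1/2)$, a single exact (or sufficiently precise) evaluation of $\Psi_{6.74}(x_0)$ determines whether $x^*<x_0$ or $x^*>x_0$. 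Iterating this bisection a small number of times produces explicit rationals $a<b$ with $x^*\in[a,b]$ and $b-a$ as small as desired.

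Second, I would check $\Phi(6.74,x)>0$ on such an interval $[a,b]$. Since $x\mapsto \Phi(6.74,x)$ is smooth, I would compute $\Phi(6.74,a)$ and $\Phi(6.74,b)$ to arbitrary precision and bound the variation on $[a,b]$ by $(b-a)\cdot \sup_{x\in[a,b]}\big|\tfrac{\partial\Phi}{\partial x}(6.74,x)\big|$; the latter derivative is an explicit rational function of $x$ whose supremum over $[a,b]$ admits an easy crude upper bound from the endpoint values. It then suffices to choose $b-a$ small enough (obtained by continuing the bisection) that this variation is dominated by $\min\{\Phi(6.74,a),\Phi(6.74,b)\}$, yielding $\Phi(6.74,x)>0$ for all $x\in[a,b]$, in particular for $x=x^*$.

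The main obstacle is quantitative: $\Phi(6.74,\cdot)$ is genuinely not positive on all of $[3/8,1/2]$, so the bisection must be carried far enough that the interval $[a,b]$ containing $x^*$ lies within the region where $\Phi(6.74,\cdot)>0$. Because all the functions involved ($\Psi_{6.74}$, $\Phi(6.74,\cdot)$, and its derivative) are elementary expressions in $x$ with rational exponents, every one of these numerical checks is reducible to evaluating explicit rational combinations of powers of fixed rationals and logarithms thereof, which can be done to any desired precision (e.g.\ by Mathematica), exactly as in the numerical verifications in Lemmas \ref{lem:threederivative} and \ref{lem:threeleftendpoint}.
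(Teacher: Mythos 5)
Your proposal is correct and takes essentially the same route as the paper: localize the unique fixed point $x(3,6.74)$ to a small explicit subinterval by sign checks of $\Psi_{6.74}(x)-x$ (the paper simply exhibits $[0.4464,0.45]$ rather than running a bisection), then verify $\Phi(6.74,\cdot)>0$ on that subinterval from endpoint evaluations together with a bound on $\partial_x\Phi$ (the paper shows the derivative is positive and checks only the left endpoint, while you use a Lipschitz-variation bound). One minor slip that does not affect the argument: $\hat{\Psi}_3(1/2)=2/3$, not $0$; the needed inequality $\Psi_{6.74}(1/2)<1/2$ still holds because $\dot{\Psi}(v)<1/2$ for every $v\geq 0$.
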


    \begin{proof}
    By Lemma \ref{lem:kthreepartone}, there exists a unique solution $x_{\circ}=x(3,6.74)$ to $\Psi_d(x)=x$ for $k=3$ and $d=6.74$. By the uniqueness guaranteed by Lemma \ref{lem:threederivative}, if there exists $a,b\in [3/8, 1/2], a<b$ such that $\Psi_d(a)>a$ and $\Psi_d(b)<b$, then $x_{\circ}\in [a,b]$ holds. By taking $a=0.4464$ and $b=0.45$, it can be numerically verified that $\Psi_{6.74}(0.4464)>0.44645$ and $\Psi_{6.74}(0.45)<0.449$ holds, thus we have $x_{\circ}\in [0.4464,0.45]$.

        We now prove that for $k=3$ and $d=6.74$, the function $x\to \Phi(6.74,x)$ is increasing for $x\in [0.44,0.45]$, where $\Phi$ is defined in \eqref{eq:def:Phi:d:x}. By a direct calculation,
    \begin{align*}
    \Phi(6.74,x) =  -\log(1-x)- \frac{262}{75} \log(1-2x^{3})+\frac{287}{50} \log(1-x^{2})\,.
    \end{align*}
    Differentiating in $x$ gives that for $x\in [0.44,0.45]$
    \[
    \begin{split}
    \frac{\partial\Phi}{\partial x}(6.74,x)
    &= \frac{1}{1-x}+\frac{524}{25}\cdot\frac{x^2}{1-2x^3}-\frac{287}{25}\cdot\frac{x}{1-x^2}\\
    &\geq \frac{1}{1-0.44}+\frac{524}{25}\cdot \frac{(0.44)^2}{1-2(0.44)^3}-\frac{287}{25}\cdot\frac{0.45}{1-(0.45)^2}>0.1\,,
    \end{split}
    \]
    thus $x\to \Phi(6.74,x)$ is increasing for $x\in [0.44,0.45]$.

    As a consequence, it follows that $\bPhi(6.74)\geq \inf_{x\in [0.4464,0.45]}\Phi(6.74,x)=\Phi(6.74,0.4464)$ holds. Further, $\Phi(6.74, 0.4464)$ can be calculated up to arbitrary precision (e.g. by Mathematica), and it can be checked that $\Phi(6.74, 0.4464)>4\cdot 10^{-5}>0$, which concludes the proof. 
    \end{proof}
To show that $\bPhi(7.5)>0$ holds for $k=3$, we use a similar strategy as in the proof of Lemma \ref{lem:threephipositive}.
   
\begin{lemma}\label{lem:threephinegative}
For $k=3$, we have $\bPhi(7.5)<0$ holds.
\end{lemma}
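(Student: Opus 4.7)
The plan is to mirror the strategy of Lemma~\ref{lem:threephipositive} with the sign of the final inequality reversed. First, Lemma~\ref{lem:kthreepartone} guarantees a unique fixed point $x_\diamond := x(3, 7.5) \in [3/8, 1/2]$ of $\Psi_{7.5}$, and by the contraction established in Lemma~\ref{lem:threederivative}, any explicit $a < b$ in $[3/8, 1/2]$ with $\Psi_{7.5}(a) > a$ and $\Psi_{7.5}(b) < b$ sandwich $x_\diamond \in [a, b]$. A narrow bracket such as $[a,b] = [0.469, 0.47]$ is expected to succeed: since $x(3, 6.74) \in [0.4464, 0.45]$ was found in the proof of Lemma~\ref{lem:threephipositive} and increasing $d$ to $7.5$ should push the fixed point only slightly closer to $\tfrac{1}{2}$, the inequalities $\Psi_{7.5}(0.469) > 0.469$ and $\Psi_{7.5}(0.47) < 0.47$ should be verifiable by direct numerical evaluation.

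Next, for $k = 3$ and $d = 7.5$ the coefficient reduces to $d(1 - k^{-1} - d^{-1}) = 7.5 - 2.5 - 1 = 4$, giving the explicit formulas
\begin{equation*}
\Phi(7.5, x) = -\log(1-x) - 4 \log(1 - 2x^3) + \tfrac{13}{2} \log(1 - x^2), \qquad \tfrac{\partial \Phi}{\partial x}(7.5, x) = \frac{1}{1-x} + \frac{24 x^2}{1 - 2x^3} - \frac{13 x}{1 - x^2}.
\end{equation*}
On the narrow bracket $[a,b]$, I would lower-bound $\tfrac{\partial \Phi}{\partial x}(7.5, \cdot)$ by evaluating each monotone summand at its respective worst endpoint. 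A back-of-envelope check at $x = 0.47$ gives $\tfrac{\partial \Phi}{\partial x}(7.5, 0.47) \approx 0.73 > 0$, so this endpoint-based lower bound should establish that $\Phi(7.5, \cdot)$ is strictly increasing on $[a,b]$.

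Finally, the monotonicity together with $x_\diamond \le b$ gives $\bPhi(7.5) = \Phi(7.5, x_\diamond) \le \Phi(7.5, b)$, and it remains to verify $\Phi(7.5, b) < 0$ by a precision computation (with $b = 0.47$, one expects $\Phi(7.5, 0.47) \approx -0.05$). The main obstacle, as in Lemma~\ref{lem:threephipositive}, is calibrating $a$ and $b$ narrowly enough that the endpoint value $\Phi(7.5, b)$ is strictly negative, while wide enough that $\Psi_{7.5}(a) > a$ and $\Psi_{7.5}(b) < b$ can both be certified by elementary numerical evaluation; since every quantity involved is an explicit algebraic combination of rationals and logarithms, no conceptual new ingredient is needed beyond the tools already used in Section~\ref{sec:4}.
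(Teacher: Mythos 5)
Your proposal is correct and follows essentially the same route as the paper: bracket the fixed point $x(3,7.5)$ numerically (the paper uses $[0.46,0.48]$, you use the tighter $[0.469,0.47]$, and your bracketing inequalities and the estimate $\Phi(7.5,0.47)\approx-0.057$ check out), show $x\mapsto\Phi(7.5,x)$ is increasing on the bracket by lower-bounding $\frac{\partial\Phi}{\partial x}(7.5,\cdot)$ termwise at the worst endpoints, and conclude $\bPhi(7.5)\leq\Phi(7.5,b)<0$ by direct evaluation. The only difference is the cosmetic choice of bracket, so no further comment is needed.
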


\begin{proof}
    Let $x_{\circ}^\prime\equiv x(3,7.5)$ be the unique solution to $\Psi_d(x)=x$ for $k=3$ and $d=7.5$ (cf. Lemma \ref{lem:kthreepartone}). By taking $a=0.46$ and $b=0.48$, it can be numerically verified that $\Psi_{7.5}(a)<a$ and $\Psi_{7.5}(b)>b$ holds, thus by uniqueness, we have $x_{\circ}^\prime \in [0.46,0.48]$.

    We now prove that for $k=3$ and $d=7.5$, the function $x \to \Phi(7.5,x)$ is increasing for $x\in [0.46, 0.48]$. By definition of $\Phi$ in \eqref{eq:def:Phi:d:x}, we have
\begin{align*}
\boldsymbol{\prescript{\star}{}{\Phi}}(7.5) =  -\log(1-x)- 4 \log(1-2x^{3})+6.5\log(1-x^{2})\,.
\end{align*}
 Differentiating in $x$ gives that for $x\in [0.46,0.48]$
    \[
    \begin{split}
    \frac{\partial\Phi}{\partial x}(6.74,x)
    &= \frac{1}{1-x}+\frac{24x^{2}}{1-2x^{3}}-\frac{13x}{1-x^{2}}= -\frac{2x^{3}-24x^{2}+12x-1}{(x-1)(x+1)(2x^3-1)}\\
    &\geq \frac{1}{1-0.46}+\frac{24\cdot (0.46)^2}{1-2(0.46)^3}-\frac{13\cdot 0.48}{1-(0.48)^2}>0.04>0\,,
    \end{split}
    \]
 thus $x\to \Phi(7.5,x)$ is increasing for $x\in [0.46,0.48]$.

Consequently, it follows that $\bPhi(7.5)\leq \sup_{x\in [0.46,0.48]}\Phi(7.5,x)=\Phi(7.5,0.48)$. Further, $\Phi(7.5,0.48)$ can be calculated up to arbitrary precision (e.g. by Mathematica), and the inequality $\Phi(7.5,0.48)<-0.04<0$ can be checked, which concludes the proof.
\end{proof}
 \begin{proof}[Proof of Lemma \ref{lem:kthreeparttwo}]
     By definition, $\bPhi(d)=\Phi\big(d,x(3,d)\big)$ holds, where $x(3,d)$ is solution to \eqref{eq:BP}. Since $(d,x)\to \Phi(d,x)$ is continuous, to prove the continuity of $\bPhi(\cdot)$, it suffices to show that $d\to x(3,d)$ is continuous. Note that the function $\psi(d,x):=\Psi_d(x)-x$ satisfies $\frac{\partial \psi}{\partial x}<0$ by Lemma \ref{lem:threederivative}. Since $x(3,d)$ is defined to be the root of $\psi(d,\cdot)$, this implies that $d\to x(3,d)$ is continuous by the implicit function theorem. Hence, we conclude that $d\to \bPhi(d)$ is continuous. Since  $\boldsymbol{\prescript{\star}{}\Phi}(d_{\textnormal{lbd}}(3))>0$ holds by Lemma \ref{lem:threephipositive} and $\boldsymbol{\prescript{\star}{}\Phi}(d_{\textnormal{ubd}}(3))<0$ holds by Lemma \ref{lem:threephinegative}, we conclude the proof.
    \end{proof}

    \section*{Acknowledgements}
     \label{sec:7}
We thank the MIT PRIMES program and its organizers Pavel Etingof, Slava Gerovitch, and Tanya Khovanova for making this possible. Y.S. thanks Elchanan Mossel, Allan Sly, and Nike Sun for encouraging feedbacks. Y.S. is supported by Simons-NSF Collaboration on Deep Learning NSF DMS-2031883 and Vannevar Bush Faculty Fellowship award ONR-N00014-20-1-2826.
    \bibliographystyle{amsalpha}

    \bibliography{bibliography}

\end{document}